\documentclass[a4paper,12pt,final]{amsart}
\usepackage[a4paper,margin=30mm]{geometry}

\usepackage[T1]{fontenc}
\usepackage{amssymb,amsmath,amsthm,amscd}
\usepackage{eucal,mathrsfs,dsfont}
\usepackage{lscape,enumerate}

\usepackage{verbatim}
\usepackage[usenames,dvipsnames]{color}
\usepackage[colorlinks=true,urlcolor=blue,linkcolor=blue,citecolor=blue,pagebackref=false]{hyperref}

\setlength\marginparwidth{25mm}

\newcommand{\comp}{\mathds C}

\newcommand{\nat}{\mathds N}

\newcommand{\real}{\mathds R}
\newcommand{\rn}{{{\mathds R}^n}}
\newcommand{\Rm}{{{\mathds R}^m}}
\newcommand{\rd}{{{\mathds R}^d}}
\newcommand{\Ee}{\mathds E}
\newcommand{\Pp}{\mathds P}

\newcommand{\Ff}{\mathcal{F}}

\newcommand{\mcn}{\mathcal{MCN}}
\newcommand{\CBF}{\mathcal{CBF}}
\newcommand{\BF}{\mathcal{BF}}

\newcommand{\Nsf}{\mathsf{N}}
\newcommand{\gauss}{\nu}

\newcommand{\nnorm}[1]{\|#1\|}

\newcommand{\scalar}[1]{\langle #1\rangle}

\renewcommand{\leq}{\leqslant}
\renewcommand{\geq}{\geqslant}
\renewcommand{\Re}{\ensuremath{\operatorname{Re}}}

\newcommand{\supp}{\operatorname{\mathrm{supp}}}

\newtheorem{theorem}{Theorem}
\newtheorem{lemma}[theorem]{Lemma}
\newtheorem{proposition}[theorem]{Proposition}
\newtheorem{corollary}[theorem]{Corollary}
\newtheorem{conjecture}[theorem]{Conjecture}
\newtheorem{problem}[theorem]{Problem}
\theoremstyle{definition}
\newtheorem{remark}[theorem]{Remark}
\newtheorem{definition}[theorem]{Definition}
\newtheorem{example}[theorem]{Example}
\newtheorem*{ack}{Acknowledgement}
\newtheorem*{notation}{Notation}
\numberwithin{theorem}{section}
\numberwithin{equation}{section}

\usepackage{color}

\newcommand{\vica}{\color{blue}}
\newcommand{\rene}{\color{red}}
\newcommand{\normal}{\color{black}}

\begin{document}\allowdisplaybreaks
\title[L\'evy Processes and Geometry]{A geometric interpretation of the transition density of a symmetric L\'evy Process}

\dedicatory{Dedicated to Professor Chen, Mu-Fa and Professor Ma, Zhi-Ming}

\author{N.\ Jacob, V.\ Knopova, S.\ Landwehr \and R.L.\ Schilling}
\thanks{%
    \emph{N.\ Jacob}: Mathematics Department, Swansea University, Singleton Park, Swansea SA2 8PP, UK,
    \href{mailto:n.jacob@swansea.ac.uk}{n.jacob@swansea.ac.uk}}
\thanks{%
    \emph{V.\ Knopova}: V.M.Glushkov Institute of Cybernetics NAS of Ukraine, 03187, Kiev,
    Ukraine, \href{mailto:vicknopova@googlemail.com}{vicknopova@googlemail.com}}
\thanks{%
    \emph{S.\ Landwehr}: Heinrich Heine University D\"{u}sseldorf, German Diabetes Center at the Heinrich Heine University D\"{u}sseldorf, Leibniz Center for Diabetes Research, Institute of Biometrics and Epidemiology, Auf'm Hennekamp 65, 40225 D\"{u}sseldorf, Germany, \href{mailto:sandra.landwehr@ddz.uni-duesseldorf.de}{sandra.landwehr@ddz.uni-duesseldorf.de}}
\thanks{%
    \emph{R.L.\ Schilling}: Institut f\"ur Mathematische Stochastik, Technische
    Universit\"at Dresden, 01062 Dresden, Germany, \href{mailto:rene.schilling@tu-dresden.de}{rene.schilling@tu-dresden.de}}

\maketitle

\begin{abstract}
    We study for a class of symmetric L\'evy processes with state space $\rn$ the transition density $p_t(x)$ in terms of two one-parameter families of metrics, $(d_t)_{t>0}$ and $(\delta_t)_{t>0}$. The first family of metrics describes the diagonal term $p_t(0)$; it is induced by the characteristic exponent $\psi$ of the L\'evy process by $d_t(x,y)=\sqrt{t\psi(x-y)}$. The second and new family of metrics $\delta_t$ relates to $\sqrt{t\psi}$ through the formula
    $$
        \exp\left(-\delta_t^2(x,y)\right)
        = \Ff\left[\frac{e^{-t\psi}}{p_t(0)}\right](x-y)
    $$
    where $\Ff$ denotes the Fourier transform. Thus we obtain the following ``Gaussian'' representation of the transition density: $p_t(x)=p_t(0) e^{-\delta_t^2(x,0)}$ where $p_t(0)$ corresponds to a volume term related to $\sqrt{t\psi}$ and where an ``exponential'' decay is governed by $\delta_t^2$. This gives a complete and new geometric, intrinsic interpretation of $p_t(x)$.

    \smallskip\noindent\emph{MSC 2010: Primary: 60J35. Secondary: 60E07; 60E10; 60G51; 60J45; 47D07; 31E05.}

    \medskip\noindent\emph{Key Words: transition function estimates; L\'evy processes; metric measure spaces; heat kernel bounds; infinitely divisible distributions; self-reciprocal distributions.}
\end{abstract}

\section{Introduction}\label{intro}
We start with a simple example. Let $(C_t)_{t\geq 0}$ be the one-dimensional Cauchy process. Its transition function $p_t$ has a density with respect to Lebesgue measure in $\real$ and we denote the density again by $p_t$,
\begin{equation}\label{intro-e01}
    p_t(x,y) = \frac{t}{\pi\,(t^2 + |x-y|^2)}.
\end{equation}
Since $(C_t)_{t\geq 0}$ has stationary and independent increments, $p_t(x,y)$ depends only on the increment $x-y$, i.e.
\begin{equation*}
    p_t(x) := p_t(x,0) = \frac{t}{\pi\,(t^2 + |x|^2)}.
\end{equation*}
Let us introduce two one-parameter families of metrics, $d_{C,t}(\cdot,\cdot)$, $t>0$, and $\delta_{C,t}(\cdot,\cdot)$, $t>0$, on $\real$ defined by
\begin{equation*}
    d_{C,t}(x,y) := \sqrt{t\,|x-y|}
\end{equation*}
and
\begin{equation*}
    \delta_{C,t}(x,y) := \sqrt{\ln\left[\frac{|x-y|^2+t^2}{t^2}\right]}.
\end{equation*}
With
\begin{equation*}
    B^{d_{C,t}}(0,1)
    := \big\{ x\in\real \::\: d_{C,t}(x,0)<1\big\}
    = \big\{ x\in\real \::\: |x|<1/t\big\}
\end{equation*}
we find
\begin{equation*}
    \lambda\Big(B^{d_{C,t}}(0,1)\Big) = \frac 2t
\end{equation*}
and, therefore,
\begin{equation*}
    p_t(0) = \frac 1{2\pi}\,\lambda\Big(B^{d_{C,t}}(0,1)\Big).
\end{equation*}
Thus, we find
\begin{equation}\label{intro-e08}
    p_t(x) = \frac 1{2\pi}\,\lambda\Big(B^{d_{C,t}}(0,1)\Big)\,e^{-\delta_{C,t}^2(x,0)}.
\end{equation}
Let us compare \eqref{intro-e08} with the Gaussian, i.e.\ with the density $\gauss_t$ of the transition function of a one-dimensional Brownian motion $(B_t)_{t\geq 0}$ where
\begin{equation*}
    \gauss_t(x) = \frac 1{\sqrt{2\pi t}}\,e^{-|x|^2/2t}.
\end{equation*}
We may introduce the following two one-parameter families of metrics on $\real$
\begin{equation*}
    d_{G,t}(x,y) = \sqrt t\,|x-y|
\end{equation*}
and
\begin{equation*}
    \delta_{G,t}(x,y) = \frac 1{\sqrt{2t}}\,|x-y|.
\end{equation*}
For
$$
    B^{d_{G,t}}(0,1)
    := \big\{ x\in\real \::\: d_{G,t}(x,0)<1\big\}
    = \big\{ x\in\real \::\: |x|<1/\sqrt t\,\big\}
$$
it holds
\begin{equation*}
    \lambda\Big(B^{d_{G,t}}(0,1)\Big) = \frac 2{\sqrt t}
\end{equation*}
and, consequently,
\begin{equation*}
    \gauss_t(0) = \frac 1{\sqrt{8\pi}}\,\lambda\Big(B^{d_{G,t}}(0,1)\Big).
\end{equation*}
Hence, we find
\begin{equation}\label{intro-e14}
    \gauss_t(x) = \frac 1{\sqrt{8\pi}}\,\lambda\Big(B^{d_{G,t}}(0,1)\Big)\,e^{-\delta^2_{G,t}(x,0)}.
\end{equation}
Comparing \eqref{intro-e08} and \eqref{intro-e14} we note that $p_t$ and $\gauss_t$ have the same structure.

The central purpose of this paper is to work out that we should expect many symmetric L\'evy processes to have a density which is completely determined by two one-parameter families of metrics by a formula which is completely analogous to \eqref{intro-e08} or \eqref{intro-e14}. We will provide large classes of examples and discuss some consequences.

Throughout this paper $(X_t)_{t\geq 0}$ will be a symmetric L\'evy process with state space $\rn$. It is well known that its characteristic function is given by
\begin{equation*}
    \Ee e^{i\xi\cdot X_t} = e^{-t\psi(\xi)}
\end{equation*}
where $\psi:\rn\to\comp$ is the characteristic exponent. The exponent $\psi$ has a L\'evy-Khintchine representation,
\begin{equation*}
    \psi(\xi)
    = i\ell\cdot\xi + \frac 12 \sum_{j,k=1}^n q_{jk}\xi_j\xi_k + \int_{y\neq 0} \left(1-e^{iy\cdot\xi} + \frac{iy\cdot\xi}{1+|y|^2}\right)\nu(dy)
\end{equation*}
where $\ell\in\rn$, $(q_{jk})_{j,k}\in\real^{n\times n}$ is a symmetric positive semidefinite matrix and $\nu$ is a Borel measure on $\rn\setminus\{0\}$ such that $\int_{y\neq 0} \left(1\wedge |y|^2\right)\,\nu(dy)<\infty$. This means that $\psi$ is a continuous negative definite function in the sense of Schoenberg. If $(X_t)_{t\geq 0}$ is symmetric, then $\psi(\xi)\geq 0$, $\psi(\xi)=\psi(-\xi)$ and $\xi\mapsto\sqrt{\psi(\xi)}$ is subadditive, i.e.
$$
    \sqrt{\psi(\xi+\eta)}\leq\sqrt{\psi(\xi)} + \sqrt{\psi(\eta)}.
$$
Thus, if we require that $\psi(\xi)=0$ if, and only if, $\xi=0$, then $(\xi,\eta)\mapsto \sqrt{\psi(\xi-\eta)}$ generates a metric on $\rn$ and we can understand $\big(\rn,\sqrt{\psi},\lambda\big)$ as a metric measure space.

We will always assume that $e^{-t\psi}\in L^1(\rn)$ which implies that the probability distribution $p_t$ of each $X_t$, $t>0$, has a density with respect to Lebesgue measure $\lambda$; we will denote these densities again by $p_t(x)$.

The first important observation is Theorem \ref{pto-03} which tells us that
\begin{equation}\label{intro-e16}
    p_t(0)
    = (2\pi)^{-n}\int_\rn e^{-t\psi(\xi)}\,d\xi
    = (2\pi)^{-n}\int_0^\infty \lambda\Big(B^{d_\psi}\big(0,\sqrt{r/t}\big)\Big)\,e^{-r}\,dr
\end{equation}
where $B^{d_\psi}(0,r)$ denotes the ball with centre $0$ and radius $r>0$ with respect to the metric $d_\psi(x,y)=\sqrt{\psi(x-y)}$. A first version of this result was already proved in \cite{KS1}. While \eqref{intro-e16} is an exact formula, we get
\begin{equation}\label{intro-e17}
    p_t(0) \asymp \lambda\Big(B^{d_\psi}\big(0, 1/\sqrt t\big)\Big)
\end{equation}
whenever the metric measure space $\big(\rn,d_\psi,\lambda\big)$ has the volume doubling property. (By $f\asymp g$ we mean that there exists a constant $0<\kappa<\infty$ such that $\kappa^{-1} f(x) \leq g(x) \leq \kappa f(x)$ for all $x$.) Introducing the one-parameter family of metrics $d_{\psi,t}(\cdot,\cdot)$ by
\begin{equation*}
    d_{\psi,t}(\xi,\eta)
    := \sqrt{t\,\psi(\xi-\eta)}
\end{equation*}
we find, if \eqref{intro-e17} holds, that
\begin{equation}\label{intro-e19}
    p_t(0) \asymp \lambda\Big(B^{d_{\psi,t}(\cdot,\cdot)}\big(0,1\big)\Big).
\end{equation}
In order to prove \eqref{intro-e17} or \eqref{intro-e19} we first need to understand the metric measure space $\big(\rn,d_\psi,\lambda\big)$. This is done in Section \ref{cndf}. Following some basic definitions we provide conditions for the doubling property to hold and we discuss when $\big(\rn,d_\psi,\lambda\big)$ is a space of homogeneous type. Note that only in case of Brownian motion we can expect that $\big(\rn,d_\psi,\lambda\big)$ is a length space. A more detailed study is devoted to the case of subordination, i.e.\ when $\psi$ is the composition of a Bernstein function $f$ (the Laplace exponent of an increasing L\'evy process) and a continuous negative definite function (characteristic function of a L\'evy process) $\phi$. The most interesting case is $\psi(\xi) = f(|\xi|^2)$. We discuss several examples and these are used to illustrate \eqref{intro-e19}.

In order to understand the behaviour of $p_t(x)$ for $x\neq 0$ we observe that
\begin{equation*}
    \frac{p_t(x)}{p_t(0)}
    = \int_\rn e^{-ix\cdot\xi}\,\frac{e^{-t\psi(\xi)}}{p_t(0)}\,d\xi
\end{equation*}
is the Fourier transform of a probability measure. The question is whether we can write $p_t(x)/p_t(0)$ as
\begin{equation}\label{intro-e21}
    \frac{p_t(x)}{p_t(0)} = e^{-\delta_{\psi,t}^2(x,0)}
\end{equation}
with a suitable one-parameter family of metrics $\delta_{\psi,t}(x,y)$, $t>0$. Section \ref{ptx} explains this idea in more detail and first examples are given. Our approach is not just an `educated guess'. A theorem of Schoenberg---in a formulation suitable for our discussion---states that a metric space $(\rn,d)$ can be isometrically embedded into an (in general infinite-dimensional) Hilbert space $\mathcal H$ if, and only if, $d(x,y)=\sqrt{\psi(x-y)}$ for some suitable continuous negative definite function $\psi:\rn\to\real$. Using the Dirichlet form and the carr\'e du champ associated with the L\'evy process $(X_t)_{t\geq 0}$ we outline the proof of  the fact that the metric space $(\rn,d_\psi)$ can isometrically be embedded into a Hilbert space; this is the part of Schoenberg's result which is important for our considerations.
Our general guide for the investigations in this paper is the rough idea that Fourier transforms of Gaussians are Gaussians---also in Hilbert spaces. Thus, we might consider to obtain $p_t(x)$ or $p_t(x)/p_t(0)$ as pre-image of Fourier transforms of Gaussians in $\mathcal H$. So far, we did not succeed to formalize this idea, however, already during the \emph{3rd Conference on L\'evy processes: Theory and Applications} 2003 in Paris the first-named author launched this idea to use this correspondence to study L\'evy processes picking up some work of P.A.\ Meyer \cite{meyer}.

So far we have only partial answers for \eqref{intro-e21} to hold. In Section~\ref{dist} we begin with the density of a single random variable, i.e.\ we will not take into account that they belong to the transition function of a process. However, we assume that they are infinitely divisible random variables, hence they can always be embedded into the transition function of a L\'evy process. We introduce in Section \ref{dist} the class $\Nsf$ of infinitely divisible probability distributions consisting of those $p$ for which $\mathcal F^{-1}\big[\frac p{p(0)}\big]$ is again infinitely divisible. Thus, if for a L\'evy process $(X_t)_{t\geq 0}$ the density $p_{t_0}$ belongs to $\Nsf$, then $p_{t_0}$ satisfies \eqref{intro-e21}. We give large classes of examples including Fourier self-reciprocal densities, generalized hyperbolic distributions and more.

In Section \ref{sub} and \ref{proc} we return to our investigations on processes.  As so often when dealing with L\'evy processes, subordinate Brownian motion plays a distinguished role. In Section~\ref{sub} we present some results of general nature. We prove in Theorem~\ref{sub-15} (for $n=1$) that if  $\Ff^{-1}[p_t] (\xi)=e^{-tf(|\xi|)}$ for some Bernstein function $f$ such that $f(0)=0$ and $\int_0^\infty e^{-tf(r)}dr<\infty$, then $p_t(x)=p_t(0)e^{-g_t(|x|^2)}$ for a suitable family of Bernstein functions $g_t$; of course, $\sqrt{g_t(x-y)}$ gives a metric on $\real$. Although the theorem is proved only for $n=1$, its proof extends to $n=2$ and $n=3$.

In Section~\ref{proc} we discuss further examples of processes for which \eqref{intro-e21} holds. These examples are processes with transition functions which are certain mixtures of Gaussians. While our examples already indicate the scope of our approach, their proofs depend essentially on the special structures of the underlying transition densities. So far we do not have a proof for our general
\begin{conjecture}
Let $\mcn(\rn)$ denote the continuous negative definite functions that induce a metric on $\rn$ which generates the Euclidean topology. If $\psi\in  \mcn(\rn)$ and $e^{-t\psi}\in L^1(\rn)$, then there exists a one-parameter family of metrics $\delta_t(\cdot,\cdot)$ such that
\begin{equation*}
    p_t(x)=p_t(0)e^{-\delta^2_t(x,0)}
\end{equation*}
holds.
\end{conjecture}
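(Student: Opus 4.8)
\emph{A strategy.} The only conceivable candidate is of course
\[
    \delta_t(x,y) := \sqrt{-\ln\frac{p_t(x-y)}{p_t(0)}},
\]
so the plan is to show, for each fixed $t>0$, that the right-hand side is a well-defined finite number and that $\delta_t$ is a metric on $\rn$. Reflexivity ($\delta_t(x,x)=0$) is trivial, and symmetry is immediate from the symmetry of the process, which forces $p_t(-z)=p_t(z)$. So the work splits into two parts: (i) establishing $0<p_t(x)\le p_t(0)$ for all $x$, with equality on the right precisely at $x=0$; and (ii) the triangle inequality. A pleasant by-product one may hope for along the way is the stronger fact that $x\mapsto\delta_t^2(x,0)$ is continuous negative definite, i.e.\ that $(\rn,\delta_t)$ embeds isometrically into a Hilbert space.

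For (i), the upper bound is easy: by Fourier inversion and symmetry,
\[
    p_t(0)-p_t(x) = (2\pi)^{-n}\int_\rn\big(1-\cos(x\cdot\xi)\big)\,e^{-t\psi(\xi)}\,d\xi\ge 0,
\]
and since $\psi\in\mcn(\rn)$ forces $\psi(\xi)=0$ only at $\xi=0$, the weight $e^{-t\psi}$ is strictly positive everywhere, so the integral vanishes only when $x=0$. Strict positivity of $p_t$ is the more delicate point: I would argue that $p_t$ is continuous (because $e^{-t\psi}\in L^1(\rn)$), that $p_t=p_{t/2}*p_{t/2}$, and that the hypothesis that $d_\psi$ generates the Euclidean topology rules out the support of $p_t$ being concentrated on a proper closed subgroup or half-space, so that $\supp p_t=\rn$ and hence $p_t>0$ everywhere. (If one is content with extended metrics taking the value $+\infty$, this step can be skipped.)

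The heart of the matter is (ii), and here I see two routes. \emph{Route A} proves the stronger conclusion on the way: show that
\[
    \mu_t(d\xi):=\frac{e^{-t\psi(\xi)}}{\int_\rn e^{-t\psi(\eta)}\,d\eta}\,d\xi
\]
is an \emph{infinitely divisible} probability measure on $\rn$. Granting this, $F_t:=-\ln\widehat{\mu_t}=-\ln\big(p_t(\cdot)/p_t(0)\big)$ is continuous negative definite; hence so is $\sqrt{F_t}$ (composition with the Bernstein function $r\mapsto\sqrt r$), $F_t$ vanishes only at $0$ by (i), and Schoenberg's theorem shows that $\delta_t(x,y)=\sqrt{F_t(x-y)}$ is a metric, indeed a Hilbert-space metric. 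This is exactly the assertion that $p_t$ lies in the class $\Nsf$ of Section~\ref{dist} for \emph{every} $t>0$; it is already known for Brownian motion, for the Cauchy process (there $\mu_t$ is a Laplace distribution), for subordinate Brownian motion in dimensions $n\le 3$ by (the proof of) Theorem~\ref{sub-15}, and for the Fourier self-reciprocal and generalized hyperbolic families and the Gaussian mixtures treated later. \emph{Route B} attacks (ii) directly: after squaring, and using translation invariance to write $\delta_t(x+y,y)=\delta_t(x,0)$, the triangle inequality is equivalent to the elementary-looking estimate
\[
    \ln\frac{p_t(x)\,p_t(y)}{p_t(0)\,p_t(x+y)}\;\le\;2\sqrt{\,\ln\frac{p_t(0)}{p_t(x)}\,\ln\frac{p_t(0)}{p_t(y)}\,},
\]
read as trivially true whenever the left-hand side is $\le 0$. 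This bypasses infinite divisibility, but needs good quantitative control of $-\ln p_t$.

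The main obstacle is that neither route is known to close in general, which is precisely why the statement remains a conjecture. For Route~A there is no general principle forcing the renormalisation of the characteristic function $e^{-t\psi}$ into a probability density to be infinitely divisible; the class $\Nsf$ is strictly smaller than $\mcn(\rn)$, and I am not aware of any description of $\{\psi:\mu_t\text{ is infinitely divisible for all }t>0\}$. For Route~B, $-\ln p_t$ has no closed form in general and no log-concavity- or correlation-type inequality is available to produce the displayed bound outside of structured situations (subordination, mixtures of Gaussians, and the like). A possible intermediate strategy, in the spirit of Section~\ref{ptx}, is to use Schoenberg's isometric embedding of $(\rn,d_\psi)$ into a Hilbert space $\mathcal H$ and to realise $p_t(x)/p_t(0)$ as the Fourier transform, evaluated at the image of $x$, of a suitable (cylindrical) Gaussian measure on $\mathcal H$ --- formalising the heuristic that the Fourier transform of a Gaussian is a Gaussian --- from which negative definiteness of $\delta_t^2$ would be automatic. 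Making this rigorous, namely pinning down the correct Gaussian measure on $\mathcal H$ and controlling the infinite-dimensional Fourier transform, is where I expect the real difficulty to lie.
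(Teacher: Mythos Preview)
The statement is presented in the paper as a \emph{Conjecture}, with the explicit disclaimer ``So far we do not have a proof for our general Conjecture''; there is no proof in the paper to compare your attempt against. Your write-up correctly recognises this and is an honest survey of the natural approaches together with where they stall, which is in line with the paper's own discussion in Sections~\ref{ptx}--\ref{proc}.

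One point deserves sharpening. You describe Route~A (showing that the normalised measure $\mu_t(d\xi)=e^{-t\psi(\xi)}\,d\xi\big/\!\int e^{-t\psi}$ is infinitely divisible, i.e.\ $p_t\in\Nsf$ for all $t$) as lacking a ``general principle'', and you note that $\Nsf$ does not exhaust $\mcn(\rn)$. The paper is more definitive: Lemma~\ref{dist-21} and Corollary~\ref{dist-23} prove that Route~A \emph{fails outright} whenever $\psi(\xi)$ dominates a radial function growing faster than $r\ln r$ --- in particular for every rotationally symmetric $\alpha$-stable process with $1<\alpha<2$, and for any $\psi\in\mcn(\rn)$ with a nontrivial Gaussian part. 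So the remark immediately following the conjecture, that one is ``looking for a metric $\delta_t(\cdot,\cdot)$, and we do not require that it is of the type $\sqrt{\psi_t(x-y)}$'', is not a convenience but a genuine necessity: for these $\psi$ any metric realising the conjecture \emph{cannot} come from a negative definite function, and your Route~B (or the Hilbert-space heuristic you sketch at the end, which is exactly the programme the paper outlines after Theorem~\ref{ptx-06}) is the only possible avenue for the full statement. Your assessment that the difficulty lies in making the infinite-dimensional Gaussian picture rigorous agrees with the paper's.
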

We emphasize that we are looking for a metric $\delta_t(\cdot,\cdot)$, and we do not require that it is of the type $\sqrt{\psi_t(x-y)}$ where $\psi_t(\xi)$ is a family of continuous negative definite functions indexed by $t>0$. An interesting remark was made by Rama Cont, namely to investigate whether the metric $\delta_t(\cdot,\cdot)$ can be related to a good rate function for large deviations as it is the case for diffusions.

In the final Section~\ref{fel} we give a brief outline of the situation when the L\'evy process is replaced by a Feller process generated by a pseudo-differential operator with a negative definite symbol, compare \cite{H95}--\cite{H98b} and \cite{J94} as well as  \cite{J2} for large classes of examples. We will have to work with metrics varying with the current position in space as it is the case in (sub-)Riemannian geometry. However, the fact that we cannot expect to work in length spaces causes serious problems when we try to understand the underlying geometry.

We would like to mention more recent work in which estimates for heat kernels are obtained when starting with a metric measure space having the volume doubling property: M.\ Barlow, A.\ Grigor'yan and T.\ Kumagai \cite{bar-gri-kum}, Z.-Q.\ Chen and T.\ Kumagai \cite{che-kum,che-kum08}, A.\ Grigor'yan and J.\ Hu \cite{gri-hu}, and A.\ Grigor'yan, J.\ Hu and K.-S.\ Lau \cite{gri-hu-lau,gri-hu-lau10}, to mention some of this work. Note the difference to our point of view. The metric measure space considered by us is induced by the characteristic exponent \emph{on the Fourier space}, not on the state space. Our conjecture, here proved for many classes of processes, is that we can also find a \emph{new metric on the state space} which will yield a Gaussian estimate when combined with the metric induced by the characteristic exponent which gives the diagonal term. For L\'evy and L\'evy-type processes this seems to be the natural approach.

\begin{notation}
In general, we follow our monographs \cite{J}--\cite{J3} and \cite{SSV}. In particular, we use $\Ff u(\xi) = \widehat u(\xi) = (2\pi)^{-n}\int_\rn e^{i\xi x} u(x)\,dx$ for the Fourier transform and we write $\mathscr S(\rn)$ for the Schwartz space. By $K_\lambda$ we denote the Bessel functions of the third kind, cf.\ \cite[vol.\ 2]{erd-et-al}. We write $X\sim Y$ if two random variables $X$ and $Y$ have the same probability distribution and $X\sim \mu$ means that $X$ has the probability distribution $\mu$. If $f$ and $g$ are functions, $f\asymp g$  means that  there exists a constant $\kappa$ such that $\kappa^{-1} f(x)\leq g(x)\leq\kappa \,f(x)$ holds for all $x$, and $f\approx g$, $x\to a$, stands for $\lim_{x\to a} f(x)/g(x) = 1$. All other notations are standard or explained in the text.
\end{notation}

\medskip
We dedicate this paper to Professor Mu-Fa Chen and Professor Zhi-Ming Ma in appreciation of their outstanding contributions to mathematics and their remarkable success of building up in China one of the world's finest centres in probability theory.

\begin{ack}
The authors would like to thank Bj\"orn B\"ottcher and Walter Hoh for comments made while working on this paper.
\end{ack}

\section{Auxiliary results}\label{aux}

\subsection*{Fourier transforms and characteristic functions}
The \emph{Fourier transform} of a bounded Borel measure $\mu$ on $\rn$ is defined by
\begin{equation}\label{aux-e02}
    \Ff\mu(\xi) = (2\pi)^{-n}\int_\rn e^{-ix\xi}\,\mu(dx),\quad\xi\in\rn.
\end{equation}
By \emph{Bochner's theorem} the Fourier transform is a bijective and  bi-continuous mapping from the cone of bounded Borel measures (equipped with the weak topology) to the cone of continuous positive definite functions (equipped with the topology of locally uniform convergence). By linearity we can extend \eqref{aux-e02} to signed measures; for $u\in\mathscr S(\rn)$ we get the classical formulae for the (inverse) Fourier transform
\begin{equation*}
    \Ff u(\xi)
    = (2\pi)^{-n}\int_\rn e^{-ix\xi} u(x)\,dx
    \quad\text{and}\quad
    \Ff^{-1}v(\eta) = \int_\rn e^{iy\eta} v(y)\,dy.
\end{equation*}
Obviously, $\Ff^{-1}$ extends canonically to the bounded Borel measures
\begin{equation*}
    \Ff^{-1}\mu(\eta) = \int_\rn e^{iy\eta}\,\mu(dy),\quad \eta\in\rn.
\end{equation*}
If $\mu$ is the probability law of a random variable $Y$, $\Ff^{-1} \mu(\eta)$ is the \emph{characteristic function} $\chi_Y(\eta) = \Ee e^{i\eta Y}$.

With our normalization of the Fourier transform \emph{Plancherel's theorem} becomes
\begin{equation*}
    \nnorm u_{L^2}
    = (2\pi)^n \nnorm{\Ff u}_{L^2}
    \quad\text{and}\quad
    \int_{\rn} u(x)\,\mu(dx)
    = (2\pi)^n \int_{\rn} \Ff u(\xi) \, \overline{\Ff\mu(\xi)}\,d\xi.
\end{equation*}
Whenever convolution and Fourier transforms of $u$ and $v$ are defined, the \emph{convolution theorem} holds, i.e.\
\begin{equation*}
    \Ff^{-1}(u\star v) = \Ff^{-1} u \cdot \Ff^{-1} v
    \quad\text{and}\quad
    \Ff(u\cdot v) = \Ff u\star\Ff v.
\end{equation*}

\subsection*{Infinite divisibility}
A probability measure $\mu$ is called an \emph{infinitely divisible probability distribution} if for every $n>0$ there exists some probability measure $\mu_n$ such that $\mu = \mu_n^{\star n} = \mu_n \star\ldots\star\mu_n$ ($n$ factors). Let $X$ be a random variable with law $X$. Then the following statements are equivalent to saying that $\mu$ is infinitely divisible:
\begin{enumerate}[\quad\upshape 1)]
\item
    the random variable satisfying $X\sim\mu$ is an \emph{infinitely divisible random variable}, i.e.\ for every $n\in\nat$ there exist independent and identically distributed random variables $X_1, \ldots, X_n$ such that $X\sim X_1+\cdots+X_n$.
\item
    the characteristic function $\chi_X(\xi) = \Ee e^{i\xi X} = \Ff^{-1}\mu(\xi)$ of the random variable $X$ is an \emph{infinitely divisible characteristic function}, i.e.\ for every $t>0$ the function $(\chi_X)^t$ is again the characteristic function of some random variable.
\end{enumerate}
It is a classical result that $X$ or $\mu$ are infinitely divisible if, and only if, the log-characteristic function $\psi(\xi) := -\ln\chi_X(\xi) = -\ln\Ff^{-1}\mu(\xi)$ is a \emph{continuous negative definite function} (in the sense of Schoenberg). These functions have a unique L\'evy-Khintchine representation, i.e.\
\begin{equation}\label{aux-e13}
    \psi(\xi)
    = i\ell\cdot\xi + \frac 12 \sum_{j,k=1}^n q_{jk}\xi_j\xi_k + \int_{y\neq 0} \left(1-e^{iy\cdot\xi} + \frac{iy\cdot\xi}{1+|y|^2}\right)\nu(dy)
\end{equation}
where $\ell\in\rn$, $(q_{jk})_{j,k}\in\real^{n\times n}$ is a symmetric positive semidefinite matrix and $\nu$ is a Borel measure on $\rn\setminus\{0\}$ such that $\int_{y\neq 0} \left(1\wedge |y|^2\right)\,\nu(dy)<\infty$.

\subsection*{Convolution semigroups}
A \emph{convolution semigroup} $(\mu_t)_{t\geq 0}$ on $\rn$ is a family of probability measures $\mu_t$ defined on $\rn$ satisfying
$$
    \mu_s \star \mu_t = \mu_{s+t},\; s,t\geq 0,
    \quad\text{and}\quad
    \mu_0 = \delta_0.
$$
We will always assume that $(\mu_t)_{t\geq 0}$ is \emph{vaguely continuous}, i.e.\
$$
    \lim_{t\to 0} \int_\rn u(x)\,\mu_t(dx) = u(0)
    \quad\text{for all}\quad u\in C_c(\rn).
$$
It follows from the definition that each measure $\mu_t$, $t>0$, is infinitely divisible. Therefore every (vaguely continuous) convolution semigroup $(\mu_t)_{t\geq 0}$ on $\rn$ is uniquely characterized by a continuous negative definite function $\psi$ such that
\begin{equation}\label{aux-e14}
    \Ff^{-1}\mu_t(\xi) = e^{-t\psi(\xi)},\quad t\geq 0,\;\xi\in\rn.
\end{equation}
Conversely, every continuous negative definite function $\psi$ with $\psi(0)=0$ determines, by \eqref{aux-e14}, a unique (vaguely continuous) convolution semigroup $(\mu_t)_{t\geq 0}$ on $\rn$.

If $\mu$ is an infinitely divisible probability distribution, there is a unique vaguely continuous convolution semigroup $(\mu_t)_{t\geq 0}$ such that $\mu_1 = \mu$; indeed, $\Ff^{-1}\mu_t = (\Ff^{-1}\mu)^t$. Conversely, if $\mu_1$ is an element of $(\mu_t)_{t\geq 0}$, then $\mu_1$ is infinitely divisible.

\subsection*{Subordination}
Subordination in the sense of Bochner is a method to obtain new convolution semigroups from a given one. Let $(\eta_t)_{t\geq 0}$ be a convolution semigroup on $\real$ where all measures $\eta_t$, $t\geq 0$, are supported in $[0,\infty)$. Since $\supp\eta_t\subset[0,\infty)$ it is more convenient to describe $\eta_t$ in terms of the (one-sided) Laplace transform. Similar to \eqref{aux-e14} we see that
\begin{equation}\label{aux-e18}
    \mathcal L\eta_r(\lambda) = \int_{[0,\infty)} e^{-\lambda t}\,\eta_r(t) = e^{-rf(\lambda)}.
\end{equation}
The characteristic (Laplace) exponent $f$ is a \emph{Bernstein function}, i.e.\ $f\in C^\infty(0,\infty)$ such that $f\geq 0$ and $(-1)^{k-1} f^{(k)}\geq 0$ for all $k\geq 1$. All Bernstein functions have a unique representation
\begin{equation}\label{aux-e20}
    f(\lambda) = a + b\lambda + \int_{(0,\infty)} (1-e^{-\lambda t})\,\gamma(dt)
\end{equation}
where $a,b\geq 0$ and $\gamma$ is  a Borel measure on $(0,\infty)$ satisfying $\int_{(0,\infty)} \left(1\wedge t\right)\,\gamma(dt) < \infty$. The triplet $(a,b,\gamma)$, the Bernstein function $f$ and the one-sided convolution semigroup $(\eta_t)_{t\geq 0}$ are, because of \eqref{aux-e18} and \eqref{aux-e20}, in one-to-one correspondence.

Let $(\mu_t)_{t\geq 0}$ and $(\eta_t)_{t\geq 0}$, $\supp\eta_t\subset [0,\infty)$, be convolution semigroups on $\rn$ and $\real$, respectively. Then the following integrals (convergence in the vague topology)
\begin{equation}\label{aux-e22}
    \mu_t^f := \int_{[0,\infty)} \mu_s\,\eta_t(ds),\quad t\geq 0,
\end{equation}
define a new convolution semigroup on $\rn$, $(\mu_t^f)_{t\geq 0}$, which is called the \emph{subordinate semigroup}. The characteristic function of the sub-probability measure $\mu_t^f$ is given by
\begin{equation*}
    \Ff^{-1}\mu_t^f(\xi) = e^{-tf(\psi(\xi))}.
\end{equation*}
In fact, $f\circ\psi$ is, for every Bernstein function $f$, again a continuous negative definite function. Note that the Bernstein functions are the only functions that operate on the continuous negative definite functions in the sense that $f\circ\psi$ is continuous negative definite whenever $\psi$ is, cf.\ \cite{J1}.

\subsection*{Mixtures}
The probability measure $\mu_t^f$ defined in the formula \eqref{aux-e22} may be understood as a \emph{mixture} of the probability measures $(\mu_s)_{s\geq 0}$ under the mixing probability measure $\eta_t(ds)$. More generally, let $(\pi(s;\cdot))_{s\in\real}$ be a family of probability measures on $\rn$ and assume that $\rho$ is a probability measure on the parameter space $\real$. Then
\begin{equation}\label{aux-e26}
    \pi^\rho(B) := \int_\real \pi(s;B) \rho(ds),\quad B\subset\rn\text{\ \ Borel},
\end{equation}
is again a probability measure on $\rn$.

\bigskip
Our standard references for the Fourier transform are the monographs
 \cite{J1} and Berg--Forst \cite{BF}; for Bernstein functions and related topics we refer to \cite{SSV}.
  Basic notions from probability theory can be found in Breiman \cite{B}, mixtures of
  probability measures are discussed in Sato \cite{Sato} and in Steutel--van Harn \cite{SH04}. 

\section{Metric measure spaces and negative definite functions}\label{cndf}

Recall that a \emph{metric measure space} is a triple $(X,d,\mu)$ where $(X,d)$ is a metric space and $\mu$ is a measure on the Borel sets of the space $X$. A good introduction to the analysis on metric measure spaces is the book by Heinonen \cite{H}.

We are mainly interested in metric measure spaces whose metric is induced by a negative definite function. Our basic reference for negative definite functions and their properties is \cite{J1}. Let $\psi:\rn\to\comp$ be a locally bounded negative definite function. Then
\begin{equation*}
    |\psi(\xi)| \leq c_\psi (1+|\xi|^2)
    \quad\text{with}\quad
    c_\psi = 2\sup_{|\eta|\leq 1}|\psi(\eta)|
\end{equation*}
and
\begin{equation}\label{cndf-e04}
    \sqrt{|\psi(\xi+\eta)|}
    \leq \sqrt{|\psi(\xi)|} + \sqrt{|\psi(\eta)|}.
\end{equation}
In particular, whenever $\psi(\xi_0)=0$ for some $\xi_0\neq 0$, $\psi$ is periodic with period $\xi_0$.

Since $\psi(-\xi) = \overline{\psi(\xi)}$, the map $\xi\mapsto |\psi(\xi)|$ is even; in view of \eqref{cndf-e04} it is easy to see that every locally bounded, non-periodic negative definite function with $\psi(0)=0$ induces a metric on $\rn$ by
\begin{equation*}
    d_\psi:\rn\times\rn \to [0,\infty),\quad d_\psi(\xi,\eta) := \sqrt{|\psi(\xi-\eta)|}.
\end{equation*}
The metric $d_\psi$ is invariant under translations, i.e.
$$
    d_\psi(\xi+\zeta,\eta+\zeta) = d_\psi(\xi,\eta).
$$
Therefore Lebesgue measure $\lambda$ is the canonical choice if we consider $(\rn,d_\psi,\lambda)$ as a metric measure space.

We denote by
\begin{equation*}
\begin{aligned}
    B^{d_\psi}(\xi,r)
    &:= \big\{\eta\in\rn \::\: d_\psi(\xi,\eta) < r \big\}
     = \big\{\eta\in\rn \::\: |\psi(\xi-\eta)| < r^2 \big\}\\
    K^{d_\psi}(\xi,r)
    &:= \big\{\eta\in\rn \::\: d_\psi(\xi,\eta) \leq r \big\}
     = \big\{\eta\in\rn \::\: |\psi(\xi-\eta)| \leq r^2 \big\}
\end{aligned}
\end{equation*}
the open and closed balls with radius $r>0$ and centre $\xi$ in the metric space $(\rn,d_\psi)$. Note that $B^{d_\psi}(\xi,r) = \xi + B^{d_\psi}(0,r)$. In general, $B^{d_\psi} \subsetneqq \overline{B^{d_\psi}} \subsetneqq K^{d_\psi}$. A typical counterexample can be constructed using continuous negative definite functions of P\'olya-type. For example, if we set  $\phi(\xi):=|\xi|\wedge 1$, $\xi\in\real$, then $B^{d_\phi}(0,1) = (-1,1)$, $\overline{B^{d_\phi}}(0,1) = [-1,1]$ and $K^{d_\phi}(0,1)=\real$.

In order to compare the metric $d_\psi$ with the usual Euclidean metric we define
\begin{equation}\label{cndf-e10}\begin{aligned}
        m(r) &:= \inf\left\{ |\eta|  \::\: \sqrt{|\psi(\eta)|} = r\right\},
        \\
        M(r) &:= \sup\left\{ |\eta|  \::\: \sqrt{|\psi(\eta)|} = r\right\}.
\end{aligned}\end{equation}
Clearly, $0\leq m(r)\leq M(r)\leq\infty$ are the maximal resp.\ minimal radii of Euclidean balls such that
$$
    B(\xi,m(r)) \subset B^{d_\psi}(\xi,r) \subset B(\xi,M(r))
$$
holds.
\begin{lemma}\label{cndf-03}
    Let $\psi:\rn\to\comp$ be a non-periodic continuous negative definite function with $\psi(0)=0$. Then
    $$
        0 < m(r) \leq M(r) < \infty
        \quad\text{for all}\quad
        0 < r < \liminf_{|\xi|\to\infty} \sqrt{|\psi(\xi)|}.
    $$
    Moreover $m$ and $M$ are monotonically increasing, and the following assertions are equivalent:
    \begin{enumerate}[\qquad\upshape a)]
        \item $M(2r)/m(r) \leq c_2$ for all $r>0$;
        \item $M(\gamma r)/m(r) \leq c_\gamma$ for all $r> 0$ and all $\gamma > 1$;
        \item $M(\delta r)/m(r) \leq c_\delta$ for all $r>0$ and some $\delta > 1$.
    \end{enumerate}
\end{lemma}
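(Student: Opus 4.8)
The plan is to isolate the two features of the continuous function $g(\xi):=\sqrt{|\psi(\xi)|}$ that drive everything: $g$ is continuous with $g(0)=0$, and for $0<r<L$, where $L:=\liminf_{|\xi|\to\infty}g(\xi)$, there is $R_r<\infty$ with $g(\xi)>r$ for all $|\xi|\geq R_r$. (Under the standing assumption $e^{-t\psi}\in L^1(\rn)$ one automatically has $g(\xi)\to\infty$, so $L=\infty$ and all statements hold for every $r>0$; if $L<\infty$ we read them on $0<r<L$.) For the first assertion: continuity and $g(0)=0$ give $\rho>0$ with $g<r$ on the Euclidean ball $B(0,\rho)$, so the level set $L_r:=\{\eta:g(\eta)=r\}$ misses $B(0,\rho)$ and $m(r)\geq\rho>0$; moreover $L_r\subseteq\overline{B(0,R_r)}$, whence $M(r)\leq R_r<\infty$; and $L_r\neq\emptyset$ because for $\xi^{\ast}$ with $|\xi^{\ast}|=R_r$ the map $t\mapsto g(t\xi^{\ast})$ on $[0,1]$ runs from $0$ to $g(\xi^{\ast})>r$, hence equals $r$ somewhere by the intermediate value theorem. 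This gives $0<m(r)\leq M(r)<\infty$.

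For monotonicity the key step is the pair of identities, valid for $0<r<L$,
\[
    m(r)=\inf\{|\eta|:g(\eta)\geq r\},\qquad M(r)=\sup\{|\eta|:g(\eta)\leq r\}.
\]
To prove the first: if $g(\eta)\geq r$ then $t\mapsto g(t\eta)$ runs from $0$ to $g(\eta)\geq r$, so $g(t_0\eta)=r$ for some $t_0\in(0,1]$, giving $m(r)\leq t_0|\eta|\leq|\eta|$; taking the infimum over such $\eta$ yields $m(r)\leq\inf\{|\eta|:g(\eta)\geq r\}$, while the reverse inequality is trivial since $\{g=r\}\subseteq\{g\geq r\}$. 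For the second: if $g(\eta)\leq r$, follow the ray $s\mapsto g(s\eta/|\eta|)$ outward for $s\geq|\eta|$; by the eventual bound $g>r$ at large argument this exceeds $r$, so $g=r$ at some $\eta''$ on the ray with $|\eta''|\geq|\eta|$, whence $M(r)\geq\sup\{|\eta|:g(\eta)\leq r\}$, and again the reverse is trivial. Monotonicity follows at once: for $r_1\leq r_2$ we have $\{g\geq r_2\}\subseteq\{g\geq r_1\}$ and $\{g\leq r_1\}\subseteq\{g\leq r_2\}$, so $m(r_1)\leq m(r_2)$ and $M(r_1)\leq M(r_2)$.

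For the equivalences, b) $\Rightarrow$ a) (take $\gamma=2$) and b) $\Rightarrow$ c) (take $\delta=2$) are immediate, so it remains to prove a) $\Rightarrow$ b) and c) $\Rightarrow$ a), both by the standard doubling iteration. Writing a) as $M(2s)\leq c_2\,m(s)$ for all $s>0$ and combining it with monotonicity of $M$ and the trivial bound $m(s)\leq M(s)$, induction gives $M(2^k r)\leq c_2\,m(2^{k-1}r)\leq c_2\,M(2^{k-1}r)\leq\cdots\leq c_2^{\,k}\,m(r)$ for all $k\geq 1$; then for $\gamma>1$ pick $k$ with $2^k\geq\gamma$, so $M(\gamma r)\leq M(2^k r)\leq c_2^{\,k}\,m(r)$, which is b) with $c_\gamma:=c_2^{\,k}$. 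The implication c) $\Rightarrow$ a) is the same computation with $\delta$ in place of $2$: one gets $M(\delta^k r)\leq c_\delta^{\,k}\,m(r)$, and choosing $k$ with $\delta^k\geq 2$ gives $M(2r)\leq c_\delta^{\,k}\,m(r)$, i.e. a). Hence a) $\Leftrightarrow$ b) $\Leftrightarrow$ c).

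I expect the only genuinely delicate point to be the two identities for $m$ and $M$ — especially making the ``run out to infinity'' argument for $M(r)$ rest squarely on the hypothesis $r<L$, and disposing cleanly of the degenerate range $r\geq L$ (where $M$ may be set to $+\infty$ and the estimates a)--c) carry content only when $L=\infty$). The remaining ingredients — the intermediate value theorem, the nesting of super- and sub-level sets, and the doubling iteration — are routine.
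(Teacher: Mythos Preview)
Your proof is correct and follows the same overall architecture as the paper's: continuity and the intermediate value theorem for the first assertion, then monotonicity, then a doubling iteration for the equivalences. The one genuine difference is the monotonicity step. The paper argues directly: for $r<R$ it picks $\xi_R$ realizing $m(R)=|\xi_R|$, applies the intermediate value theorem to $t\mapsto |\psi(t\xi_R)|$ on $[0,1]$ to find $\theta\in(0,1)$ with $|\psi(\theta\xi_R)|=r^2$, and concludes $m(r)\leq|\theta\xi_R|<m(R)$ (and similarly for $M$). You instead establish the identities $m(r)=\inf\{|\eta|:g(\eta)\geq r\}$ and $M(r)=\sup\{|\eta|:g(\eta)\leq r\}$, after which monotonicity is immediate from the nesting of super- and sub-level sets. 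Your route is slightly longer but buys more: those identities make transparent that $m(r)$ and $M(r)$ are the inner and outer radii of the sub-level set $\{g\leq r\}$, which is exactly how they are used elsewhere in the paper (e.g.\ in the proof of Proposition~\ref{cndf-15}). For a)--c) the two proofs are essentially identical; the paper writes $\gamma=2^k\gamma_0$ with $1\leq\gamma_0<2$ and telescopes the product $\prod_j M(2^j\gamma_0 r)/M(2^{j-1}\gamma_0 r)$, while you pick $k$ with $2^k\geq\gamma$ and invoke monotonicity of $M$---the same iteration in slightly different dress. Your closing remark about the degenerate range $r\geq L$ is well taken: the paper leaves this implicit, and indeed a)--c) as stated (``for all $r>0$'') force $L=\infty$.
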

\begin{proof}
    Since $\psi$ is non-periodic and continuous, $0 < r < \liminf_{|\xi|\to\infty} \sqrt{|\psi(\xi)|}$ implies that the level sets $\left\{\xi\::\: |\psi(\xi)| = r^2\right\}$ are non-empty and compact (in the Euclidean topology). Therefore $0<m(r)\leq M(r)<\infty$.

    Now let $0<r<R<\liminf_{|\xi|\to\infty} |\psi(\xi)|$ and pick some $\xi_R\in\rn$ such that $|\psi(\xi_R)| = R^2$ and $m(R)=|\xi_R|$. Consider the curve $\gamma(t):=|\psi(t\xi_R)|$, $0\leq t\leq 1$. By assumption, $t\mapsto\gamma(t)$ is continuous, $\gamma(0)=0$ and $\gamma(1) = R^2$. Therefore, there exists some $\theta=\theta_r\in (0,1)$ such that $\gamma(\theta) = r^2$. Since $|\theta\xi_R|<|\xi_R|$ and $|\psi(\theta\xi_R)|=r^2$, we conclude that
    \begin{gather*}
        m(r) \leq |\theta\xi_R|<m(R).
    \end{gather*}
    The proof that $M$ is monotone is similar.

    Let us now turn to the assertions a)--c). Clearly, b) implies a). Since $r\mapsto M(r)$ is increasing, it is enough to show that a) entails b) for $\gamma > 2$. If $\gamma > 2$ we can uniquely write it in the form $\gamma = 2^k \gamma_0$ where $k\in\nat$ and $1 \leq \gamma_0< 2$. Thus,
    $$
        \frac{M(\gamma r)}{m(r)}
        = \prod_{j=1}^k \frac{M(2^j\gamma_0 r)}{M(2^{j-1}\gamma_0 r)} \frac{M(\gamma_0 r)}{m(r)}
        \leq \prod_{j=1}^k \frac{M(2^j\gamma_0 r)}{m(2^{j-1}\gamma_0 r)} \frac{M(2 r)}{m(r)}
        \leq c_2^{k+1}.
    $$
    The direction b)$\Rightarrow$c) is obvious; the converse follows as in the case where $\delta=2$.
\end{proof}

\begin{lemma}\label{cndf-05}
    Let $\psi:\rn\to\comp$ be a continuous negative definite function. Then the closed ball $K^{d_\psi}(0,r)$, $r>0$, is bounded in the Euclidean topology if, and only if, $r^2 < \liminf_{|\xi|\to\infty} |\psi(\xi)|$. Moreover, $d_\psi$ generates on $\rn$  the Euclidean topology if, and only if, $\liminf_{|\xi|\to\infty} |\psi(\xi)|>0$.
\end{lemma}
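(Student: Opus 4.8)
The plan is to read off both assertions from the behaviour of $|\psi|$ near infinity, using the comparison of $d_\psi$ with the Euclidean metric encoded in the quantities $m,M$ of \eqref{cndf-e10} and in Lemma~\ref{cndf-03}. Throughout write $L:=\liminf_{|\xi|\to\infty}|\psi(\xi)|\in[0,\infty]$ and recall $K^{d_\psi}(0,r)=\{\xi\in\rn:|\psi(\xi)|\le r^2\}$. I would first dispose of the periodic case: if $\psi(\xi_0)=0$ for some $\xi_0\neq0$, then $\integer\xi_0\subseteq K^{d_\psi}(0,r)$ for every $r>0$, so no closed ball is Euclidean-bounded, while $L=0$, so $r^2<L$ never holds; moreover $d_\psi$ is then only a pseudometric whose (non-Hausdorff) topology is not the Euclidean one, and again $L=0$; hence both equivalences hold trivially. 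So I may assume $\psi$ non-periodic, whence $d_\psi$ is a genuine translation invariant metric.

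For the first statement, the two non-critical cases are immediate from the definition of the limes inferior: if $r^2<L$ there is $R>0$ with $|\psi(\xi)|>r^2$ for all $|\xi|>R$, so $K^{d_\psi}(0,r)\subseteq\overline{B(0,R)}$ is bounded; if $r^2>L$, a sequence $\xi_k$ with $|\xi_k|\to\infty$ and $|\psi(\xi_k)|\to L<r^2$ lies in $K^{d_\psi}(0,r)$ eventually, so $K^{d_\psi}(0,r)$ is unbounded. The substance of the statement is the borderline $r^2=L$ (relevant only when $L<\infty$): I must show $K^{d_\psi}(0,\sqrt L)$ is unbounded. By Lemma~\ref{cndf-03} the radii $M(s)$ are finite and increasing for $0<s<\sqrt L$ and the level sets $\{|\psi|=s^2\}$ are then non-empty and compact, so it suffices to show $M(s)\to\infty$ as $s\uparrow\sqrt L$: then the points $\eta_s$ with $|\psi(\eta_s)|=s^2<L$ and $|\eta_s|=M(s)$ all lie in $K^{d_\psi}(0,\sqrt L)$ and escape to infinity. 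Equivalently, I must rule out that $|\psi(\xi)|>L$ for all $\xi$ outside some Euclidean ball $B(0,M_1)$; in that case $\inf_{|\xi|\ge M_1}|\psi(\xi)|$ would equal $L$ (it can never exceed $\liminf_{|\xi|\to\infty}|\psi(\xi)|$) without being attained. \textbf{This exclusion is the main obstacle of the proof}: it genuinely uses the negative-definite structure, not mere continuity. The tool I would try is the subadditivity \eqref{cndf-e04} --- equivalently $|\psi(n\xi)|\le n^2|\psi(\xi)|$ together with the uniform continuity of $\sqrt{|\psi|}$ it implies --- applied along the rays $t\mapsto t\xi_k$ through a minimising sequence and the last point $v_k$ on such a ray with $|\psi(v_k)|=L$; these $v_k$ remain in a fixed compact annulus (between $\operatorname{dist}(0,\{|\psi|=L\})$ and $M_1$), and running the subadditivity estimate through multiples of $v_k$ is meant to contradict the a priori bound $M_1$. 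I expect this to be the one step requiring real work, and the authors may well argue differently.

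The second statement then follows with no extra input, and it does not depend on the borderline case. Since $d_\psi$ is translation invariant, it generates the Euclidean topology on $\rn$ if and only if the balls $B^{d_\psi}(0,r)$ and the Euclidean balls $B(0,\varepsilon)$ are mutually cofinal as neighbourhood bases at $0$. If $L>0$: Lemma~\ref{cndf-03} gives $m(r)>0$ for $0<r<\sqrt L$ and $B(0,m(r))\subseteq B^{d_\psi}(0,r)$, so every $d_\psi$-ball contains a Euclidean one; conversely, for $\varepsilon>0$ put $c_\varepsilon:=\inf_{|\xi|\ge\varepsilon}|\psi(\xi)|$, which is $>0$, for otherwise there are $\eta_k$ with $|\eta_k|\ge\varepsilon$ and $|\psi(\eta_k)|\to0$, and a subsequence either converges to some $\eta^\ast$ with $|\eta^\ast|\ge\varepsilon$ and $\psi(\eta^\ast)=0$ (impossible by non-periodicity) or escapes to infinity (impossible since $L>0$); hence $B^{d_\psi}(0,\sqrt{c_\varepsilon})\subseteq B(0,\varepsilon)$, so every Euclidean ball contains a $d_\psi$-ball, and the two topologies coincide. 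If $L=0$ with $\psi$ non-periodic, a sequence $\xi_k$ with $|\xi_k|\to\infty$ and $|\psi(\xi_k)|\to0$ satisfies $d_\psi(\xi_k,0)\to0$ while $\xi_k\not\to0$ in the Euclidean sense, so the topologies differ (the periodic case $L=0$ was handled above). This completes the proof apart from the borderline claim $M(s)\to\infty$ flagged in the previous paragraph.
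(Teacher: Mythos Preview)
Your approach coincides with the paper's: both directions of the first equivalence are read off directly from the definition of the $\liminf$, and the topology statement follows by comparing $d_\psi$-balls with Euclidean balls via $m(r)$ and $M(r)$. The only substantive difference is that you isolate the borderline case $r^2=L$ and try to resolve it, whereas the paper does not. The paper simply writes: from $K^{d_\psi}(0,r)\subset B(0,\rho)$ one gets $|\psi(\xi)|>r^2$ for all $|\xi|>\rho$, and ``this shows that $\liminf_{|\xi|\to\infty}|\psi(\xi)|>r^2$.'' But that inference only yields $\geq r^2$, not $>r^2$; the strict inequality is asserted without further argument. So the issue you flag as the main obstacle is real, and the paper does not address it---your subadditivity sketch is already more than the paper offers, even though you leave it incomplete.

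As you correctly observe, this borderline case is irrelevant for the second assertion of the lemma (and hence for everything the paper actually uses later). There your argument and the paper's are essentially the same, with the minor difference that you prove $B^{d_\psi}(0,\sqrt{c_\varepsilon})\subset B(0,\varepsilon)$ via $c_\varepsilon:=\inf_{|\xi|\ge\varepsilon}|\psi(\xi)|>0$, while the paper invokes the inclusion $B^{d_\psi}(0,r)\subset\overline{B(0,M(r))}$ and implicitly uses that $M(r)\to 0$ as $r\to 0$.
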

\begin{proof}
    Assume that $r^2 < \liminf_{|\xi|\to\infty} |\psi(\xi)|$. Then $M(r)= \sup\left\{|\xi|\::\: \sqrt{|\psi(\xi)|} = r\right\}$ is finite, and the inclusion $K^{d_\psi}(0,r)\subset \overline{B(0,M(r))}$ shows that $K^{d_\psi}(0,r)$ is bounded in the Euclidean topology.

    Conversely, assume that $K^{d_\psi}(0,r)$ is bounded. Then $K^{d_\psi}(0,r)\subset B(0,\rho)$ for some $\rho>0$. In particular, $|\psi(\xi)| > r^2$ for all $|\xi|>\rho$. This shows that $\liminf_{|\xi|\to\infty} |\psi(\xi)| > r^2$.

    If $\liminf_{|\xi|\to\infty} |\psi(\xi)|>0$, then we have for all $0 < r < \liminf_{|\xi|\to\infty} \sqrt{|\psi(\xi)|}$
    $$
        B(0,m(r)) \subset B^{d_\psi}(0,r) \subset \overline{B(0,M(r))}.
    $$
    This proves that the neighbourhood basis induced by $d_\psi$ and the Euclidean neighbourhood basis are comparable, i.e.\ the topologies coincide.

    If $\liminf_{|\xi|\to\infty}|\psi(\xi)|=0$ and if $\psi$ is not periodic, then the $d_\psi$ metric cannot distinguish between $0$ and the points at infinity. This means that the metric $d_\psi$ does not generate the Euclidean topology.
\end{proof}

For our purposes it is helpful to assume that $d_\psi$ generates on $\rn$ the Euclidean topology. To simplify notation we introduce the following definition.

\begin{definition}\label{cndf-07}
    Let $\psi:\rn\to\comp$ be a non-periodic, locally bounded negative definite function with $\psi(0)=0$. We call $\psi$ \emph{metric generating} on $\rn$, if the metric $d_\psi(\xi,\eta):=\sqrt{|\psi(\eta-\xi)|}$ generates on $\rn$ the Euclidean topology.  The set of all continuous metric generating negative definite functions on $\rn$ is denoted by $\mcn(\rn)$.
\end{definition}
We will use the term `metric generating' exclusively for $\psi\in\mcn(\rn)$.

From Lemma \ref{cndf-05} it follows that a continuous negative definite function is metric generating if, and only if, $\liminf_{|\xi|\to\infty}|\psi(\xi)|>0$.

Let $\psi\in\mcn(\rn)$. We want to study the metric measure space $(\rn,d_\psi,\lambda)$. In the analysis on metric measure spaces the notion of volume doubling plays a central role.
\begin{definition}\label{cndf-09}
    Let $(X,d,\mu)$ be a metric measure space. We say that $(X,d,\mu)$ or $\mu$ has the \emph{volume doubling property} if there exists a constant $c_2$ such that
    \begin{equation}\label{cndf-e20}
        \mu(B^d(x,2r)) \leq c_2 \, \mu(B^d(x,r))
    \end{equation}
    holds for all metric balls $B^d(x,r) = \{y\in X\::\: d(y,x)<r\}\subset X$. If \eqref{cndf-e20} holds only for all balls with radii $r<\rho$ for some fixed $\rho>0$, we say that $(X,d,\mu)$ (or $\mu$) is \emph{locally} volume doubling.
\end{definition}

\begin{remark}\label{cndf-11}
    If $(X,d,\mu)$ is volume doubling with doubling constant $c_2>1$, then it follows for every $R\geq 1$ that
    \begin{equation}\label{cndf-e22}
        \mu\big(B^d(x,R)\big)
        \leq c_2^{\log_2 R} \mu\big(B^d(x,1)\big)
        = R^{\log_2 c_2} \mu\big(B^d(x,1)\big).
    \end{equation}
    Thus, volume doubling entails that balls have at most power growth of their volume.
\end{remark}

Recall that a metric space $(X,d)$ is said to be of \emph{homogeneous type} in the sense of Coifman and Weiss \cite{CW} if there exists some $N\geq 1$ such that for all $x\in X$ and all radii $r>0$ the ball $B(r,x)$ contains at most $N$ points $x_1, \ldots, x_N$ such that $d(x_j,x_k)>\frac r2$ whenever $j\neq k$.

The following result is taken from \cite{SL}.
\begin{lemma}\label{cndf-13}
    If $(\rn,d_\psi,\lambda)$, $\psi\in\mcn(\rn)$, is volume doubling, it is of homogeneous type.
\end{lemma}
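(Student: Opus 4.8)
The plan is to run the classical volume‑counting (packing) argument that converts a doubling condition into a uniform bound on the number of well‑separated points that can sit inside a ball.

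I would first record three elementary properties of the metric measure space $(\rn,d_\psi,\lambda)$ for $\psi\in\mcn(\rn)$. Since $d_\psi$ is translation invariant and $B^{d_\psi}(\xi,r)=\xi+B^{d_\psi}(0,r)$, the function $V(r):=\lambda\big(B^{d_\psi}(\xi,r)\big)$ does not depend on the centre $\xi$; this is exactly what will let a single doubling constant control all balls simultaneously. Because $\psi\in\mcn(\rn)$, the metric $d_\psi$ generates the Euclidean topology, so every ball $B^{d_\psi}(\xi,r)$ is a non‑empty Euclidean‑open set and therefore $V(r)>0$ for all $r>0$. Finally $V(r)<\infty$ for every $r>0$: by Lemma~\ref{cndf-05} balls of sufficiently small radius are bounded in the Euclidean topology, hence of finite Lebesgue measure, and writing an arbitrary radius as $r=2^m r'$ with $r'$ small, the doubling property \eqref{cndf-e20} gives $V(r)\leq c_2^m V(r')<\infty$.

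Now fix $x\in\rn$ and $r>0$, and suppose $x_1,\dots,x_N\in B^{d_\psi}(x,r)$ satisfy $d_\psi(x_j,x_k)>r/2$ whenever $j\neq k$. The triangle inequality shows that the balls $B^{d_\psi}(x_j,r/4)$, $j=1,\dots,N$, are pairwise disjoint (a common point of $B^{d_\psi}(x_j,r/4)$ and $B^{d_\psi}(x_k,r/4)$ would force $d_\psi(x_j,x_k)<r/2$), and that each of them is contained in $B^{d_\psi}(x,2r)$ (in fact in $B^{d_\psi}(x,\tfrac54 r)$). Hence, using that $V$ is independent of the centre,
\begin{equation*}
    N\,V(r/4)=\sum_{j=1}^N\lambda\big(B^{d_\psi}(x_j,r/4)\big)\leq\lambda\big(B^{d_\psi}(x,2r)\big)=V(2r).
\end{equation*}
Since $2r=2^3\cdot(r/4)$, three applications of \eqref{cndf-e20} yield $V(2r)\leq c_2^3\,V(r/4)$, and because $0<V(r/4)<\infty$ we may divide to obtain $N\leq c_2^3$.

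Consequently $N:=\lfloor c_2^3\rfloor$ (which is $\geq 1$, as $c_2\geq 1$) has the property demanded in the definition of a space of homogeneous type of Coifman–Weiss, uniformly in $x\in\rn$ and $r>0$. I do not expect a real obstacle here: the only points that need a moment's care are the centre‑independence and the finiteness of $V$, and both follow immediately from $\psi\in\mcn(\rn)$ together with the doubling hypothesis, as explained above; the rest is the standard packing estimate.
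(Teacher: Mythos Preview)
Your proof is correct and follows essentially the same packing argument as the paper: disjoint balls $B^{d_\psi}(x_j,r/4)$ inside $B^{d_\psi}(x,2r)$, translation invariance to make all small balls have equal volume, and three applications of doubling to get $N\leq c_2^3$. Your explicit verification that $0<V(r)<\infty$ (so that the division is legitimate) is a point the paper passes over in silence, but otherwise the two proofs are identical.
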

\begin{proof}
    Let $c_2$ be the volume doubling constant as in \eqref{cndf-e20} and let $x_1,\ldots,x_N\in B^{d_\psi}(x,r)$ such that $d_\psi(x_j,x_k)>\frac r2$ for $j\neq k$. By the triangle inequality we see
    $$
        B^{d_\psi}(x_j,r/4)\cap B^{d_\psi}(x_k,r/4)= \emptyset
        \quad\text{for all}\quad j\neq k.
    $$
    Since $d_\psi$ is invariant under translations we get $\lambda\big(B^{d_\psi}(x_j,r/4)\big) = \lambda\big(B^{d_\psi}(x,r/4)\big)$. Moreover, $B^{d_\psi}(x_j,r/4) \subset B^{d_\psi}(x,2r)$ and applying \eqref{cndf-e20} three times yields
    $$
        N \lambda\left(B^{d_\psi}(x,r/4)\right)
        =\lambda\left(\bigcup_{j=1}^N B^{d_\psi}(x_j,r/4)\right)
        \leq\lambda\left(B^{d_\psi}(x,2r)\right)
        \leq c_2^3 \lambda\left(B^{d_\psi}(x,r/4)\right).
    $$
    This proves that $N\leq c_2^3$.
\end{proof}

We are interested in the volume growth of balls in the metric measure space $(\rn,d_\psi,\lambda)$. The following result appears in in a weaker form in \cite{SL}.
\begin{proposition}\label{cndf-15}
    Let $\psi\in\mcn(\rn)$ and $m,M$ as in \eqref{cndf-e10}. Then the following inequality holds for all $0<r<R<\liminf_{|\xi|\to\infty}\sqrt{|\psi(\xi)|}$
    \begin{equation}\label{cndf-e24}
        \lambda\big(B^{d_\psi}(0,R)\big)
        \leq \left(\tfrac{M(R)}{m(r)}\right)^n \lambda\big(B^{d_\psi}(0,r)\big).
    \end{equation}
\end{proposition}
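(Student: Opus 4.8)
The plan is to sandwich the ball $B^{d_\psi}(0,R)$ between Euclidean balls, pass through Lebesgue measure in the Euclidean metric, and then sandwich back into a $d_\psi$-ball. The key structural fact we will use is the pair of inclusions recorded just before Lemma~\ref{cndf-03}, namely
\begin{equation*}
    B\big(0,m(\rho)\big) \subset B^{d_\psi}(0,\rho) \subset B\big(0,M(\rho)\big)
\end{equation*}
valid for every $0<\rho<\liminf_{|\xi|\to\infty}\sqrt{|\psi(\xi)|}$, together with the finiteness $0<m(\rho)\le M(\rho)<\infty$ on that range, which is exactly the content of Lemma~\ref{cndf-03}. Since Lebesgue measure scales as $\lambda(B(0,s))=s^n\lambda(B(0,1))=\omega_n s^n$, the volume of a $d_\psi$-ball is trapped between $\omega_n m(\rho)^n$ and $\omega_n M(\rho)^n$.

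Concretely, I would first apply the right-hand inclusion at radius $R$ to get $\lambda\big(B^{d_\psi}(0,R)\big)\le \omega_n M(R)^n$. Next I would apply the left-hand inclusion at radius $r$ to get $\omega_n m(r)^n \le \lambda\big(B^{d_\psi}(0,r)\big)$, equivalently $\omega_n \le m(r)^{-n}\lambda\big(B^{d_\psi}(0,r)\big)$. Substituting this into the first estimate yields
\begin{equation*}
    \lambda\big(B^{d_\psi}(0,R)\big)
    \le \omega_n M(R)^n
    = M(R)^n \cdot \omega_n
    \le M(R)^n \cdot \frac{\lambda\big(B^{d_\psi}(0,r)\big)}{m(r)^n}
    = \left(\frac{M(R)}{m(r)}\right)^n \lambda\big(B^{d_\psi}(0,r)\big),
\end{equation*}
which is precisely \eqref{cndf-e24}. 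The hypothesis $0<r<R<\liminf_{|\xi|\to\infty}\sqrt{|\psi(\xi)|}$ is used exactly to guarantee, via Lemma~\ref{cndf-03}, that $m(r)>0$ and $M(R)<\infty$, so that the ratio $M(R)/m(r)$ is a finite positive number and the division is legitimate.

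There is essentially no obstacle here beyond bookkeeping: the only point requiring a moment's care is confirming that the two inclusions hold at the correct radii (the inner Euclidean ball at the \emph{smaller} radius $r$, the outer Euclidean ball at the \emph{larger} radius $R$) and that $m$ is evaluated at $r$ while $M$ is evaluated at $R$ — doing it the other way around would also give a valid but weaker bound. One should also note that $m(r)\le M(r)$ and monotonicity of $m,M$ (Lemma~\ref{cndf-03}) make the constant $M(R)/m(r)\ge 1$, as one expects for a doubling-type estimate. The normalizing constant $\omega_n=\lambda(B(0,1))$ cancels identically and never needs to be computed.
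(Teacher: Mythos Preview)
Your proposal is correct and follows essentially the same approach as the paper: both arguments sandwich the $d_\psi$-balls between Euclidean balls via the inclusions $B(0,m(\rho))\subset B^{d_\psi}(0,\rho)\subset B(0,M(\rho))$ and then use the scaling of Lebesgue measure. The paper presents this slightly more compactly as a single chain of inclusions $B^{d_\psi}(0,R)\subset B(0,M(R))=\tfrac{M(R)}{m(r)}\,B(0,m(r))\subset\tfrac{M(R)}{m(r)}\,B^{d_\psi}(0,r)$ followed by one application of $\lambda$, whereas you compute the two volume bounds separately and combine them; the content is identical.
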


Note that \eqref{cndf-e24} does \emph{not} imply the doubling property as $M(2r)/m(r)$ may depend on $r$.

\begin{proof}
    For $0<r<R<\liminf_{|\xi|\to\infty}\sqrt{|\psi(\xi)|}\leq\infty$ the functions $m$ and $M$ are strictly positive and finite.  By the very definition of the functions $m$ and $M$ we see that
    $$
        B^{d_\psi}(0,R) \subset B(0,M(R)) = \frac{M(R)}{m(r)} B(0,m(r)) \subset \frac{M(R)}{m(r)} B^{d_\psi}(0,r).
    $$
    Taking Lebesgue measure in the above chain of inclusions yields \eqref{cndf-e24}.
\end{proof}

Many of the most important and concrete continuous negative definite functions are related to subordination. Let $f$ be a (non-degenerate, i.e.\ non-constant) Bernstein function and $\psi\in\mcn(\rn)$. If $f(0)=0$, then $f\circ\psi$ is again metric generating since every Bernstein function is strictly monotone increasing, cf.\ \cite[Remark 1.5 and Definition 3.1]{SSV}. For some subordinate negative definite functions we can calculate the volume growth constant appearing in \eqref{cndf-e22} explicitly. Assume that $\psi(\xi) = f(|\xi|^2)$. Since $f$ is strictly increasing, it is obvious that
$$
    M(r) = m(r) = \sqrt{f^{-1}(r^2)}.
$$
Therefore the volume growth constant is $(R/r)^{n/\alpha}$ if $\psi(\xi) = |\xi|^{2\alpha}$ with $0<\alpha<1$. In this case we do have volume doubling. If, however, $\psi(\xi) = \ln (1+|\xi|^2)$, the constant becomes $\left(\frac{\exp(R^2)-1}{\exp(r^2)-1}\right)^{n/2}$ and volume doubling clearly fails. Note that we also do not have power growth. Finally, if $\psi(\xi) = 1-\exp(-|\xi|^2)$, the inverse $f^{-1}$ is only defined on $(0,1)$. Therefore, the volume growth constant is only defined for radii $r<R<1$ and we get $\left(\frac{\ln(1-R^2)}{\ln(1-r^2)}\right)^{n/2}$. In this case we have local volume doubling.

Let us close this section with an observation from \cite[Lemma 9]{KS1} where it is shown that the volume doubling property for large radii entails that $(1+\psi)^{-\kappa/2}\in L^2(\real^n,\lambda)$ for some suitable exponent $\kappa>0$. This, in turn, has implications for the smoothness of the transition densities, cf.\ \cite{KS1}.

\begin{lemma}\label{cndf-19}
    Let $\phi : (0,\infty)\to (0,\infty)$ be an increasing function such that
    $$
        \liminf_{r\to\infty} \frac{\phi(Cr)}{\phi(r)} > 1
        \quad\text{for some}\quad C > 1.
    $$
    Then $\phi$ grows at least like a (fractional) power, i.e.\ there exist constants $c_0,r_0,\kappa>0$ such that
    $$
        \phi(r) \geq c_0\,r^\kappa\quad\text{for all}\quad r\geq r_0.
    $$
\end{lemma}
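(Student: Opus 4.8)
The plan is to upgrade the $\liminf$ condition into a uniform multiplicative lower bound along the geometric progression of radii $C^k r_0$, iterate it, and then transfer the estimate to arbitrary large radii using the monotonicity of $\phi$.

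First I would extract a clean inequality from the hypothesis: since $\liminf_{r\to\infty}\phi(Cr)/\phi(r)>1$, there exist a constant $\lambda>1$ and a threshold $r_0>0$ such that
\[
    \phi(Cr)\geq\lambda\,\phi(r)\qquad\text{for all }r\geq r_0 .
\]
Applying this repeatedly at the points $C^{k-1}r_0\geq r_0$ gives, by induction on $k$, the estimate $\phi(C^k r_0)\geq\lambda^k\phi(r_0)$ for every integer $k\geq 0$.

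Next I would interpolate to an arbitrary $r\geq r_0$. Put $k:=\lfloor\log_C(r/r_0)\rfloor$, so that $C^k r_0\leq r$. Because $\phi$ is increasing,
\[
    \phi(r)\geq\phi(C^k r_0)\geq\lambda^k\phi(r_0).
\]
Since $k>\log_C(r/r_0)-1$ and $\lambda>1$, we have $\lambda^k\geq\lambda^{-1}(r/r_0)^{\kappa}$ with $\kappa:=\ln\lambda/\ln C>0$; hence $\phi(r)\geq c_0\,r^{\kappa}$ for all $r\geq r_0$, where $c_0:=\lambda^{-1}r_0^{-\kappa}\phi(r_0)>0$. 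This is exactly the claimed fractional-power lower bound.

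I do not expect a serious obstacle here. The only point that needs a moment's care is that the hypothesis controls $\phi$ only along multiplicative increments of size $C$, so the lower bound first appears along the discrete set $\{C^k r_0\}_k$ and must be spread to all radii; this step is immediate from the assumed monotonicity of $\phi$, which is precisely why monotonicity is built into the statement. (Were $\phi$ merely assumed measurable, one would need an extra regularity input such as quasi-monotonicity to carry out the interpolation.)
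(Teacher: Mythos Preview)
Your proof is correct and follows essentially the same route as the paper: extract $\lambda>1$ and $r_0$ from the $\liminf$, iterate along the geometric sequence $C^k r_0$ to get $\phi(C^k r_0)\geq\lambda^k\phi(r_0)$, then use monotonicity to interpolate and set $\kappa=\ln\lambda/\ln C$. The paper's argument is identical up to notation (it writes $\gamma$ for your $\lambda$ and defines $\kappa$ via $C=\gamma^{1/\kappa}$), arriving at the same constant $c_0=\lambda^{-1}r_0^{-\kappa}\phi(r_0)$.
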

\begin{proof}
    By assumption there exist some $\gamma>1$ and $r_0 > 0$ such that
    $$
        \phi(Cr)\geq \gamma \phi(r)\quad\text{for all}\quad r\geq r_0.
    $$
    Let $r\in [C^k r_0, C^{k+1}r_0)$. Then we find
    $$
        \phi(r) \geq \phi(C^k r_0) \geq \gamma^k \phi(r_0).
    $$
    Let $\kappa$ be the unique solution of the equation $C = \gamma^{1/\kappa}$. Then we get $\gamma^k \leq r^\kappa/r_0^\kappa \leq \gamma\cdot \gamma^k$ and this entails that
    \begin{gather*}
        \phi(r) \geq \gamma^k \phi(r_0) \geq \frac 1\gamma\,\frac{\phi(r_0)}{r_0^\kappa}\,r^\kappa
        \quad\text{for all}\quad r\geq r_0.
    \qedhere
    \end{gather*}
\end{proof}

\begin{proposition}\label{cndf-21}
    Let $\psi(\xi) = f(|\xi|^2)$ be a continuous negative definite function where $f$ is a Bernstein function with $f(0)=0$. Then $\psi$ has the volume doubling property if, and only if, $\liminf_{r\to\infty} f(Cr)/f(r) > 1$ and $\liminf_{r\to 0} f(Cr)/f(r) > 1$ for some $C>1$.

    In particular, if $\psi$ has the volume doubling property, then $f(r)$ grows, as $r\to\infty$, at least like a fractional power.
\end{proposition}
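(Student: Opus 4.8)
The plan is to reduce the assertion to a doubling statement for the inverse function $g:=f^{-1}$ and then translate that back into a growth condition on $f$. Since $f$ is a non-constant Bernstein function with $f(0)=0$, it is continuous and strictly increasing on $[0,\infty)$, hence it has a continuous strictly increasing inverse $g$ on the range $f([0,\infty))$. If $f$ is bounded, then $f(Cr)/f(r)\to 1$ as $r\to\infty$, so the condition $\liminf_{r\to\infty}f(Cr)/f(r)>1$ fails; at the same time, choosing $r$ with $r^2\in[\tfrac{1}{4}\sup f,\,\sup f)$ gives $B^{d_\psi}(0,2r)=\rn$, of infinite measure, while $B^{d_\psi}(0,r)$ has finite measure, so volume doubling fails too. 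Hence in this case both sides of the equivalence are false, and I may assume from now on that $f$ is unbounded, i.e.\ $g:(0,\infty)\to(0,\infty)$ is an increasing bijection.

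First I would identify the metric balls. As $\psi\ge 0$ is even and $f$ is strictly increasing with $f(0)=0$,
\begin{equation*}
    B^{d_\psi}(0,r)=\{\xi\in\rn:\ f(|\xi|^2)<r^2\}=\{\xi\in\rn:\ |\xi|^2<g(r^2)\}=B\bigl(0,\sqrt{g(r^2)}\bigr),
\end{equation*}
so $\lambda\bigl(B^{d_\psi}(0,r)\bigr)=\omega_n\,g(r^2)^{n/2}$, where $\omega_n:=\lambda(B(0,1))$. Since $d_\psi$ is translation invariant, all balls of a given radius have the same volume, so the volume doubling property is equivalent to $\lambda(B^{d_\psi}(0,2r))\le c_2\,\lambda(B^{d_\psi}(0,r))$ for all $r>0$, i.e.\ (substituting $s=r^2$) to the existence of a constant $a>1$ with $g(4s)\le a\,g(s)$ for all $s>0$.

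Next I would convert this into a statement about $f$. Putting $u=g(s)$, which ranges over all of $(0,\infty)$, and applying the strictly increasing map $f$ to $g(4f(u))\le a\,u$, one sees that ``$g(4s)\le a\,g(s)$ for all $s>0$'' is equivalent to the condition $(\ast)$: \emph{there is some $c>1$ with $f(cu)\ge 4 f(u)$ for all $u>0$} (the reverse implication is obtained by applying $g$ instead of $f$). It remains to show that $(\ast)$ is equivalent to the two $\liminf$ conditions. One direction is trivial: $(\ast)$ gives $f(Cr)/f(r)\ge 4$ for all $r>0$ with $C=c$. For the other direction --- the only genuinely delicate point --- I would upgrade the two \emph{local} bounds to a \emph{uniform} one. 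The function $h(r):=f(Cr)/f(r)$ is continuous on $(0,\infty)$ and satisfies $h(r)>1$ everywhere (as $f$ is strictly increasing and $C>1$); the hypotheses $\liminf_{r\to 0}h>1$ and $\liminf_{r\to\infty}h>1$ furnish $\delta>0$ and $R>\delta$ with $\inf_{0<r<\delta}h(r)>1$ and $\inf_{r>R}h(r)>1$, while continuity on the compact interval $[\delta,R]$ gives $\min_{\delta\le r\le R}h(r)>1$. Hence $f(Cr)\ge(1+\varepsilon)f(r)$ for all $r>0$ with some $\varepsilon>0$; iterating, $f(C^k r)\ge(1+\varepsilon)^k f(r)$ for all $r>0$ and $k\in\nat$, and choosing $k$ with $(1+\varepsilon)^k\ge 4$ and setting $c:=C^k$ yields $(\ast)$. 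This completes the equivalence.

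For the ``in particular'' statement: if $\psi$ is volume doubling, then in particular $\liminf_{r\to\infty}f(Cr)/f(r)>1$ for some $C>1$, and since $f$ restricted to $(0,\infty)$ is positive and increasing, Lemma~\ref{cndf-19} (applied with $\phi=f$) produces constants $c_0,r_0,\kappa>0$ with $f(r)\ge c_0 r^\kappa$ for all $r\ge r_0$. The main obstacle throughout is the passage from the two local $\liminf$ bounds to the uniform multiplicative inequality $f(Cr)\ge(1+\varepsilon)f(r)$; the rest is routine bookkeeping with the strictly monotone change of variables $u\leftrightarrow g(s)$.
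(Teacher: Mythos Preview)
Your proof is correct and follows essentially the same route as the paper: identify $B^{d_\psi}(0,r)$ as the Euclidean ball of radius $\sqrt{f^{-1}(r^2)}$, reduce volume doubling to a doubling inequality for $g=f^{-1}$, translate this back to $f$, and invoke Lemma~\ref{cndf-19} for the power growth. You are in fact more explicit than the paper at the one nontrivial point---passing from the two $\liminf$ conditions to a uniform bound $f(Cr)\ge(1+\varepsilon)f(r)$ for all $r>0$ via continuity of $f(Cr)/f(r)$ on compact intervals---whereas the paper's proof simply says one can ``reverse the above argument''; your treatment of the bounded-$f$ case is likewise a useful addition.
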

\begin{proof}
    Since $\psi(\xi) = f(|\xi|^2)$, we see that $M(r) = m(r) = \sqrt{f^{-1}(r^2)}$ and $B^{d_\psi}(0,r)=B(0,m(r))$. Therefore, by Lemma \ref{cndf-03}, the volume doubling property of $\psi$ is the same as
    $$
        M(\gamma r) \leq c_\gamma m(r) \quad\text{for all}\quad r>0, \; \gamma>1
    $$
    which is equivalent to
    $$
        f^{-1}(\gamma^2 r) \leq c_\gamma^2 f^{-1}(r) \quad\text{for all}\quad r>0, \; \gamma>1.
    $$
    This means that the Bernstein function $f$ has to be unbounded and, consequently, bijective. Substituting in this inequality $r=f(x)$ and applying on both sides $f$ we get
    $$
        \liminf_{x\to 0} \frac{f(Cx)}{f(x)} > 1
        \quad\text{and}\quad
        \liminf_{x\to\infty} \frac{f(Cx)}{f(x)} > 1
    $$
    for some constant $C > 1$. Since $f$ is increasing, the second condition entails power growth, cf.\ Lemma \ref{cndf-19}.

    Conversely, if $\liminf_{r\to\infty} f(Cr)/f(r) > 1$ for some $C>1$, $f$ is unbounded (otherwise the limit inferior would be $1$) and $f$ is bijective. Therefore we can reverse the above argument to deduce volume doubling from $\liminf_{r\to\infty} f(Cr)/f(r) > 1$ and $\liminf_{r\to 0} f(Cr)/f(r) > 1$.
\end{proof}

\begin{corollary}\label{cndf-23}
    Let $\psi\in\mcn(\rn)$ and consider the volume function $v_\psi(r):=\lambda(B^{d_\psi}(0,r))$. If $\psi$ has the volume doubling property, $v^{-1}_\psi$ grows at least like a (fractional) power.
\end{corollary}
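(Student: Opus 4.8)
The plan is to deduce the corollary by \emph{inverting} the power-type \emph{upper} bound on the volume function that is already contained in Remark~\ref{cndf-11}. First I would record the elementary properties of $v_\psi$ that make the statement meaningful: the map $r\mapsto v_\psi(r)=\lambda\big(B^{d_\psi}(0,r)\big)$ is nondecreasing, because the balls $B^{d_\psi}(0,r)$ grow with $r$; it satisfies $v_\psi(r)\downarrow 0$ as $r\downarrow 0$ (since $B^{d_\psi}(0,r)\subset B(0,M(r))$ and $M(r)\downarrow 0$ for $\psi\in\mcn(\rn)$) and $v_\psi(r)\uparrow\infty$ as $r\uparrow\infty$ (since $B^{d_\psi}(0,r)\uparrow\rn$). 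Accordingly $v_\psi^{-1}$ is to be understood as the generalized inverse $v_\psi^{-1}(s):=\inf\{r>0:\ v_\psi(r)\ge s\}$, which is again nondecreasing on $(0,\infty)$ and tends to $\infty$; as in Lemma~\ref{cndf-19}, ``grows at least like a fractional power'' means that $v_\psi^{-1}(s)\ge c_0\,s^{\kappa}$ for all $s\ge s_0$, with constants $c_0,s_0,\kappa>0$.

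I would then apply Remark~\ref{cndf-11} to the metric measure space $(\rn,d_\psi,\lambda)$ with centre $x=0$. Here one may assume that the doubling constant $c_2$ from \eqref{cndf-e20} satisfies $c_2>1$ (otherwise $v_\psi$ would be constant, contradicting the two limits just noted), so Remark~\ref{cndf-11} gives
$$
    v_\psi(R)\ \le\ R^{\log_2 c_2}\,v_\psi(1)\qquad\text{for all}\quad R\ge 1 .
$$
In particular $v_\psi$ is finite-valued, and writing $\alpha:=\log_2 c_2>0$ and $A:=v_\psi(1)\in(0,\infty)$ we have $v_\psi(R)\le A R^{\alpha}$ whenever $R\ge 1$.

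Finally I would invert this inequality. Fix $s>A$ and put $R_0:=(s/A)^{1/\alpha}$, so that $R_0>1$. If $v_\psi^{-1}(s)<R_0$ held, then by the definition of the infimum there would exist some $r<R_0$ with $v_\psi(r)\ge s$; but $r\ge 1$ would force $v_\psi(r)\le A r^{\alpha}<A R_0^{\alpha}=s$, while $r<1$ would force $v_\psi(r)\le v_\psi(1)=A<s$, a contradiction in either case. Hence
$$
    v_\psi^{-1}(s)\ \ge\ R_0\ =\ A^{-1/\alpha}\,s^{1/\alpha}\qquad\text{for all}\quad s>A ,
$$
which is exactly the assertion, with $c_0:=v_\psi(1)^{-1/\log_2 c_2}$, $\kappa:=1/\log_2 c_2$ and $s_0:=v_\psi(1)$.

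There is no real obstacle here: the whole substance is Remark~\ref{cndf-11}, and the remainder is just the inversion of a monotone function. The only point that deserves a little care is the bookkeeping for the generalized inverse of the possibly discontinuous $v_\psi$, together with the behaviour at the threshold $s=A$; that is why I would pin down the meaning of $v_\psi^{-1}$ at the outset. One could instead route the argument through Lemma~\ref{cndf-19}, but since that lemma would still have to be combined with the same inversion step, the direct deduction from Remark~\ref{cndf-11} is the most economical.
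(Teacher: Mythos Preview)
Your argument is correct. The paper's own proof is a single sentence pointing back to Proposition~\ref{cndf-21}: there one inverts the doubling inequality $v_\psi(2r)\le c_2 v_\psi(r)$ to obtain $v_\psi^{-1}(c_2 s)\ge 2\,v_\psi^{-1}(s)$, and then applies Lemma~\ref{cndf-19} to $\phi=v_\psi^{-1}$. You instead route through Remark~\ref{cndf-11}, first iterating the doubling to a power upper bound $v_\psi(R)\le A R^{\alpha}$ and then inverting once. The two arguments differ only in the order of ``iterate'' and ``invert''; Remark~\ref{cndf-11} and Lemma~\ref{cndf-19} encode the same geometric-series computation on the two sides of the inverse. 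Your version has the small advantage of being fully self-contained and making the constants $\kappa=1/\log_2 c_2$ and $c_0=v_\psi(1)^{-\kappa}$ explicit; the paper's route is marginally more in keeping with the surrounding text, since Lemma~\ref{cndf-19} is the tool just introduced for exactly this purpose. One tiny point you leave implicit: the finiteness $v_\psi(1)<\infty$ that Remark~\ref{cndf-11} needs does not come from the remark itself but follows because $v_\psi(r)<\infty$ for small $r$ (your inclusion $B^{d_\psi}(0,r)\subset B(0,M(r))$) and doubling then propagates finiteness to all radii.
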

\begin{proof}
    Since $v_\psi$ is increasing, this is similar to the corresponding part of the proof of Proposition \ref{cndf-21}.
\end{proof}

Note that
$$
     v^{-1}_\psi(r) = \sup\big\{ t\geq 0\::\: \lambda \{\xi\in\rn\::\:  |\psi(\xi)|<t\}\leq r \big\}, \quad r>0.
$$
This means that $v^{-1}_\psi(r)$ is actually the \emph{increasing rearrangement} of $|\psi|$ which we denote by $\psi_*(r):= v^{-1}_\psi(r)$. With this notation Corollary \ref{cndf-23} reads: $\psi\in\mcn(\rn)$ has the volume doubling property if, and only if, $\liminf_{r\to\infty} \psi_*(Cr)/\psi_*(r) > 1$ and $\liminf_{r\to 0} \psi_*(Cr)/\psi_*(r) > 1$ for some $C>1$. If this is the case, then $\psi_*(r)$ grows, as $r\to\infty$, at least like a (fractional) power.

\begin{corollary}\label{cndf-25}
    Assume that $\psi\in\mcn(\rn)$ enjoys the volume doubling property and that $f$ is a Bernstein function such that
    $$
        \liminf_{r\to 0} \frac{f(Cr)}{f(r)}>1
        \quad\text{and}\quad
        \liminf_{r\to\infty} \frac{f(Cr)}{f(r)}>1
    $$
    for some $C>1$. Then $f\circ\psi\in\mcn$ and $f\circ\psi$ has the volume doubling property.
\end{corollary}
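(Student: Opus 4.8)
The plan is to combine two ingredients already at our disposal: the fact that a non-degenerate Bernstein function $f$ with $f(0)=0$ sends $\mcn(\rn)$ into itself (the discussion following Proposition~\ref{cndf-15}), and the characterisation of the volume doubling property for functions in $\mcn(\rn)$ in terms of the increasing rearrangement (the reformulation of Corollary~\ref{cndf-23} recorded just before the present statement).

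\emph{Step 1 (that $f\circ\psi\in\mcn(\rn)$).} First I would record what the hypotheses on $f$ force. If $f(0)>0$, then $f(Cr)/f(r)\to 1$ as $r\to 0$; if $f$ were bounded (in particular, if $f$ were constant), then $f(Cr)/f(r)\to 1$ as $r\to\infty$. Both possibilities contradict the assumptions, so $f$ is a non-degenerate, unbounded Bernstein function with $f(0)=0$, hence a strictly increasing bijection of $[0,\infty)$. By the discussion following Proposition~\ref{cndf-15}, $f\circ\psi$ is then again metric generating, i.e.\ $f\circ\psi\in\mcn(\rn)$. This disposes of the first assertion.

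\emph{Step 2 (volume doubling).} By the quoted characterisation it is enough to produce some $\widetilde C>1$ with
$$
  \liminf_{r\to\infty}\frac{(f\circ\psi)_*(\widetilde C r)}{(f\circ\psi)_*(r)}>1
  \qquad\text{and}\qquad
  \liminf_{r\to 0}\frac{(f\circ\psi)_*(\widetilde C r)}{(f\circ\psi)_*(r)}>1,
$$
where $(f\circ\psi)_*$ denotes the increasing rearrangement of $|f\circ\psi|$. The first point is the identity $(f\circ\psi)_*=f\circ\psi_*$: since $f$ is a strictly increasing bijection of $[0,\infty)$, the level sets transform by $\{\,|f\circ\psi|<t\,\}=\{\,|\psi|<f^{-1}(t)\,\}$, so the distribution functions compose and one reads off $(f\circ\psi)_*=f\circ\psi_*$ (for genuinely complex-valued $\psi$ this equality is replaced by $(f\circ\psi)_*\asymp f\circ\psi_*$ via the standard two-sided bound relating $|f(z)|$ and $f(|z|)$ on the right half-plane; since volume doubling is stable under pointwise comparison of the generating function, this makes no difference). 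Now I would invoke the hypotheses: $\psi$ being volume doubling, $\psi_*$ satisfies both $\liminf>1$ conditions with some $C>1$, and it is increasing with $\psi_*(0+)=0$ and, by Corollary~\ref{cndf-23}, $\psi_*(r)\to\infty$ as $r\to\infty$; the hypotheses on $f$ give, near each of the endpoints $0$ and $\infty$, some $C_f>1$ and $\gamma>1$ with $f(C_f s)\ge\gamma f(s)$ there. Iterating the doubling of $\psi_*$ produces, near each endpoint, some $\beta>1$ with $\psi_*(C^{k}r)\ge\beta^{k}\psi_*(r)$; choosing $k$ so large that $\beta^{k}\ge C_f$ and setting $\widetilde C:=C^{k}$, the monotonicity of $f$ yields, once $r$ is close enough to the relevant endpoint (so that $\psi_*(r)$ lies in the range where the estimate for $f$ applies — this is precisely where $\psi_*(0+)=0$ and $\psi_*(\infty)=\infty$ enter),
$$
  f\bigl(\psi_*(\widetilde C r)\bigr)\ \ge\ f\bigl(C_f\,\psi_*(r)\bigr)\ \ge\ \gamma\,f\bigl(\psi_*(r)\bigr).
$$
Taking $\widetilde C$ to be the larger of the two values obtained at $0$ and at $\infty$, this establishes both $\liminf>1$ conditions for $(f\circ\psi)_*=f\circ\psi_*$, and the characterisation then shows that $f\circ\psi$ has the volume doubling property.

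The step I expect to be the main obstacle is the passage from $f$ and $\psi$ individually to the composite $f\circ\psi$: on the analytic side, verifying $(f\circ\psi)_*\asymp f\circ\psi_*$ cleanly — i.e.\ controlling $|f\circ\psi|$ against $f(|\psi|)$ for complex-valued $\psi$ — and on the combinatorial side, the constant-chase, namely making a single dilation factor $\widetilde C$ work simultaneously at $r\to 0$ and $r\to\infty$ and keeping track of $\beta$, $\gamma$ and the number of iterations $k$. Everything else is a routine combination of the quoted characterisation with the monotonicity of Bernstein functions.
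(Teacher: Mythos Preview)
Your proposal is correct but takes a more roundabout route than the paper. The paper works directly with the balls rather than passing through the increasing-rearrangement characterisation: since (as you correctly observe) the hypotheses force $f$ to be a strictly increasing bijection of $[0,\infty)$ with $f(0)=0$, one has the identity
$$
B^{d_{f\circ\psi}}(0,r)=\bigl\{\xi:\ f(\psi(\xi))<r^2\bigr\}=\bigl\{\xi:\ \psi(\xi)<f^{-1}(r^2)\bigr\}=B^{d_\psi}\bigl(0,\sqrt{f^{-1}(r^2)}\bigr).
$$
The paper then converts the two $\liminf$ hypotheses on $f$ into the single inverse-function inequality $f^{-1}(Cr)\le\gamma f^{-1}(r)$ valid for \emph{all} $r>0$ (continuity and strict monotonicity of $f$ handle the compact middle range), after which the doubling for $f\circ\psi$ follows in one line from the doubling for $\psi$ applied with dilation factor $\sqrt\gamma$. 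Your rearrangement identity $(f\circ\psi)_*=f\circ\psi_*$ is exactly the inverse-function version of the ball identity above, so the content is the same; but the paper's route avoids the iteration and the constant-chase that you yourself flag as the main obstacle. Your parenthetical about complex-valued $\psi$ is a legitimate general concern but irrelevant here: for $f\circ\psi$ to make sense and lie in $\mcn(\rn)$, and for either argument to go through, $\psi$ must already be real-valued.
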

\begin{proof}
    If $f\circ\psi$ is volume doubling, then $\liminf_{|\xi|\to\infty} |f(\psi(\xi))| > 0$ and we get that $f\circ\psi\in\mcn(\rn)$, cf.\ Lemma \ref{cndf-05}.

    Assume that $\psi$ is volume doubling. Since $f$ is continuous, our assumptions ensure that $f^{-1}(C r)\leq \gamma f^{-1}(r)$ for all $r>0$ and some $\gamma>1$.
    Then
    \begin{align*}
        \lambda\left(B^{d_{f\circ\psi}}\big(0,\sqrt C r\big)\right)
        &= \lambda\left(B^{d_\psi}\big(0,\sqrt{f^{-1}( C r^2)}\big)\right)\\
        &\leq \lambda\left(B^{d_\psi}\big(0,\sqrt{\gamma} \sqrt{f^{-1}(r^2)}\big)\right)\\
        &\leq c_{\sqrt{\gamma}} \lambda\left(B^{d_\psi}\big(0,\sqrt{f^{-1}(r^2)}\big)\right)\\
        &=  c_{\sqrt{\gamma}} \lambda\left(B^{d_{f\circ\psi}}(0,r)\right).
    \end{align*}
    Because of Lemma \ref{cndf-05} we see that $f\circ\psi$ is volume doubling.
\end{proof}

\section{Understanding the role of $p_t(0)$}\label{pto}

Let $\psi\in\mcn(\rn)$ and denote by $(\mu_t)_{t\geq 0}$ the corresponding convolution semigroup satisfying $\Ff^{-1}\mu_t = e^{-t\psi}$. We assume that $e^{-t\psi}\in L^1(\rn,\lambda)$, so that the measures $\mu_t$ are absolutely continuous with respect to Lebesgue measure. The probability densities are given by
\begin{equation*}
    p_t(x) = (2\pi)^{-n}\int_{\rn} e^{-ix\xi} e^{-t\psi(\xi)}\,d\xi = \Ff e^{-t\psi}(x),\quad t>0,
\end{equation*}
and, by the Riemann-Lebesgue lemma, we know that $x\mapsto p_t(x)$ is a continuous function vanishing at infinity.

In this section we will discuss the relation of $p_t(0)$ with the geometry induced by the metric measure space $(\rn,d_\psi,\lambda)$. \textbf{If not stated otherwise, we will assume that $\psi$ is real-valued.} This means, in particular, that $\psi(\xi)\geq 0$ and that $p_t(\cdot)$ is an even function.

Since $(2\pi)^n \, p_t(0) = \int_{\rn} e^{-t\psi(\xi)}\,d\xi$ and $d_\psi(\xi,0) = \sqrt{\psi(\xi)}$, we see that
$$
    \sigma_t(d\xi) := \frac{e^{-td_\psi^2(\xi,0)}}{(2\pi)^n\,p_t(0)}\,d\xi,\quad t>0,
$$
are probability measures on $\rn$. Recall that $B^{d_\psi}(\eta,r) = \left\{\xi\in\rn \::\: d_\psi(\xi,\eta) < r \right\}$. The following result is essentially contained in \cite{KS1}. Note that in \cite{KS1} the condition $e^{-t\psi}\in L^1(\rn)$ is substituted by a more general (Hartman-Wintner type) condition on the growth of $\psi$.
\begin{theorem}\label{pto-03}
    Let $\psi\in\mcn(\rn)$ and assume that $e^{-t\psi}\in L^1(\rn,\lambda)$. Then
    \begin{equation}\label{pto-e04}
        p_t(0) = (2\pi)^{-n} \int_0^\infty \lambda \left(B^{d_\psi}\big(0,\sqrt{r/t}\big)\right)\,e^{-r}\,dr,\quad t>0.
    \end{equation}
    If the metric measure space $(\rn,d_\psi,\lambda)$ has the volume doubling property, then $e^{-t\psi}\in L^1(\rn,\lambda)$ and
    \begin{equation}\label{pto-e06}
        p_t(0) \asymp
        \lambda\left(B^{d_\psi}\big(0,1/\sqrt t\big)\right)
        \quad\text{for all}\quad t>0.
    \end{equation}
\end{theorem}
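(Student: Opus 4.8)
The plan is to prove the exact identity \eqref{pto-e04} by a layer-cake (distribution function) argument, and then to deduce both the integrability $e^{-t\psi}\in L^1$ and the two-sided bound \eqref{pto-e06} from it using the volume doubling property. For the identity, I would start from $p_t(0) = (2\pi)^{-n}\int_\rn e^{-t\psi(\xi)}\,d\xi$ (legitimate since $p_t = \Ff e^{-t\psi}$), and use that $\psi$ is real-valued, hence $\psi\geq 0$, to write $e^{-t\psi(\xi)} = \int_0^\infty \I_{\{t\psi(\xi)<r\}}\,e^{-r}\,dr$. Interchanging the $d\xi$- and $dr$-integrations by Tonelli's theorem (the integrand is non-negative and measurable, $\psi$ being continuous), the inner integral becomes $\lambda(\{\xi\in\rn : \psi(\xi)<r/t\})$, and since $d_\psi(\xi,0)=\sqrt{\psi(\xi)}$ this set is exactly $B^{d_\psi}(0,\sqrt{r/t})$, which gives \eqref{pto-e04}. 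The point to record here is that this computation is valid with values in $[0,\infty]$, irrespective of integrability; this will be used in the second part.

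Next, assuming volume doubling, the first task is to show that every ball $B^{d_\psi}(0,\rho)$, $\rho>0$, has finite Lebesgue measure. Since $\psi\in\mcn(\rn)$ we have $L:=\liminf_{|\xi|\to\infty}\sqrt{|\psi(\xi)|}>0$; choosing $\rho_0\in(0,L)$, Lemma \ref{cndf-05} shows that $K^{d_\psi}(0,\rho_0)$ is Euclidean-bounded, so $\lambda(B^{d_\psi}(0,\rho_0))<\infty$. Iterating the doubling inequality \eqref{cndf-e20} $k$ times yields $\lambda(B^{d_\psi}(0,2^k\rho_0))\leq c_2^k\,\lambda(B^{d_\psi}(0,\rho_0))<\infty$, so all balls have finite volume; the same iteration gives a polynomial bound $\lambda(B^{d_\psi}(0,\rho))\leq c_2\,(\rho/\rho_1)^{\log_2 c_2}\,\lambda(B^{d_\psi}(0,\rho_1))$ for all $\rho\geq\rho_1$ and any reference radius $\rho_1>0$.

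Now I would apply this with reference radius $\rho_1=1/\sqrt t$ inside the (now unconditionally valid) formula \eqref{pto-e04}. For $r\geq 1$ one has $\sqrt{r/t}\geq 1/\sqrt t$ and $\lambda(B^{d_\psi}(0,\sqrt{r/t}))\leq c_2\,r^{(\log_2 c_2)/2}\,\lambda(B^{d_\psi}(0,1/\sqrt t))$, while for $0<r<1$ the inclusion $B^{d_\psi}(0,\sqrt{r/t})\subset B^{d_\psi}(0,1/\sqrt t)$ gives $\lambda(B^{d_\psi}(0,\sqrt{r/t}))\leq\lambda(B^{d_\psi}(0,1/\sqrt t))$. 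Splitting $\int_0^\infty$ at $r=1$ and using that $\int_1^\infty r^{(\log_2 c_2)/2}e^{-r}\,dr<\infty$ (a Gamma-type integral) produces the upper bound $p_t(0)\leq \kappa\,\lambda(B^{d_\psi}(0,1/\sqrt t))$ with $\kappa$ depending only on $n$ and $c_2$; in particular $p_t(0)<\infty$, i.e.\ $e^{-t\psi}\in L^1(\rn)$. The lower bound is easier: restricting the integral in \eqref{pto-e04} to $r\geq 1$ and using $B^{d_\psi}(0,1/\sqrt t)\subset B^{d_\psi}(0,\sqrt{r/t})$ gives $p_t(0)\geq (2\pi)^{-n}e^{-1}\lambda(B^{d_\psi}(0,1/\sqrt t))$. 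Together these establish \eqref{pto-e06}.

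The only genuinely delicate ingredient is the finiteness of all $d_\psi$-ball volumes: this is where being metric generating is essential, since it lets one start the doubling iteration from a ball that is Euclidean-bounded (hence has finite Lebesgue measure). Once this is in place, the remainder is a routine Laplace-type estimate in which the polynomial volume growth forced by doubling is absorbed by the weight $e^{-r}$; and since the doubling constant $c_2$ does not depend on the radius, all estimates are uniform in $t>0$.
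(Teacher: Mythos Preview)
Your proof is correct and follows essentially the same route as the paper: the layer-cake identity via Tonelli for \eqref{pto-e04}, the lower bound by restricting to $r\geq 1$ and monotonicity, and the upper bound by splitting at $r=1$ and using the polynomial volume growth forced by doubling against the weight $e^{-r}$. The one minor difference is that for the integrability $e^{-t\psi}\in L^1$ under doubling the paper invokes Corollary~\ref{cndf-23} together with an external reference \cite[Proposition~5]{KS1}, whereas you give a self-contained argument (finite volume of a small $d_\psi$-ball from $\psi\in\mcn(\rn)$ via Lemma~\ref{cndf-05}, then iterate doubling); this is a presentational improvement rather than a different strategy.
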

\begin{proof}
    Observe that $\lambda\left(B^{d_\psi}\big(0,\sqrt{\rho}\big)\right) = \lambda\left\{\xi\in\rn\::\: \psi(\xi) < \rho\right\}$ is the distribution function of $\psi$. Using Fubini's theorem we get
    \begin{align*}
        (2\pi)^n\,p_t(0)
        &= \int_\rn e^{-t\psi(\xi)}\,d\xi\\
        &= t\int_0^\infty \lambda\left(B^{d_\psi}\big(0,\sqrt{\rho}\big)\right)\,e^{-t\rho}\,d\rho\\
        &= \int_0^\infty \lambda\left(B^{d_\psi}\big(0,\sqrt{r/t}\big)\right)\,e^{-r}\,dr,
    \end{align*}
    and \eqref{pto-e04} follows.

    We know from Corollary \ref{cndf-23} that volume doubling implies power growth of the increasing rearrangement $\psi_* = v_\psi^{-1}$. By `d)$\Rightarrow$a)' of \cite[Proposition 5]{KS1} we get that $e^{-t\psi}\in L^1(\rn,\lambda)$.
    Using \eqref{pto-e04} and the monotonicity of the function $r\mapsto \lambda\left(B^{d_\psi}(0,r)\right)$ we get
    \begin{align*}
        (2\pi)^n\,p_t(0)
        &\geq \int_1^\infty \lambda\left(B^{d_\psi}\big(0,\sqrt{r/t}\big)\right) e^{-r}\,dr\\
        &\geq \lambda\left(B^{d_\psi}\big(0,1/\sqrt{t}\big)\right) \int_1^\infty e^{-r}\,dr\\
        &= \frac 1e\,\lambda\left(B^{d_\psi}\big(0,1/\sqrt{t}\big)\right).
    \end{align*}
    This proves the first inequality of \eqref{pto-e06}. The upper estimate requires that $\psi$ enjoys the volume doubling property. This means that
    $$
        \lambda\left(B^{d_\psi}(0,cr)\right)
        \leq \gamma_0(c)\,\lambda\left(B^{d_\psi}(0,r)\right), \quad c>1,\; r>0,
    $$
    for some function $\gamma_0$ such that $\gamma_0(c)\leq \gamma_0(1)\, c^\alpha$ for all $c\geq 1$ with some suitable constant $\alpha\geq 0$. Combining this with \eqref{pto-e04} gives
    \begin{align*}
        (2\pi)^n\,p_t(0)
        &= \int_0^1 \lambda\left(B^{d_\psi}\big(0,\sqrt{r/t}\big)\right) e^{-r}\,dr
          + \int_1^\infty \lambda\left(B^{d_\psi}\big(0,\sqrt{r/t}\big)\right) e^{-r}\,dr\\
        &\leq \left(1-e^{-1}\right) \lambda\left(B^{d_\psi}\big(0,1/\sqrt{t}\big)\right)
          + \lambda\left(B^{d_\psi}\big(0,1/\sqrt t\big)\right) \int_1^\infty \gamma_0(1) r^{\alpha/2}\,e^{-r}\,dr\\
        &= \kappa_1\,\lambda\left(B^{d_\psi}\big(0,1/\sqrt{t}\big)\right);
    \end{align*}
    this is the upper estimate in \eqref{pto-e06}.
\end{proof}

Let us illustrate Theorem \ref{pto-03} with several examples. First, however, we note that the estimate \eqref{pto-e06} indicates some kind of `Gaussian' behaviour of $p_t(0)$---it is comparable to the Lebesgue volume of a metric ball. This is exactly what we see in the Gaussian setting where $\psi(\xi)=\frac 12 |\xi|^2$ and
$
    p_t(x) = (2\pi t)^{-n/2}\,e^{-\frac{|x|^2}{2t}}.
$
Clearly,
$$
    p_t(0) = (2\pi t)^{-n/2}
    \quad\text{hence}\quad
    p_t(0) = c_n\,\lambda\left(B\big(0,1/\sqrt t\big)\right)
$$
where $B(0,r)$ stands for the usual ball in the Euclidean topology.
\begin{remark}\label{pto-05}
    Let $\psi\in\mcn(\rn)$ and denote by $p_t$ the corresponding transition density function (which we assume to exist). Let $(\eta_t)_{t\geq 0}$ be a convolution semigroup of measures on the half-line $[0,\infty)$ and denote by $f$ the corresponding Bernstein function $f$. The subordinate density $p_t^f$ is given by
    \begin{equation*}
        p_t^f(x) = \int_0^\infty p_s(x)\,\eta_t(ds),\quad t>0,
    \end{equation*}
    and, consequently, $p_t^f(0)=\int_0^\infty p_s(0)\eta_t(ds)$. By Theorem \ref{pto-03} we see
    $$
       p_t^f(0)
       \asymp \int_0^\infty \lambda\left(B^{d_\psi}\big(0,1/\sqrt{s}\big)\right)\eta_t(ds).
    $$

    Moreover, if $f\circ\psi$ has the volume doubling property, see e.g.\ Corollary \ref{cndf-25}, \eqref{pto-e06} gives
    \begin{equation*}
        \frac{p_t^f(0)}{\lambda\left(B^{d_{f\circ \psi}}\big(0,1/\sqrt t\big)\vphantom{B^{d_\psi}\big(0,\sqrt{f^{-1}(1/t)}\big)}\right)}
        = \frac{p_t^f(0)}{\lambda\left(B^{d_\psi}\big(0,\sqrt{f^{-1}(1/t)}\big)\right)}
        \asymp 1
    \end{equation*}
    and so
    \begin{equation*}
        \lambda\left(B^{d_{f\circ \psi}}\big(0,1/\sqrt t\big)\right)
        \asymp \int_0^\infty \lambda\left(B^{d_\psi}\big(0,1/\sqrt{s}\big)\right)\eta_t(ds).
    \end{equation*}
\end{remark}

\bigskip
\begin{example}\label{pto-07}
    Let $f$ be a Bernstein function. By Lemma \ref{cndf-05} we know that $\psi := f(|\cdot |^2)$ is in $\mcn(\rn)$, and from Proposition \ref{cndf-15} it follows that
    \begin{equation*}
        \lambda\left(B^{d_{f(|\cdot|^2)}}(0,cr)\right)
        = \left(\frac{f^{-1}(c^2 r^2)}{f^{-1}(r^2)}\right)^{n/2} \lambda\left(B^{d_{f(|\cdot|^2)}}(0,r)\right).
    \end{equation*}
    In particular, we get for $f(s)=s^\alpha$, $0<\alpha<1$, that
    $$
        \lambda\left(B^{d_{|\cdot|^{2\alpha}}}(0,cr)\right)
        = c^{n/\alpha} \lambda\left(B^{d_{|\cdot|^{2\alpha}}}(0,r)\right).
    $$
    Since $\lambda\left(B^{d_{|\cdot|^{2\alpha}}}(0,r)\right) = c_{n,\alpha}\,r^{n/\alpha}$ we recover the well-known estimates
    $$
        p_t^{(2\alpha)}(0) \asymp t^{-n/2\alpha}
    $$
    where $p_t^{(2\alpha)}(x)$ is the transition density of the symmetric $2\alpha$-stable L\'evy process. In fact, in this special case, we can calculate $p_t^{(2\alpha)}(0)$ exactly:
%
        $$
            p_t^{(2\alpha)}(0)
            = \alpha \Gamma\left(\tfrac n\alpha + 1\right) \, \lambda\left(B^{d_{|\cdot|^{2\alpha}}}\big(0,1/\sqrt t\big)\right).
        $$

    More generally, if $\psi\in\mcn$ then $f\circ\psi\in\mcn$ by Lemma \ref{cndf-05}. If $f\circ\psi$ has the doubling property, we get that $e^{-f\circ\psi}\in L^1(\rn,\lambda)$, cf.\ Theorem \ref{pto-03}, and
    \begin{equation}\label{pto-e22}
        p_t(0)
        \asymp
        \lambda\left(B^{d_\psi}\big(0,\sqrt{f^{-1}(1/t)}\big)\right)
    \end{equation}
    where $p_t(x) = p_t^{f\circ\psi}(x) = (2\pi)^{-n} \int e^{-ix\xi} e^{-t f(\psi(\xi))}\,d\xi$ is the transition density of the subordinate L\'evy process.
\end{example}

\bigskip
\begin{example}\label{pto-09}
    Consider on $\real^m\times\real^n$ the function $\psi(\xi,\eta)=|\xi|^\alpha + |\eta|^\beta$ with $0<\alpha<\beta<2$. Then $\psi\in\mcn(\real^m\times\real^n)$. It is shown in \cite{SL} that
    \begin{equation*}
        \lambda\left(B^{d_\psi}(0,R)\right)
        = \left(\frac Rr\right)^{2\left(\frac m\alpha + \frac n\beta\right)}
        \lambda\left(B^{d_\psi}(0,r)\right).
    \end{equation*}
    Consequently, we get for $p_t^\psi(0) = (2\pi)^{-n-m}\iint_{\Rm\times\rn} e^{-t(|\xi|^\alpha+|\eta|^\beta)}\,d\xi\,d\eta$ that
    $$
        p_t^\psi(0)
        \asymp
        t^{-\frac m\alpha - \frac n\beta}.
    $$
    Note that \eqref{pto-e22} controls both the growth of the singularity of $p_t^\psi(0)$ as $t\to 0$ and the decay of $p_t^\psi(0)$ as $t\to\infty$. Both controls are related to volume growth in the corresponding metric measure spaces. A further consequence of \eqref{pto-e22} is that if any two $\psi_1,\psi_2\in\mcn(\rn)$ are comparable in the sense that
    \begin{equation*}
        \tau_0\,d_{\psi_1}(\xi,\eta)
        \leq \,d_{\psi_2}(\xi,\eta)
        \leq \tau_1\,d_{\psi_1}(\xi,\eta),\quad \xi,\eta\in\rn,
    \end{equation*}
    for suitable constants $0<\tau_0\leq \tau_1<\infty$, then $p_t^{\psi_1}(0)$ and $p_t^{\psi_2}(0)$ have comparable growth behaviour as $t\to 0$ and $t\to\infty$. Nevertheless, we cannot expect any obvious comparison of $p_t^{\psi_1}(\xi)$ and $p_t^{\psi_2}(\xi)$ if $\xi\neq 0$.

    Take, for example, $\psi_1(\xi,\eta)=|\xi|+|\eta|$ and $\psi_2(\xi,\eta) = \sqrt{\xi^2+\eta^2}$ where $\xi,\eta\in\real$. Then
    $$
        \frac 1{\sqrt 2}\,(|\xi|+|\eta|)
        \leq \sqrt{|\xi|^2+|\eta|^2}
        \leq |\xi|+|\eta|.
    $$
    On the other hand, $\psi_1$ and $\psi_2$ have different smoothness properties near the origin 
    and it is this property that determines the decay of the transition functions $p_t^{\psi_1}(x,y)$ and $p_t^{\psi_2}(x,y)$ as $|x|+|y|\to\infty$. In fact, we have
    $$
        p_t^{\psi_1}(x,y) = \frac 1{\pi^2}\,\frac{t^2}{(x^2+t^2)(y^2+t^2)}
    $$
    and
    $$
        p_t^{\psi_2}(x,y) = \frac{1}{2\pi}\,\frac{t}{\big((x^2+y^2)+t^2\big)^{3/2}}
    $$
    which gives, for example, for $|x|\to\infty$,
    $$
        p_1^{\psi_1}(x,0) \asymp |x|^{-2}
        \quad\text{while}\quad
        p_1^{\psi_2}(x,0) \asymp |x|^{-3}.
    $$
\end{example}

\section{On the off-diagonal behaviour of $p_t(x)$}\label{ptx}

As in Section \ref{pto} we assume that $\psi\in\mcn(\rn)$ is real valued and that the associated convolution semigroup $(\mu_t)_{t>0}$ is absolutely continuous with respect to Lebesgue measure, $\mu_t(dx) = p_t(x)\,dx$. We have seen that $p_t(0)$ has a natural meaning in the metric measure space $(\rn,d_\psi,\lambda)$. We have
\begin{equation}\label{ptx-e04}
    \frac{p_t(x)}{p_t(0)}
    = \int_{\rn} e^{-ix\xi} \, \frac{e^{-t d_\psi^2(\xi,0)}}{(2\pi)^{n}\,p_t(0)}\,d\xi
    = \int_{\rn} e^{-ix\xi} \, \frac{e^{-t \psi(\xi)}}{(2\pi)^{n}\,p_t(0)}\,d\xi,
\end{equation}
and we want to understand $p_t(x)/p_t(0)$ better. For this we start with a few examples.
\begin{example}\label{ptx-03}
\textbf{a)} Let $\psi(\xi)=\frac 12 |\xi|^2$. Then $p_t(x)$ is the Gauss kernel in $\rn$,
\begin{equation*}
    \frac{p_t(x)}{p_t(0)} = \exp\left(-\frac{|x|^2}{2t}\right).
\end{equation*}
and for the (almost Euclidean) metric $\phi_t(x,y) = |x-y|/\sqrt{2 t}$ we get
\begin{equation*}
    \frac{p_t(x)}{p_t(0)} = \exp\left(-\phi_t^2(x,0)\right).
\end{equation*}

\bigskip\noindent\textbf{b)} Let $\psi(\xi)=|\xi|$. Then $p_t(x)$ is the density of the Cauchy process in $\rn$ and we find
\begin{equation*}
    \frac{p_t(x)}{p_t(0)} = \exp\left(-\phi_t^2(x,0)\right)
\end{equation*}
with
\begin{equation*}
    \phi_t(x,y) = \sqrt{ \frac{n+1}2\,\ln\left[\frac{|x-y|^2+t^2}{t^2}\right]}.
\end{equation*}
If we fix $t>0$, then $\phi_t(\cdot,\cdot):\rn\times\rn\to\real$ is a translation invariant metric. Symmetry, positivity and definiteness are obvious. The triangle inequality follows from the fact that $r\mapsto\ln\left(1+\frac ra\right)$ is a Bernstein function and that $\phi_t^2(x,0)$ is a continuous negative definite function.
\end{example}

Thus we are naturally led to the following question.
\begin{problem}\label{ptx-05}
    Let $p_t$ be the transition density of some symmetric L\'evy process with characteristic exponent $\psi\in\mcn(\rn)$. Does there exist a mapping $\delta_\psi:(0,\infty)\times\rn\times\rn\to\real$ such that for every $t\in (0,\infty)$ the map $\delta_{\psi,t}(\cdot,\cdot):\rn\times\rn\to\real$ is a (translation invariant) metric such that
    \begin{equation}\label{ptx-e14}
        \frac{p_t(x)}{p_t(0)} = \exp\left(-\delta_{\psi,t}^2( x,0)\right)\;?
    \end{equation}
\end{problem}

Below we will see many more concrete examples for which Problem \ref{ptx-05} can be answered in the affirmative. There is, however, a general result supporting the conjecture that \eqref{ptx-e14} should hold for all symmetric L\'evy processes with characteristic exponent $\psi\in\mcn(\rn)$.

The key observation is that the metric space $(\rn,d_\psi)$, $\psi\in\mcn(\rn)$, can be isometrically embedded into some Hilbert space. The following result is originally due to Schoenberg \cite{Schoe1,Schoe2} and it led to the notion of negative definite functions; a modern account is given in the monograph \cite{BL} by Benyamini and Lindenstrauss.
\begin{theorem}[Schoenberg]\label{ptx-06}
    Let $\psi\in\mcn(\rn)$. Then the metric measure space $(\rn,d_\psi)$ can be isometrically embedded into some Hilbert space $(\mathcal H,\scalar{\cdot,\cdot}_{\mathcal H})$.

    Conversely, if $J:(\rn,d)\to(\mathcal H,\scalar{\cdot,\cdot}_{\mathcal H})$ is an isometric embedding into some Hilbert space such that $\scalar{J(x),J(y)}_{\mathcal H} = \scalar{J(x-y),J(0)}_{\mathcal H}$, then $d=d_\psi$ for some negative definite function $\psi:\rn\to\real$.
\end{theorem}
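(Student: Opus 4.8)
The plan is to prove the forward direction by the classical Kolmogorov--Schoenberg construction, i.e.\ by realising the conditionally negative definite kernel $K(\xi,\eta):=\psi(\xi-\eta)=d_\psi^2(\xi,\eta)$ as a squared distance in a Hilbert space built directly out of $\psi$; this is the same mechanism that underlies the Dirichlet form / carr\'e du champ argument indicated in the introduction, merely stated without reference to the process. Recall that $\psi\in\mcn(\rn)$ is here real valued, even, non-negative, and vanishes at the origin (all elementary from the L\'evy--Khintchine representation recalled in Section~\ref{aux}). Set
\begin{equation*}
    Q(\xi,\eta):=\tfrac 12\bigl(\psi(\xi)+\psi(\eta)-\psi(\xi-\eta)\bigr),\qquad \xi,\eta\in\rn ,
\end{equation*}
which is the kernel one wants to realise as $\scalar{J(\xi),J(\eta)}_{\mathcal H}$ once $J(0)$ is declared to be $0$.

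The one substantial step is to show that $Q$ is a positive semidefinite kernel, i.e.\ $\sum_{j,k=1}^m c_j\overline{c_k}\,Q(\xi_j,\xi_k)\geq 0$ for all $m\in\nat$, all $\xi_1,\dots,\xi_m\in\rn$ and all $c_1,\dots,c_m\in\comp$; this is the only point where negative definiteness of $\psi$ enters, and it is the main obstacle in the sense of being the only non-formal ingredient. One adjoins the base point $\xi_0:=0$ together with the coefficient $c_0:=-\sum_{j=1}^m c_j$, so that $\sum_{j=0}^m c_j=0$ and the defining inequality $\sum_{j,k=0}^m c_j\overline{c_k}\,\psi(\xi_j-\xi_k)\leq 0$ for negative definite functions applies; expanding it and using $\psi(0)=0$ and $\psi(-\xi)=\psi(\xi)$ turns the left-hand side into exactly $-2\sum_{j,k=1}^m c_j\overline{c_k}\,Q(\xi_j,\xi_k)$, which gives the claim. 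Equivalently, $Q(\xi,\eta)=\Gamma(e_\xi,e_{-\eta})(0)$ for the exponentials $e_\xi(x)=e^{ix\cdot\xi}$, and this positivity is precisely the positivity $\Gamma(u,u)\geq 0$ of the carr\'e du champ of the L\'evy process.

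Granting the positivity of $Q$, the remainder is the standard GNS packaging: equip the free complex vector space on the symbols $\{[\xi]:\xi\in\rn\}$ with the sesquilinear form $\bigl\langle\sum a_\xi[\xi],\,\sum b_\eta[\eta]\bigr\rangle:=\sum_{\xi,\eta}a_\xi\overline{b_\eta}\,Q(\xi,\eta)$, pass to the quotient by its null space, complete to a Hilbert space $\mathcal H$, and let $J(\xi)$ be the class of $[\xi]$. Since $Q(\xi,\xi)=\psi(\xi)$,
\begin{equation*}
    \nnorm{J(\xi)-J(\eta)}_{\mathcal H}^2
    = Q(\xi,\xi)-Q(\xi,\eta)-Q(\eta,\xi)+Q(\eta,\eta)
    = \psi(\xi-\eta)
    = d_\psi^2(\xi,\eta),
\end{equation*}
so $J$ is an isometric embedding; it is automatically injective because $\psi$ is non-periodic, so that $d_\psi$ genuinely separates points. (Continuity of $\psi$ is not needed for the construction itself, but ensures that $J$ is continuous.)

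For the converse, let $J:(\rn,d)\to(\mathcal H,\scalar{\cdot,\cdot}_{\mathcal H})$ be isometric with $\scalar{J(x),J(y)}_{\mathcal H}=\scalar{J(x-y),J(0)}_{\mathcal H}$, and put $g(\xi):=\scalar{J(\xi),J(0)}_{\mathcal H}$, so that $g(0)=\nnorm{J(0)}_{\mathcal H}^2\geq 0$. The hypothesis reads $\scalar{J(x),J(y)}_{\mathcal H}=g(x-y)$; thus the Gram kernel $(x,y)\mapsto\scalar{J(x),J(y)}_{\mathcal H}$, which is automatically positive semidefinite, is translation invariant, and this is exactly the statement that $g$ is a positive definite function on $\rn$. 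In particular $\overline{g(\xi)}=g(-\xi)$, hence $\Re g$ is again positive definite and $\Re g(\xi)\leq |g(\xi)|\leq g(0)$; moreover $\nnorm{J(x)}_{\mathcal H}^2=g(0)$ for every $x$. Expanding the squared distance,
\begin{equation*}
    d^2(x,y)=\nnorm{J(x)-J(y)}_{\mathcal H}^2 = 2g(0)-2\Re g(x-y),
\end{equation*}
so $d(x,y)=\sqrt{\psi(x-y)}$ with $\psi(\xi):=2\bigl(g(0)-\Re g(\xi)\bigr)\geq 0$ and $\psi(0)=0$. Finally $\psi$ is negative definite: whenever $\sum_j c_j=0$, positive definiteness of $\Re g$ gives $\sum_{j,k}c_j\overline{c_k}\,\psi(\xi_j-\xi_k)=-2\sum_{j,k}c_j\overline{c_k}\,\Re g(\xi_j-\xi_k)\leq 0$. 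Hence $d=d_\psi$. Note that the $\psi$ produced here is bounded; this is why the translation-compatibility assumption on $J$ is genuinely restrictive and the two halves of the theorem are not exact inverses. Apart from this positivity/carr\'e du champ step, everything (the GNS completion, the expansion of squared norms, and the passage between negative definite functions and translation-invariant positive semidefinite kernels) is routine bookkeeping.
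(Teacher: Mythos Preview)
Your proof is correct. The forward direction is the same mechanism as the paper's sketch---build a Hilbert space from a positive semidefinite form and send $\xi$ to the corresponding generator---only packaged abstractly: you work directly with the Schoenberg kernel $Q(\xi,\eta)=\tfrac12\bigl(\psi(\xi)+\psi(\eta)-\psi(\xi-\eta)\bigr)$ and the GNS completion, whereas the paper realises the very same inner product concretely as the carr\'e du champ $\Gamma(\cdot,\cdot)(0)$ of the associated Dirichlet form and embeds via $\xi\mapsto e_\xi$. You even identify the two (your remark $Q(\xi,\eta)=\Gamma(e_\xi,e_{-\eta})(0)$). The abstract packaging buys you a little generality: the paper's sketch assumes, to make $\Gamma(\cdot,\cdot)(0)$ an inner product on the quotient, that $\psi$ has no Gaussian part and that the L\'evy measure has full support, whereas your argument needs nothing beyond $\psi\in\mcn(\rn)$. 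You also supply a clean proof of the converse (and the useful observation that the $\psi$ it produces is bounded), which the paper does not attempt.
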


Let us give a \emph{sketch of the proof} of the sufficiency using the theory of Dirichlet forms, see \cite{FOT} or \cite{J1}. Our proof will also reveal the structure of the embedding. In order to exclude trivial cases, we will assume that $\psi$ has no quadratic part. Therefore, the L\'evy-Khintchine formula \eqref{aux-e13} becomes
$$
    \psi(\xi)
    = \int_{y\neq 0} \left(1-\cos(y\xi)\right)\nu(dy).
$$

We can associate with every continuous negative definite function $\psi:\rn\to\real$ a pseudo-differential operator $\psi(D)$ on $\mathscr S(\rn)$ defined by
\begin{equation*}
    \psi(D)u(x)
    = \int_\rn e^{ix\xi}\psi(\xi)\,\Ff u(\xi)\,d\xi.
\end{equation*}
Introducing the scale of Hilbert spaces $H^{\psi,s}(\rn)$, $s\geq 0$,
\begin{equation*}
\begin{gathered}
    H^{\psi,s}(\rn) := \left\{u\in L^2(\rn) \::\: \nnorm{u}_{\psi,s}<\infty\right\},\\
    \nnorm{u}_{\psi,s}^2 := \int_\rn (1+\psi(\xi))^s |\Ff u(\xi)|^2\,d\xi,
\end{gathered}
\end{equation*}
it is easy to see that $\psi(D):H^{\psi,s+2}(\rn)\to H^{\psi,s}(\rn)$ is continuous and that the quadratic form $\mathcal E^\psi$ associated with $\psi(D)$ by
\begin{equation*}
    \mathcal E^\psi(u,v)
    = \int_\rn \psi(D)^{1/2}u(x) \psi(D)^{1/2}v(x)\,dx
    = (2\pi)^n \int_\rn \psi(\xi)\,\Ff u(\xi) \, \overline{\Ff v(\xi)}\,d\xi
\end{equation*}
is closed on $D(\mathcal E^\psi) = H^{\psi,1}(\rn)$. Using the L\'evy-Khintchine representation we get
\begin{equation*}
    \mathcal E^\psi(u,v)
    = \frac 12 \iint_{\rn\times\rn\setminus\{0\}} \big(u(x+y)-u(x)\big)\big(v(x+y)-v(x)\big)\,\nu(dy)\,dx.
\end{equation*}
We call the operator $(u,v)\mapsto\Gamma(u,v)$ where
\begin{equation*}
    \Gamma(u,v)(x)
    = \frac 12 \int_{\rn\setminus\{0\}} \big(u(x+y)-u(x)\big)\big(v(x+y)-v(x)\big)\,\nu(dy)
\end{equation*}
the \emph{carr\'e du champ operator} associated with the Dirichlet form $\mathcal E^\psi$. For a comprehensive discussion of the carr\'e du champ operator we refer to \cite{BH}.

Consider the set
\begin{equation*}
    \mathcal C_\psi
    := \big\{ u\in C^2(\rn)\::\: \Gamma(u,u)(0)<\infty\big\} \Big/ \big\{ u\in C^2(\rn)\::\: \Gamma(u,u)(0)=0\big\}
\end{equation*}
and notice that all constants belong to the set $\big\{ u\in C^2(\rn)\::\: \Gamma(u,u)(0)=0\big\}$.

If the L\'evy measure of $\psi$ has full support, $\supp\nu = \rn$---e.g.\ if $\nu = g\lambda$ with an everywhere strictly positive density $g>0$---, then $\Gamma(u,v)(0)$ is a scalar product on $\mathcal C_\psi$. Let us denote this scalar product, for a moment, by
$$
    \scalar{u,v}_{\mathcal H} := \Gamma(u,v)(0).
$$
Then $\mathcal C_\psi = \big\{ u\in C^2(\rn)\::\: \Gamma(u,u)(0)<\infty\big\} \big/ \big\{ u\equiv\text{const}\big\}$. (Since $\nu$ has full support it is not hard to see that $\big\{ u\in C^2(\rn)\::\: \Gamma(u,u)(0)=0\big\}$ are exactly the constant functions). The completion of $\mathcal C_\psi$ with respect to  $\scalar{\cdot,\cdot}_{\mathcal H}$ gives a Hilbert space $(\mathcal H,\scalar{\cdot,\cdot}_{\mathcal H})$. Each function $e_\xi$, $e_\xi(x):= e^{ix\xi}$, $\xi\in\rn$, represents some element of $\mathcal H$ and we get
\begin{equation*}
    \Gamma(e_\xi,e_\xi)(0) = \psi(\xi).
\end{equation*}
This shows that the map $J:\rn\to\mathcal H$, $\xi\mapsto e_\xi$, embeds the metric space $(\rn,d_\psi)$ isometrically into the Hilbert space $(\mathcal H, \scalar{\cdot,\cdot}_{\mathcal H})$.

\bigskip
Thus, we can understand $e^{-t\psi(\xi)}$ as a \emph{Gaussian in disguise}: Fourier transforms of Gaussians should be Gaussians, and Gaussians have obviously the proposed structure. It is tempting to find a representation of $p_t(x)$ as some image of an infinite dimensional Gaussian defined somehow on $(\mathcal H,\scalar{\cdot,\cdot}_{\mathcal H})$. So far, however, such a result resists all of our attempts to prove it.

Before we return to Problem \ref{ptx-05} and before we provide more examples, we want to give an interpretation of \eqref{ptx-e04}.

We have
\begin{equation*}
    p_t(x) = p_t(0) \exp\left(-\delta_{\psi,t}^2( x,0))\right)
\end{equation*}
and if $\psi$ satisfies the volume doubling condition, we get from Theorem \ref{pto-03}
\begin{equation*}
    p_t(x)  \asymp
    \lambda\big(B^{d_\psi}(0,1/\sqrt t)\big) \exp\big(-\delta_{\psi,t}^2( x,0))\big)
\end{equation*}
with  the balls $B^{d_\psi}(0,r)=\left\{y\in\rn \::\: d_\psi(y,0)<r \right\}$. Thus, if $(\rn,d_\psi,\lambda)$ has the volume doubling property, $p_t(x)$ is controlled by two geometric expressions. More precisely, for fixed $t>0$ we have two metrics $\delta_{\psi,t}(\cdot,\cdot)$ and $d_\psi(\cdot,\cdot)$ which describe the behaviour of $p_t(x)$. The situation becomes more transparent if we switch from $d_\psi(x,y)=\sqrt{\psi(x-y)}$ to $d_{\psi,t} : \rn\times\rn\to\real$, $t>0$, $d_{\psi,t}(x,y)= \sqrt{t\cdot\psi(x-y)}$ since, in this new metric,
\begin{align*}
    B^{d_\psi}\big( 0,\tfrac 1{\sqrt t}\big)
    &= \left\{ y\in\rn \::\: d_\psi(y,0)<\tfrac 1{\sqrt t}\right\}\\
    &= \left\{ y\in\rn \::\: d_{\psi,t}( y,0)<1\right\}
    =: B^{d_{\psi,t}}(0,1).
\end{align*}
From now on we will adopt the point of view that $p_t(x)$ should be understood in terms of two families of metrics, $d_{\psi,t}( \cdot, \cdot)$ and $\delta_{\psi,t}(\cdot,\cdot)$, by the estimates
\begin{equation*}
    p_t(x) \asymp \lambda\left(B^{d_{\psi,t}}(0,1)\right) \exp\left(-\delta_{\psi,t}^2(x,0)\right).
\end{equation*}
Consequently, the understanding of $p_t(x)$ is reduced to the study of $d_{\psi,t}(\cdot,\cdot)$ and $\delta_{\psi,t}(\cdot,\cdot)$. For a Brownian motion $\psi(\xi)=\frac 12|\xi|^2$; in this case both metrics $\delta_{\psi,t}$ and $d_\psi$ are (essentially) Euclidean distances. This special situation is related to the fact that the Gaussian is, up to constants, a fixed point of the Fourier transform.

This interpretation allows us also to think about distributions of random variables or collections of densities $p_t$, $t\in I\subset (0,\infty)$. We will follow up this remark in the next section.

Of special interest is the case when \eqref{ptx-e14} holds with a metric $\delta_{\psi,t}$ such that $x\mapsto \delta_{\psi,t}^2(x,0)$ is a negative definite function. This is, e.g.\ the case in Example \ref{ptx-03}. Here is a further example.
\begin{example}\label{ptx-07}
    The symmetric Meixner process on $\real$ has the characteristic L\'evy exponent $\psi(\xi) = \ln(\cosh \xi)$ and the transition density
    \begin{equation*}
        p_t(x) = \frac{2^{t-1}}{\pi\,\Gamma(t)}\,\left|\Gamma\left(\frac{t+ix}{2}\right)\right|^2.
    \end{equation*}
    Using the representation of the Gamma function as an infinite product, we find
    \begin{equation*}
        \frac{p_t(x)}{p_t(0)}
        = \left|\frac{\Gamma\left(\frac{t+ix}{2}\right)}{\Gamma\left(\frac{t}{2}\right)}\right|^2
        = \prod_{j=1}^\infty \left(1+\frac{x^2}{(t+2j)^2}\right)^{-1}.
    \end{equation*}
    Since for a sequence $(a_j)_{j\geq 1}$ of positive numbers the convergence of
    $$
        \prod_{j=1}^\infty (1+a_j),\quad
        \sum_{j=1}^\infty \ln(1+a_j)\quad\text{and}\quad
        \sum_{j=1}^\infty a_j
    $$
    is equivalent, we find that
    \begin{equation*}
        \delta^2_t(x,0)
        = - \ln \left|\frac{\Gamma\left(\frac{t+ix}{2}\right)}{\Gamma\left(\frac{t}{2}\right)}\right|^2
        = \sum_{j=1}^\infty \ln\left(1+\frac{x^2}{(t+2j)^2}\right).
    \end{equation*}
    For every $j\geq 1$ and $t>0$ the function $x\mapsto \ln\big(1+x^2(t+2j)^{-2}\big)$ is continuous and negative definite. Since the series $\sum_{j=1}^\infty \ln\big(1+x^2(t+2j)^{-2}\big)$ converges locally uniformly as a function of $x$, its sum $\delta^2_t(x,0)$ is again a continuous negative definite function. This shows that the transition density of a symmetric Meixner process satisfies
    \begin{equation}\label{ptx-e36}
        p_t(x) = p_t(0) \, \exp\left(-\delta^2_t(x,0)\right)
    \end{equation}
    and $p_t(0)$ can be written, as before, as
    \begin{equation*}
        p_t(0)
        = (2\pi)^{-n}\int_\rn e^{-\frac t2 \ln\left(\cosh^2\xi\right)}\,d\xi
        = (2\pi)^{-n}\int_\rn e^{- t \ln\left(\cosh\xi\right)}\,d\xi.
    \end{equation*}

    Although $x\mapsto\delta^2_t(x,0)$ is a continuous negative definite function, $e^{-\delta_t^2(x,0)}$ is \emph{not} the characteristic function of an additive process, i.e.\ a stochastically continuous process with independent, but not necessarily stationary, increments, see \cite{Sato}. This can be seen from the the L\'evy-Khintchine representation for $\delta_t^2(x,0)$:
\begin{align*}
    \delta_t^2(x,0)
    &=\sum_{j=0}^\infty \ln \left(1+\frac{x^2}{(t+2j)^2}\right)\\
    &=\int_{\real\setminus\{0\}} \frac{1-\cos(xz)}{|z|} \left( \sum_{j=0}^\infty e^{-|z|(t+2j)}\right) dz\\
    &=\int_{\real\setminus\{0\}} \bigl(1-\cos(xz)\bigr)\,g(t,z)\,dz
\end{align*}
    with $g(t,z):=\sum_{j=0}^\infty e^{-|z|(t+2j)}$. In this calculation we used the L\'evy-Khintchine representation for the continuous negative definite function
$$
    \ln \left(1+\frac{\xi^2}{a^2}\right)
    =\int_{\real\setminus\{0\}} \bigl(1-\cos(\xi z)\bigr)\,\frac{e^{-|z|a }}{|z|}\,dz.
$$
    Since  $g(t,z)$ is decreasing in $t$, we see that $e^{-\delta_t^2(x,0)}$ cannot be the characteristic function of an additive process.
\end{example}

In the next  Sections  \ref{dist} and \ref{proc} we will continue our investigation of \eqref{ptx-e36}. In fact, we will encounter a large class of processes with transition function
$$
    p_t(x) = p_t(0) e^{-g(t;|x|^2)}
$$
where for every fixed $t>0$ the function $g(t;\cdot)$ is a Bernstein function.

We want to collect some more information on $p_t$ and $p_t/p_t(0)$. Since $p_t$ is the Fourier transform of a measure, it is a positive definite function. The measure $\rho_t$ defined by
\begin{equation*}
    \rho_t(dx) := \frac{\Ff^{-1}p_t(x)}{p_t(0)}\,dx
\end{equation*}
is a probability measure and if $X_t$ is a random variable with distribution $\rho_t$, we have
$$
    \Pp(X_t\in dx) = \rho_t(dx) = \frac{e^{-t\psi(x)}}{p_t(0)}\,dx.
$$
Thus, we have a certain duality. Given a (symmetric) L\'evy process $(Y_t^\psi)_{t\geq 0}$ with characteristic exponent $\psi\in\mcn(\rn)$, then $Y_t^\psi$ is for every $t>0$ associated with $X_t$ and vice versa. Of particular interest should be the case where $(X_t)_{t\geq 0}$ is itself a `nice' process, say an additive process or even a L\'evy process. In Section \ref{proc} we will provide some examples for such pairings.

Our starting point was to understand the densities of L\'evy processes in terms of two one-parameter families of metrics, $(d_{\psi,t}(\cdot,\cdot))_{t>0}$ and $(\delta_{\psi,t}(\cdot,\cdot))_{t>0}$. Although this is still the main aim of our study, it is often useful to fix $t=t_0$ and to consider a single probability density $p(x) = p_{t_0}(x)$ rather than the whole family $(p_t(x))_{t>0}$. This is no loss of generality since we can embed every infinitely divisible probability density $p(x)$ into a convolution semigroup $(\mu_t)_{t\geq 0}$ such that $\mu_{t_0}(dx) = p(x)\,dx$. In the following section and in Section \ref{proc} we will study examples of infinitely divisible probability densities.

\section{Examples of class $\Nsf$ distributions}\label{dist}

The following definition covers all cases mentioned in Problem \ref{ptx-05} where the exponent $\delta_{\psi,1}(x,0)$ appearing in \eqref{ptx-e14} is not only a metric but a metric induced by some negative definite function.
\begin{definition}\label{dist-03}
    By $\Nsf$ we denote the class of infinitely divisible probability densities $p(x)$ on $\rn$, such that $\Ff^{-1} p(\xi)/p(0)$ is again an infinitely divisible probability density.
\end{definition}
Note that $\Ff^{-1} p(\xi) = e^{-\phi(\xi)}$ is always an \emph{infinitely divisible characteristic function} with a continuous negative definite characteristic exponent $\phi:\rn\to\comp$.  This is different from saying that $\Ff^{-1} p(\xi)/p(0)$ is an \emph{infinitely divisible probability distribution}; $p\in \Nsf$ means that $-\ln \bigl[p(x)/p(0)\bigr]$ is a continuous negative definite function. In particular, all distributions in the class $\Nsf$ provide solutions to Problem \ref{ptx-05}.

In order to get examples of class $\Nsf$ distributions we start with a simple class of examples related to a single infinitely divisible random variable $X$ on $\rn$ with an (infinitely divisible) probability density $p(x)$ which is symmetric. Then
$$
    \Ee e^{i\xi X}
    = \Ff^{-1} p(\xi)
    = e^{-\phi(\xi)}
$$
where $\phi:\rn\to\real$ is a continuous negative definite function. Suppose that $p$ is \emph{extended (Fourier) self-reciprocal} in the sense  that there is a constant $\gamma>0$ and a non-degenerate matrix $C\in\real^{n\times n}$, $\text{det}(C)>0$, such that
\begin{equation*}
    \Ff^{-1}p(\xi) = \frac 1\gamma\,p\big( C\xi\big)\quad\text{for all}\quad \xi\in\rn.
\end{equation*}
It follows that $p(0)=\gamma$ and
\begin{equation*}
    p(\xi) = p(0)\,e^{-d_\phi^2(C^{-1}\xi,0)}\quad\text{for all}\quad \xi\in\rn,
\end{equation*}
where $d_\phi(\xi,\eta) := \sqrt{\phi(\xi-\eta)}$ is the metric induced by the continuous negative function $\phi$.
\begin{example}\label{dist-05}
\textbf{a)}
    For every $\sigma>0$ the normal distribution $N(0,\sigma^2)$ is extended Fourier self-reciprocal and we have
    $$
        \frac 1{(2\pi\sigma^2)^{n/2}}\, \exp\left(-\frac{|\xi|^2}{2\sigma^2}\right)
        = \exp\left(-\frac 12\,\sigma^2\,|\xi|^2\right)
        \quad\text{for all}\quad \xi\in\rn.
    $$

\medskip\noindent\textbf{b)}
    The one-dimensional symmetric Meixner process $(X_t^M)_{t\geq 0}$ has $\psi(\xi) = \ln(\cosh\xi)$, $\xi\in\real$, as characteristic exponent. Denote by $(p_t(x)\,dx)_{t>0}$ the corresponding convolution semigroup.  For $t=1$ we  have, see \cite{PY03},
    $$
        p_1(x) = \frac{1}{2\cosh\big(\frac 12\,\pi x \big)}
        \quad\text{and}\quad
        \Ff^{-1} p_1(\xi) = \frac 1{\cosh \xi}.
    $$

\medskip\noindent\textbf{c)}
    Denote by $K_\lambda$ the modified Bessel function of the third kind, cf.\ \cite[vol.\ 2]{erd-et-al}. Let $Q>0$ and $\kappa>0$ be two positive numbers. Then
    $$
        p(x)
        := \frac{\kappa^{1/4}}{\sqrt{2\pi} \, K_{1/4}(\kappa^2)} \, \frac{K_{-1/4}\big(\kappa\sqrt{\kappa^2 + (x/Q)^2}\big) }{\big(\kappa^2 + (x/Q)^2\big)^{n/8}}
    $$
    is the density of a generalized hyperbolic distribution on $\real$ which is known to be infinitely divisible, see \cite{BNH} and \cite{BKS}. Then
    $$
        \Ff^{-1} p(\xi)
        = \frac{\kappa^{1/4}}{K_{1/4}(\kappa^2)} \, \frac{K_{1/4}\big(\kappa\sqrt{\kappa^2 + (Qx)^2}\big) }{\big(\kappa^2 + (Qx)^2\big)^{n/8}}.
    $$
    Since $K_{1/4} = K_{-1/4}$, we see that generalized hyperbolic densities are extended Fourier self-reciprocal.
\end{example}

Examples \ref{dist-05} a) and c) are special cases of so-called \emph{normal variance-mean mixtures}. The following definition is taken from \cite{BKS}.
\begin{definition}\label{dist-07}
    An $n$-dimensional random vector $Z$ is called a \emph{normal var\-iance-mean mixture},  if $Z$ is an $n$-dimensional normal distribution with covariance $Y Q$ and mean vector $\mu+Y \beta$, where $Q\in\real^{n\times n}$ is a symmetric positive definite matrix, $\mu$ and $\beta$ are $n$-dimensional matrices, and $Y$ is a positive random variable with probability law $\rho(ds)$ on $[0,\infty)$;  $\rho$ is called the \emph{mixing (probability) distribution}.
\end{definition}
Since we are mainly interested in the symmetric case, we assume from now that $\mu=\beta=0$. Denote by $p^\rho(dx)$ the law of $Z$; by definition it is the mixture, in the sense of \eqref{aux-e26}, of the Gaussian density $ p(s,x)=(2\pi s)^{-n/2} e^{-\frac{x \cdot Q^{-1} x}{2s}}$, $Q\in\real^{n\times n}$ is positive definite, and the probability measure $\rho(ds)$. If the  law $p^\rho(dx)$ is absolutely continuous with respect to $n$-dimensional Lebesgue measure, the density $p^\rho(x)$ is given by
\begin{equation}\label{dist-e06}
    p^\rho(x)=\int_0^\infty (2\pi s)^{-n/2} \exp\left[ -\frac{1}{2s}\, x\cdot Q^{-1}x\right] \rho(ds), \quad x\in\rn.
\end{equation}
Set $\phi(\lambda):=\mathcal{L} \rho(\lambda)$. Then we can calculate the characteristic function of $Z$ as a composition of the characteristic functions of a symmetric Gaussian distribution and $\phi$,
\begin{equation}\label{dist-e08}
    \Ff^{-1}[p^\rho](\xi)= \phi\left(-\frac{1}{2} \xi \cdot Q \xi\right).
\end{equation}
In general, we do not assume that the mixing probability measure $\rho$ is infinitely divisible.

The following theorem from \cite{BKS} gives sufficient conditions when normal variance-mean mixture is Fourier self-reciprocal. Its proof follows directly from the representation \eqref{dist-e06}.
\begin{theorem}\label{dist-09}
    Let $p^\rho(x)$ be a normal variance-mean mixture given by \eqref{dist-e06} and assume that the mixing probability measure $\rho(ds)=\rho(s)\,ds$ is absolutely continuous with respect to Lebesgue measure on $[0,\infty)$. If the density $\rho(s)$, $s\geq 0$, satisfies
\begin{equation}\label{dist-e09}
    \rho(s)=s^{(n-4)/2}\, \rho\left(\frac{1}{s}\right),
\end{equation}
    then $p^\rho=c \, \Ff^{-1}[p^\rho]$  with $c = p^\rho(0)$.
\end{theorem}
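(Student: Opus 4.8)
The plan is to prove the identity by a single substitution, $s\mapsto 1/s$, in the integral representation \eqref{dist-e06}; the hypothesis \eqref{dist-e09} is exactly what is needed to return the transformed integrand to the shape of $\Ff^{-1}[p^\rho]$. For transparency I would first treat the isotropic case $Q=I$, where $p^\rho(x)=\int_0^\infty(2\pi s)^{-n/2}e^{-|x|^2/2s}\rho(s)\,ds$. A preliminary observation, obtained by Fubini applied to \eqref{dist-e06} (licit once $p^\rho(0)=\int_0^\infty(2\pi s)^{-n/2}\rho(s)\,ds<\infty$, which holds because $p^\rho$ is assumed to be a genuine density), is that the characteristic function of $Z$ is the Gaussian mixture
\begin{equation*}
    \Ff^{-1}[p^\rho](\xi)=\int_0^\infty e^{-\frac s2|\xi|^2}\rho(s)\,ds ,
\end{equation*}
which is the content of \eqref{dist-e08}.

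For the core step, in $p^\rho(\xi)=\int_0^\infty(2\pi s)^{-n/2}e^{-|\xi|^2/2s}\rho(s)\,ds$ substitute $s=1/u$, so that $ds=-u^{-2}\,du$ and $(2\pi s)^{-n/2}=(2\pi)^{-n/2}u^{n/2}$; this turns the integral into
\begin{equation*}
    p^\rho(\xi)=(2\pi)^{-n/2}\int_0^\infty u^{(n-4)/2}\,\rho\!\left(\tfrac1u\right)e^{-\frac u2|\xi|^2}\,du .
\end{equation*}
By \eqref{dist-e09}, $u^{(n-4)/2}\rho(1/u)=\rho(u)$, so the integrand collapses to $(2\pi)^{-n/2}\rho(u)e^{-\frac u2|\xi|^2}$ and, comparing with the formula for $\Ff^{-1}[p^\rho]$ above, we read off $p^\rho(\xi)=(2\pi)^{-n/2}\,\Ff^{-1}[p^\rho](\xi)$. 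The constant is then pinned down at $\xi=0$: since $\Ff^{-1}[p^\rho](0)=\int_\rn p^\rho(x)\,dx=1$, we get $c:=p^\rho(0)=(2\pi)^{-n/2}$, so $p^\rho=c\,\Ff^{-1}[p^\rho]$ as claimed; note in passing that \eqref{dist-e09} forces $p^\rho(0)=(2\pi)^{-n/2}$. For a general symmetric positive definite $Q$ the same computation goes through with $|\xi|^2$ replaced by the quadratic forms $\xi\cdot Q\xi$ and $\xi\cdot Q^{-1}\xi$, and one obtains the \emph{extended} self-reciprocity $\Ff^{-1}[p^\rho](\xi)=p^\rho(0)^{-1}\,p^\rho(Q\xi)$ in the sense of Example~\ref{dist-05}, with $p^\rho(0)=(2\pi)^{-n/2}(\det Q)^{-1/2}$.

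I do not expect a substantive obstacle here — the authors rightly call this a direct consequence of \eqref{dist-e06} — so the only care required is bookkeeping: one must verify that $p^\rho$ is a finite, a.e.\ defined density, so that the interchange of the $s$-integral with the Fourier integral and the change of variables $s\mapsto 1/s$ are both justified by Tonelli/Fubini, and one must keep track of the $(\det Q)^{-1/2}$ normalisation of the Gaussian kernels in the anisotropic case (it cancels in the ratio $p^\rho/\Ff^{-1}[p^\rho]$, so the conclusion is unaffected). From the identity just obtained it follows immediately that $-\ln[p^\rho(x)/p^\rho(0)]=-\ln\Ff^{-1}[p^\rho](x)$ is a continuous negative definite function — the log-characteristic exponent of $Z$ — so every normal variance-mean mixture satisfying \eqref{dist-e09} belongs to the class $\Nsf$ and, in particular, provides a solution of Problem~\ref{ptx-05}.
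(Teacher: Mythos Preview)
Your argument is correct and is precisely the computation the paper has in mind: the authors do not spell out a proof but simply say that it ``follows directly from the representation \eqref{dist-e06}'', i.e.\ the substitution $s\mapsto 1/s$ together with \eqref{dist-e09}. Your additional remark that for general $Q$ one only obtains the \emph{extended} self-reciprocity $\Ff^{-1}[p^\rho](\xi)=p^\rho(0)^{-1}p^\rho(Q\xi)$ is a useful sharpening of the statement as written.
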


For our purposes the following simple corollary is important.
\begin{corollary}\label{dist-11}
    If the probability density $p^\rho(x)$, $x\in\rn$, is infinitely divisible and Fourier self-reciprocal, then  $p^\rho \in \Nsf$.
\end{corollary}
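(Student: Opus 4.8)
The plan is to unwind the definition of the class $\Nsf$ and the meaning of Fourier self-reciprocity, and then to combine them with the classical fact (recalled in Section~\ref{aux}) that the characteristic function of an infinitely divisible law has the form $e^{-\phi}$ with $\phi$ continuous negative definite. The underlying point is very simple: for a Fourier self-reciprocal density the normalised density $p^\rho/p^\rho(0)$ \emph{is} the characteristic function $\Ff^{-1}[p^\rho]$, so the infinite divisibility assumed for $p^\rho$ translates immediately into the negative definiteness of $-\ln\bigl[p^\rho/p^\rho(0)\bigr]$, which is exactly what membership in $\Nsf$ demands.

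First I would record the consequences of infinite divisibility. Since $p^\rho$ is an infinitely divisible (and, in the symmetric setting of \eqref{dist-e06}, even) probability density, its characteristic function $\Ff^{-1}[p^\rho]$ is a real-valued, positive definite function with $\Ff^{-1}[p^\rho](0)=1$; being infinitely divisible it can be written as $\Ff^{-1}[p^\rho](\xi)=e^{-\phi(\xi)}$ for a continuous negative definite function $\phi:\rn\to\real$ with $\phi(0)=0$. In particular $\phi\geq 0$, so that $e^{-\phi}$ takes values in $(0,1]$; equivalently, $p^\rho=\Ff[e^{-\phi}]$ is positive definite and hence $0<p^\rho(x)\leq p^\rho(0)$, which makes $-\ln\bigl[p^\rho(x)/p^\rho(0)\bigr]$ a well-defined nonnegative function.

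Next I would use Fourier self-reciprocity. By assumption (cf.\ the conclusion of Theorem~\ref{dist-09}) one has $p^\rho=c\,\Ff^{-1}[p^\rho]$ for some constant $c$; evaluating at $\xi=0$ and using $\Ff^{-1}[p^\rho](0)=1$ forces $c=p^\rho(0)$, and since we are in the plain (not ``extended'') self-reciprocal case no dilation matrix is present. Combining this with the previous step,
\[
    \frac{p^\rho(\xi)}{p^\rho(0)}=\Ff^{-1}[p^\rho](\xi)=e^{-\phi(\xi)},\qquad \xi\in\rn ,
\]
so that $-\ln\bigl[p^\rho(\xi)/p^\rho(0)\bigr]=\phi(\xi)$ is a continuous negative definite function. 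By the description of $\Nsf$ given right after Definition~\ref{dist-03}, this is precisely the statement $p^\rho\in\Nsf$; in particular $p^\rho$ solves Problem~\ref{ptx-05} with the metric $\delta(x,y)=\sqrt{\phi(x-y)}$ whenever $\phi$ is metric generating.

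I do not expect a genuine obstacle here: the argument is a short chain of identifications, and the only points that merit a line of care are the bookkeeping of the normalising constant (that it is forced to equal $p^\rho(0)$, with no accompanying matrix $C$) and the elementary observation that $p^\rho(x)/p^\rho(0)\in(0,1]$, so that its logarithm is defined — and both of these are automatic from the assumptions. If one prefers not to invoke Theorem~\ref{dist-09}, the self-reciprocity $p^\rho=p^\rho(0)\,\Ff^{-1}[p^\rho]$ can instead be read directly off the mixture representation \eqref{dist-e06} together with the symmetry condition \eqref{dist-e09}; but since Fourier self-reciprocity is granted in the hypothesis of the corollary, this detour is unnecessary.
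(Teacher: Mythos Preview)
Your argument is correct and matches the paper's intended (unwritten) proof: the corollary is stated without proof precisely because it is immediate from the definitions, and your chain of identifications---$p^\rho/p^\rho(0)=\Ff^{-1}[p^\rho]=e^{-\phi}$ with $\phi$ continuous negative definite---is exactly the point. One small slip: positive definiteness of $p^\rho$ gives only $|p^\rho(x)|\leq p^\rho(0)$, not strict positivity $p^\rho(x)>0$; the latter comes instead directly from the self-reciprocity identity $p^\rho(x)=p^\rho(0)e^{-\phi(x)}$ that you derive one line later, so the well-definedness of the logarithm is best postponed until after that step.
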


We have seen in Example \ref{dist-05}c) that the one-dimensional symmetric Meixner and the one-dimensional generalized hyperbolic distributions are (extended) Fourier self-reciprocal, hence of class $\Nsf$. Using the mixing result from Corollary \ref{dist-11} we can extend this result to the $n$-dimensional generalized hyperbolic distribution with parameters $\lambda=n/4$, $\beta=\mu=0$ and $\kappa=\eta$. Let us remark that for $\lambda=n/4$ and $\kappa\neq \eta$, although Theorem \ref{dist-09} does not cover this case, we are still in the generalized self-reciprocal setting. Indeed,
$$
    \rho_{\eta,\kappa,\lambda} (s)
    =\left(\frac{\kappa}{\eta}\right)^{2\lambda} s^{\frac{n-4}{2}} \rho_{\kappa,\eta,\lambda} (1/s).
$$

Let $Y\sim\rho$ be a mixing random variable where $\rho(ds):=\rho_{\eta,\kappa,\lambda}(s)\,ds$ is the generalized inverse Gaussian distribution
\begin{equation}\label{dist-e14}
    \rho_{\eta,\kappa,\lambda}(s)
    =\frac{(\kappa/\eta)^\lambda}{2K_\lambda (\eta \kappa)}\, s^{\lambda-1} \exp\left[-\frac{1}{2}\,\big(\eta^2 s^{-1} + \kappa^2 s\big)\right],
    \quad s>0,
\end{equation}
with parameters $\eta, \kappa$ and $\lambda$ satisfying
\begin{equation}\label{dist-e15}
\begin{cases}
    \eta\geq 0, \quad \kappa>0  &\text{if\ \ } \lambda>0;\\
    \eta>0, \quad \kappa>0      &\text{if\ \ } \lambda=0;\\
    \eta>0, \quad \kappa\geq 0  &\text{if\ \ } \lambda<0.
\end{cases}
\end{equation}
From \cite{BNH} we know that $\rho_{\eta,\kappa,\lambda}(s)$ is an infinitely divisible probability density on $(0,\infty)$.

Consider the probability density $p^\rho$ as in \eqref{dist-e06} with mixing probability $\rho = \rho_{\eta,\kappa,\lambda}$ as in \eqref{dist-e14}. The formulae for $p^\rho$ and $\Ff^{-1}{p^\rho}$ can be explicitly calculated, see e.g.\ \cite{BNH} and \cite{BKS},
\begin{gather}
    p^\rho(x)
    =\label{dist-e16}
    \frac{(\kappa/\eta)^{\lambda}}{(2\pi)^{n/2}K_\lambda(\eta \kappa)}\,
    \frac{K_{\lambda-n/2} \big(\kappa\sqrt{\eta^2+x\cdot Q^{-1}x}\big)}{\big(\kappa^{-1}\sqrt{\eta^2+x\cdot Q^{-1} x} \big)^{n/2-\lambda}},\quad x\in\rn,
\\
    \Ff^{-1}[p^\rho](\xi)
    =\label{dist-e18}
    \left(\frac{\kappa^2}{\kappa^2+\xi\cdot Q \xi}\right)^{\lambda/2}\,
    \frac{K_\lambda\big(\eta \sqrt{\kappa^2+\xi\cdot Q \xi}\big)}{K_\lambda(\eta \kappa)}, \quad \xi\in\rn.
\end{gather}
The probability distributions with the densities $p^\rho$ are called \emph{generalized hyperbolic distributions} with parameters $\eta$, $\kappa$ and $\lambda$ (satisfying \eqref{dist-e15}). Since $\rho = \rho_{\eta,\kappa,\lambda}$ is infinitely divisible, so is $p^\rho$; as  $K_\lambda = K_{-\lambda}$, it is easy to see that $p^\rho$ is extended (Fourier) self-reciprocal for $\lambda = n/4$, hence $p^\rho\in\Nsf$ by Corollary \ref{dist-11}.

For general $\lambda\geq 0$ and parameters $\eta,\kappa$ satisfying \eqref{dist-e15} we can use the fact that \eqref{dist-e16} and \eqref{dist-e18} have the same structure to conclude that $-\ln \bigl(p^\rho(x)\big/p^\rho(0)\bigr)$ is a continuous negative definite function. In the table in Section \ref{table} below we give several one-dimensional examples of such $p^\rho$. Our considerations show
\begin{theorem}\label{dist-13}
    Let $Q\in\real^{n\times n}$ be a positive definite matrix, let $\eta,\kappa,\lambda$ satisfy the conditions \eqref{dist-e15} and denote by $\rho = \rho_{\eta,\kappa,\lambda}$ the generalized inverse Gaussian distribution \eqref{dist-e14}. Then the generalized hyperbolic distribution $p^\rho$ from \eqref{dist-e16} has the property that
    $$
        \psi_{\eta,\kappa,\lambda}(x)
        :=
        -\ln \frac{p^\rho(x)}{p^\rho(0)}
        =
        -\ln\left(\frac{\eta^{n/2-\lambda}\, K_{\lambda-n/2} \big(\kappa \sqrt{\eta^2 + x\cdot Q^{-1}x} \big)}{K_{\lambda-n/2}(\kappa\eta) \big(\eta^2 + x\cdot Q^{-1}x \big)^{n/4 - \lambda/2}}\right),
        \quad x\in\rn,
    $$
    is a continuous negative definite function which induces a metric $\delta_{\eta,\kappa,\lambda}(x,y) = \sqrt{\psi_{\eta,\kappa,\lambda}(x-y)}$ of class $\mcn(\rn)$.   In particular, $p^\rho\in\Nsf$.
\end{theorem}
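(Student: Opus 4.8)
The plan is to read off, directly from the two closed forms \eqref{dist-e16} and \eqref{dist-e18}, that the normalised density $p^\rho(x)/p^\rho(0)$ is itself the characteristic function of a generalized hyperbolic distribution—with a ``dual'' parameter triple. Concretely, substitute $(\eta,\kappa,\lambda)\mapsto(\kappa,\eta,n/2-\lambda)$ and $Q\mapsto Q^{-1}$ in the right-hand side of \eqref{dist-e18} and use the reflection formula $K_\nu=K_{-\nu}$ for the Bessel functions; the resulting expression coincides, term by term, with $p^\rho(x)/p^\rho(0)$ as written in \eqref{dist-e16} (equivalently, with $\exp(-\psi_{\eta,\kappa,\lambda}(x))$ in the statement, since $n/4-\lambda/2=(n/2-\lambda)/2$). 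Writing $p^\sigma$ for the generalized hyperbolic density \eqref{dist-e16} built from the triple $(\kappa,\eta,n/2-\lambda)$ and the matrix $Q^{-1}$, this says precisely
$$
    \frac{p^\rho(x)}{p^\rho(0)} = \Ff^{-1}[p^\sigma](x),
$$
that is, $p^\rho/p^\rho(0)$ is the characteristic function of the law with density $p^\sigma$.

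Granting this identity, $p^\rho\in\Nsf$ is immediate: by Barndorff-Nielsen and Halgreen (see \cite{BNH}, and also \cite{BKS}) every generalized hyperbolic distribution is infinitely divisible, so $\Ff^{-1}[p^\sigma]$ is an infinitely divisible characteristic function and hence $\psi_{\eta,\kappa,\lambda}(x)=-\ln\bigl(p^\rho(x)/p^\rho(0)\bigr)=-\ln\Ff^{-1}[p^\sigma](x)$ is a continuous negative definite function with $\psi_{\eta,\kappa,\lambda}(0)=0$. The one point that needs attention is that $p^\sigma$ is a bona fide (normalised) probability density, i.e.\ that the dual triple $(\kappa,\eta,n/2-\lambda)$ again satisfies \eqref{dist-e15}. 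Going through the three regimes of \eqref{dist-e15}, one checks that the map $\lambda\mapsto n/2-\lambda$ combined with the interchange $\eta\leftrightarrow\kappa$ preserves admissibility precisely on the range where $p^\rho(0)$ is finite; the boundary values $\eta=0$ (where one first has to use the small-argument asymptotics $K_\nu(z)\sim\tfrac12\Gamma(\nu)(z/2)^{-\nu}$, $\nu>0$, and $K_0(z)\sim-\ln z$, to decide whether $p^\rho(0)<\infty$) and $\kappa=0$ (which yields the multivariate Student-$t$-type members, still infinitely divisible) are then dealt with either directly or by continuity in the parameters.

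It remains to check that $\delta_{\eta,\kappa,\lambda}(x,y):=\sqrt{\psi_{\eta,\kappa,\lambda}(x-y)}$ is a metric lying in $\mcn(\rn)$. Since $\psi_{\eta,\kappa,\lambda}$ is continuous negative definite with $\psi_{\eta,\kappa,\lambda}(0)=0$, its square root is subadditive, which gives the triangle inequality; translation invariance is automatic, and symmetry and strict positivity off the diagonal follow from those of $p^\rho$. By Lemma~\ref{cndf-05}, $\psi_{\eta,\kappa,\lambda}\in\mcn(\rn)$ as soon as $\liminf_{|x|\to\infty}\psi_{\eta,\kappa,\lambda}(x)>0$. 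This holds because $p^\rho$ is a probability density with $p^\rho(0)>0$ that vanishes at infinity—indeed, by the exponential decay $K_\nu(z)\sim\sqrt{\pi/(2z)}\,e^{-z}$ as $z\to\infty$ the ratio $p^\rho(x)/p^\rho(0)$ tends to $0$—so $\psi_{\eta,\kappa,\lambda}(x)\to\infty$; in particular $\psi_{\eta,\kappa,\lambda}$ is non-periodic and $\liminf_{|x|\to\infty}\psi_{\eta,\kappa,\lambda}(x)=\infty$, and the proof is complete.

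The genuine obstacle is the bookkeeping underlying the identity $p^\rho/p^\rho(0)=\Ff^{-1}[p^\sigma]$, i.e.\ recognising the normalised density as the characteristic function of an honest—and admissible—generalized hyperbolic law; once this structural self-reciprocity (up to a parameter change) is in place, infinite divisibility of the generalized hyperbolic family from \cite{BNH} does the rest and the $\mcn$-property is routine. The only delicate bit within that bookkeeping is the behaviour at the boundary of the parameter region \eqref{dist-e15}.
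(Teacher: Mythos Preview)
Your approach is essentially the paper's own: exploit the structural identity between \eqref{dist-e16} and \eqref{dist-e18} to recognise $p^\rho(x)/p^\rho(0)$ as the characteristic function of a generalized hyperbolic law with the swapped parameters $(\kappa,\eta,n/2-\lambda,Q^{-1})$, invoke the infinite divisibility of that family, and then use the large-argument asymptotics $K_\nu(z)\approx\sqrt{\pi/(2z)}\,e^{-z}$ together with Lemma~\ref{cndf-05} to get $\mcn(\rn)$. In fact you are more explicit than the paper, which relegates the negative-definiteness to the discussion preceding the theorem and, in the formal proof, only spells out the $\mcn$ step; your attention to whether the dual triple actually lands in the admissible region \eqref{dist-e15}---and to the degenerate boundary cases $\eta=0$ or $\kappa=0$---fills a gap the paper leaves implicit.
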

\begin{proof}
    We only have to show that $\delta_{\eta,\kappa,\lambda}\in\mcn(\rn)$. Since
    $$
        K_\nu(x) \approx \sqrt{\frac{\pi}{2|x|}} \, e^{-c|x|}
        \quad\text{as}\quad
        |x|\to \infty,
    $$
    cf.\ \cite[vol.\ 2, \S7.4.1]{erd-et-al}, we get  $\delta^2_{\eta,\kappa,\lambda}(x,0)= |x|+o(|x|)$ as $|x|\to\infty$. This implies that $\delta_{\eta,\kappa,\lambda}\in\mcn(\rn)$ by Lemma \ref{cndf-05}.
\end{proof}

Let us add one more example of class $\Nsf$ distributions obtained by mixing. Consider the probability density $p_2^S$ obtained by mixing \eqref{dist-e06} with the probability measure $\rho^S_2(ds)$ whose Laplace transform is
\begin{equation}\label{dist-e20}
    \mathcal{L}[\rho^S_2](\lambda)
    =\left( \frac{\sqrt{2\lambda}}{\sinh \sqrt{2\lambda}}\right)^2.
\end{equation}
The density $p_2^S$ and its characteristic function $\Ff^{-1}[p_2^S]$ can be calculated explicitly,
\begin{equation*}
    p_2^S(x)=\frac{\frac{\pi}{2} \left(\frac{\pi x}{2} \coth(\frac{\pi x}{2})-1\right)}{\sinh^2 (\frac{\pi x}{2})}
    \quad\text{and}\quad
    \Ff^{-1}[p_2^S](\xi)=\left(\frac{\xi}{\sinh\xi}\right)^2,
\end{equation*}
see \cite[p.\ 312, Table 6]{PY03}.  Note that $p_2^S$ is infinitely divisible because the mixing measure $\rho_2^S$ is infinitely divisible.
\begin{lemma}\label{dist-15}
    The transition probability density $p_2^S$ belongs to the class $\Nsf$.
\end{lemma}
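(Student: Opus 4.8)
The plan is to verify the two conditions defining membership in $\Nsf$ separately. The infinite divisibility of $p_2^S$ has already been noted, since the mixing measure $\rho_2^S$ with Laplace transform \eqref{dist-e20} is infinitely divisible; so the only real task is to show that $x\mapsto-\ln\bigl(p_2^S(x)/p_2^S(0)\bigr)$ is a continuous negative definite function. As a first step I would read off from the explicit density that $p_2^S(0)=\pi/6$ — this follows from $\tfrac{\pi x}{2}\coth\tfrac{\pi x}{2}-1\sim\tfrac13\bigl(\tfrac{\pi x}{2}\bigr)^2$ and $\sinh^2\tfrac{\pi x}{2}\sim\bigl(\tfrac{\pi x}{2}\bigr)^2$ as $x\to0$ — so that, abbreviating $y:=\tfrac{\pi x}{2}$,
\[
    \frac{p_2^S(x)}{p_2^S(0)}=\frac{3\,(y\coth y-1)}{\sinh^2 y}.
\]

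Next I would factor out the infinite product hidden in $\sinh^2 y$. Using $\sinh^2 y=y^2\prod_{j\geq1}\bigl(1+\tfrac{y^2}{\pi^2j^2}\bigr)^2$ and the Mittag--Leffler expansion $y\coth y-1=\sum_{j\geq1}\tfrac{2y^2}{y^2+\pi^2j^2}$, one obtains
\[
    \frac{p_2^S(x)}{p_2^S(0)}=\frac{3(y\coth y-1)}{y^2}\,\prod_{j\geq1}\Bigl(1+\frac{x^2}{4j^2}\Bigr)^{-2},
\]
and therefore
\[
    -\ln\frac{p_2^S(x)}{p_2^S(0)}=-\ln\Bigl(\frac{3(y\coth y-1)}{y^2}\Bigr)+2\sum_{j\geq1}\ln\Bigl(1+\frac{x^2}{4j^2}\Bigr).
\]
The series on the right is dominated by $\sum_j x^2/(4j^2)$, so it converges locally uniformly, and every summand $x\mapsto\ln\bigl(1+x^2/(4j^2)\bigr)$ is continuous negative definite by the L\'evy--Khintchine representation already recorded in Example~\ref{ptx-07}; hence this series is a continuous negative definite function.

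It remains to handle the first term. Here the point is that $\tfrac{3(y\coth y-1)}{y^2}=\tfrac{24}{\pi^2}\sum_{j\geq1}\tfrac1{x^2+4j^2}=\int_0^\infty e^{-x^2t}\,\mu(dt)$ with $\mu(dt)=\tfrac{24}{\pi^2}\bigl(\sum_{j\geq1}e^{-4j^2t}\bigr)\,dt$; this $\mu$ is a probability measure on $(0,\infty)$ (its total mass equals $\tfrac{24}{\pi^2}\sum_j\tfrac1{4j^2}=1$) and its density is a mixture of exponential densities, hence completely monotone. Since a completely monotone probability density on $(0,\infty)$ is infinitely divisible (Steutel; cf.\ \cite{SH04}), $-\ln\mathcal L\mu$ is a Bernstein function $f$ with $f(0)=0$, and consequently $-\ln\bigl(\tfrac{3(y\coth y-1)}{y^2}\bigr)=f(|x|^2)$ is continuous negative definite, being the composition of a Bernstein function with the continuous negative definite function $x\mapsto|x|^2$. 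Adding the two continuous negative definite summands shows $-\ln\bigl(p_2^S(x)/p_2^S(0)\bigr)$ is continuous negative definite, which together with the infinite divisibility of $p_2^S$ gives $p_2^S\in\Nsf$.

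The only step that is not routine bookkeeping is the treatment of this first term: one has to notice that the lone non-product ingredient $y\coth y-1$ becomes, after division by $y^2$, a completely monotone function of $x^2$, and then invoke the infinite divisibility of completely monotone densities. The remaining ingredients — the standard product and partial-fraction expansions of the hyperbolic functions, and the closure of the continuous negative definite functions under sums and locally uniform limits — are all elementary.
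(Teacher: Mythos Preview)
Your proof is correct, and it follows a genuinely different route from the paper's argument for the crucial step.

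Both proofs ultimately factor $p_2^S(x)/p_2^S(0)$ into the same two pieces: the term $(y/\sinh y)^2=\prod_{j\geq 1}(1+x^2/(4j^2))^{-2}$ and the term $3(y\coth y-1)/y^2$. For the first piece there is nothing to do --- it is visibly an infinitely divisible characteristic function, and you and the paper handle it in essentially the same way. The difference lies in the second piece. The paper writes $p_2^S(x)/p_2^S(0)$ as a ratio $h(ix)/g^3(ix)$ of entire functions, invokes Laguerre's theorem to locate the zeros of $h$, applies the Hadamard factorization, and then uses Proposition~\ref{dist-17} (a criterion from \cite{SSV} for ratios of certain entire functions to be infinitely divisible characteristic functions). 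You instead expand $3(y\coth y-1)/y^2$ via Mittag--Leffler as $\tfrac{24}{\pi^2}\sum_{j\geq 1}(x^2+4j^2)^{-1}$, recognise this as the Laplace transform $\mathcal L\mu(x^2)$ of the probability measure with completely monotone density $\tfrac{24}{\pi^2}\sum_{j\geq 1}e^{-4j^2 t}$, and appeal to Steutel's theorem that completely monotone densities (equivalently, mixtures of exponentials) are infinitely divisible.

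Your argument is arguably the more elementary of the two: it avoids any complex-analytic machinery (Laguerre, Hadamard) and uses only the standard real expansions of the hyperbolic functions together with a classical infinite-divisibility criterion already used elsewhere in the paper (cf.\ Example~\ref{proc-43}\,b) and the references to \cite{SH04}). The paper's route, on the other hand, illustrates a more structural tool (Proposition~\ref{dist-17}) that could in principle be applied to other ratios of entire functions where no explicit partial-fraction expansion presents itself.
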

The proof of Lemma \ref{dist-15} relies on the following proposition from \cite[Corollary 9.16]{SSV}.
\begin{proposition}\label{dist-17}
    Let $g$ and $h$ be two entire functions of orders $0<\rho_1,\rho_2<2$ such that $g(0)=h(0)=0$; let $(a_n)_{n\geq 1}$ and $(b_n)_{n\geq 1}$ be two strictly increasing sequences of positive numbers, and let $(\pm a_n)_{n\geq 1}$ and $(\pm b_n)_{n\geq 1}$ be simple roots of $g$ and $h$, respectively. Moreover, assume that $a_n<b_n$ for all $n\geq 1$. Then the function $\phi(t)=h(it) / g(it)$ is the characteristic function of an infinitely divisible probability measure on $\real$.
\end{proposition}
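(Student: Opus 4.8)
The plan is to turn $\phi$ into an explicit infinite product of elementary infinitely divisible factors and then to exhibit its L\'evy--Khintchine exponent directly. First I would apply the Hadamard factorization theorem. Since $g$ has finite order $\rho_1<2$ its genus is at most $1$, and because the nonzero zeros of $g$ are exactly the symmetric simple pairs $\pm a_n$, the two Weierstrass elementary factors attached to $+a_n$ and $-a_n$ combine to $1-z^2/a_n^2$ while their exponential correction terms cancel. Hence
\begin{equation*}
    g(z)=c_g\,z^{k}e^{\gamma_g z}\prod_{n\geq 1}\Bigl(1-\tfrac{z^2}{a_n^2}\Bigr),
\end{equation*}
and analogously for $h$ with $b_n,c_h,\gamma_h$ and the same order $k$ of the zero at the origin (the equality of these orders is forced by $\phi(0)$ being finite and nonzero). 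Putting $z=it$ replaces $1-z^2/a_n^2$ by $1+t^2/a_n^2$; cancelling the common factor $(it)^k$ and imposing $\phi(0)=1$ (which forces $c_g=c_h$) yields
\begin{equation*}
    \phi(t)=e^{i\gamma t}\prod_{n\geq 1}\frac{1+t^2/b_n^2}{1+t^2/a_n^2},\qquad\gamma:=\gamma_h-\gamma_g.
\end{equation*}
In the relevant (real entire) setting $\gamma$ is real, so the phase $e^{i\gamma t}$ has modulus one and $\phi$ is bounded by $1$ because $a_n<b_n$ makes every factor $\leq 1$.

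Next I would verify that each building block is an infinitely divisible characteristic function. The factor $e^{i\gamma t}$ is the characteristic function of a point mass, hence infinitely divisible. For the $n$-th rational factor I would pass to the exponent and write, using the representation established in Example \ref{ptx-07},
\begin{equation*}
    \ln\Bigl(1+\tfrac{t^2}{a_n^2}\Bigr)-\ln\Bigl(1+\tfrac{t^2}{b_n^2}\Bigr)
    =\int_{\real\setminus\{0\}}\bigl(1-\cos(tz)\bigr)\,\frac{e^{-a_n|z|}-e^{-b_n|z|}}{|z|}\,dz.
\end{equation*}
This is the heart of the matter: a difference of two continuous negative definite functions is in general \emph{not} negative definite, but the hypothesis $a_n<b_n$ gives $e^{-a_n|z|}-e^{-b_n|z|}\geq 0$, so the integrand is $(1-\cos(tz))$ against a nonnegative density. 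Each factor therefore has a bona fide L\'evy--Khintchine representation---with a finite L\'evy measure, so it is even compound Poisson---and is infinitely divisible.

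Finally I would assemble the infinite product by summing exponents,
\begin{equation*}
    -\ln\phi(t)=-i\gamma t+\int_{\real\setminus\{0\}}\bigl(1-\cos(tz)\bigr)\,m(dz),\qquad
    m(dz):=\sum_{n\geq 1}\frac{e^{-a_n|z|}-e^{-b_n|z|}}{|z|}\,dz,
\end{equation*}
and check that $m$ is a genuine L\'evy measure, i.e.\ $\int(1\wedge z^2)\,m(dz)<\infty$. Near the origin $\int_{|z|<1}z^2\,m(dz)\leq\sum_n\int_{\real}|z|e^{-a_n|z|}\,dz=2\sum_n a_n^{-2}$, and away from it $\int_{|z|>1}m(dz)\leq 2\sum_n e^{-a_n}/a_n$; both series converge because the exponent of convergence of the zeros of an entire function of order $\rho_1<2$ is strictly below $2$, so $\sum_n a_n^{-2}<\infty$ (and then $\sum_n b_n^{-2}<\infty$ since $b_n>a_n$). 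Consequently $-\ln\phi$ is a continuous negative definite function with $\phi(0)=1$, and by the characterization of infinite divisibility recalled in Section \ref{aux} the function $\phi$ is the characteristic function of an infinitely divisible probability measure. Equivalently, one may argue by closure: each partial product is infinitely divisible, and infinite divisibility is preserved under the locally uniform limit of characteristic functions. The only genuine obstacles are the sign of the difference (resolved by $a_n<b_n$) and the convergence of $m$ (resolved by the order being $<2$); the reality of $\gamma$, needed for $|\phi|\leq 1$, is automatic in the real entire setting from which the statement is applied.
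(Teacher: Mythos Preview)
The paper does not prove this proposition; it is quoted from \cite[Corollary 9.16]{SSV} and invoked as a black box in the proof of Lemma \ref{dist-15}. There is therefore no paper-internal argument to compare against.

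Your proof is essentially correct and supplies what the paper omits. The Hadamard factorization step is sound: order strictly below $2$ gives genus at most $1$, the symmetric pairs $\pm a_n$ collapse the Weierstrass primary factors to $1-z^2/a_n^2$, and substituting $z=it$ produces the product of the rational factors $(1+t^2/b_n^2)/(1+t^2/a_n^2)$. The identification of the L\'evy density via the formula $\ln(1+t^2/a^2)=\int_{\real\setminus\{0\}}(1-\cos tz)\,e^{-a|z|}|z|^{-1}\,dz$ (already used in Example \ref{ptx-07}) is correct, the nonnegativity of $e^{-a_n|z|}-e^{-b_n|z|}$ is precisely where the interlacing $a_n<b_n$ enters, and the integrability of the summed L\'evy measure follows from $\sum_n a_n^{-2}<\infty$, which you correctly derive from the bound on the exponent of convergence. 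This is, incidentally, close in spirit to the argument in \cite{SSV}, where the same Hadamard product is used but the individual factors are placed in the Bondesson class rather than having their L\'evy densities written down explicitly; your route is marginally more elementary.

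The soft spots you flag are genuine, but they are defects of the statement as quoted rather than of your proof: nothing in the hypotheses forces the multiplicities of the zero at the origin to agree, the leading constants $c_g,c_h$ to coincide, or $\gamma=\gamma_h-\gamma_g$ to be real. All three are implicit in the conclusion (a characteristic function must satisfy $\phi(0)=1$ and $\overline{\phi(t)}=\phi(-t)$), and in the application to Lemma \ref{dist-15} the functions involved are real and odd/even, so these issues do not arise. Your remark that reality of $\gamma$ is ``automatic in the real entire setting from which the statement is applied'' is exactly the right caveat.
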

\begin{proof}[Proof of Lemma~\ref{dist-15}]
    We have to show that  $\delta^2(x,0):=-\ln \bigl(p_2^S(x)\big/ p_2^S(0)\bigr)$ is negative definite. Write $\delta^2(x,0)$ in the form  $\delta^2(x,0)=h(ix)\big/ g^3(ix)$, where
\begin{equation*}
    h(z):=\frac{\pi}{2} \Big(\frac{\pi z}{2} \cos (\frac{\pi z}{2})-\sin (\frac{\pi z}{2})  \Big)
    \quad\text{and}\quad
    g(z):=\sin (\frac{\pi z}{2}).
\end{equation*}
    We want to use Proposition \ref{dist-17}. Clearly, both $g$ and $h$ are entire functions, so we need to check only the assumption about their zeroes. The zeroes of $g(z)$ are $(\pm 2k)_{k\geq 0}$. Observe that $h(z)=(\frac{\pi z}{2})^2\, \frac{d}{dz} \left( \sin \bigl(\frac{\pi z}{2}\bigr) \big/ \frac{\pi z}{2} \right)$. Since $\sin \bigl(\frac{\pi z}{2}\bigr)\big/\frac{\pi z}{2}$ is an entire function of order $1$ with exclusively real roots, Laguerre's theorem, see \cite[Theorem 8.5.2]{T39}, shows that the roots of $\frac d{dz} \left( \sin \bigl(\frac{\pi z}{2}\bigr)\big/\frac{\pi z}{2}\right)$ are also real and that they are located between the zeroes of $\sin \bigl(\frac{\pi z}{2}\bigr)\big/\frac{\pi z}{2}$, i.e.\ between $(\pm 2k)_{k>0}$. Moreover, $z=0$ is a root of $\frac d{dz} \left( \sin\bigl(\frac{\pi z}{2}\bigr)\big/\frac{\pi z}{2}\right)$ of multiplicity $1$ because it is located between $-2$ and $2$. Hence, the function $h$ has a root $z=0$ of multiplicity $3$, and otherwise simple roots $b_k, k>0$, located between $(2k)_{k>0}$ on the right half-axis, and $-b_k, k>0$, located  between $(-2k)_{k>0}$ on the left half-axis. Thus we can rewrite $h(z)$ using the Hadamard representation theorem in the form
    $$
        h(z)=\Big(\frac{\pi z}{2}\Big)^3  \prod_{k=1}^\infty \Big(1-\frac{z^2}{b_k^2}\Big).
    $$
    Therefore,
\begin{equation*}
    \delta^2(z,0)
    =\frac{ \prod_{k=1}^\infty \big(1-\frac{z^2}{b_k^2}\big)}{\sin\bigl(\frac{\pi z}{2}\bigr)\big/\frac{\pi z}{2}} \left(\frac{(\frac{\pi z}{2})}{\sin (\frac{\pi z}{2})}\right)^2
    =\phi_1(z)\phi_2(z),
\end{equation*}
where
$$
    \phi_1(z):=\frac{ \prod_{k=1}^\infty \big(1-\frac{z^2}{b_k^2}\big)}{\sin\bigl(\frac{\pi z}{2}\bigr)\big/\frac{\pi z}{2}}
    \quad\text{and}\quad
    \phi_2(z):= \left(\frac{(\frac{\pi z}{2})}{\sin (\frac{\pi z}{2})}\right)^2.
$$
    For $\phi_1$ we can apply Proposition~\ref{dist-17}, since both numerator and denominator are entire functions of order $1$, with simple zeroes satisfying the conditions of the proposition. Hence, $\phi_1(i x )$ is the characteristic function of an infinitely divisible distribution.  As a power of infinitely divisible characteristic functions, $\phi_2(z)$ is again an infinitely divisible characteristic function.
\end{proof}

Let us close this section with an interesting negative result on the class $\Nsf$.

\begin{lemma}[$n$-dimensional version]\label{dist-21}
    Let $p:\rn\to[0,\infty)$ be a rotationally symmetric probability density which is of the form $p(x) = c \,e^{-f(|x|)}$ for some even and increasing function. If $p$ is not the normal distribution and if
    $$
        \lim_{r\to\infty} \frac{f(r)}{r\ln r} = \infty
    $$
    then $p$ cannot be infinitely divisible.
\end{lemma}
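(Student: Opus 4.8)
The plan is to reduce the assertion to the one-dimensional case, where the behaviour of the tails of non-Gaussian infinitely divisible laws is classical, and then to derive a contradiction from the assumed super-$r\ln r$ growth of $f$. So suppose, contrary to the claim, that the random vector $X$ with law $p(x)\,dx$ is infinitely divisible. First I would pass to a one-dimensional marginal: writing $X=(X_1,\dots,X_n)$, the coordinate $X_1$ is again infinitely divisible (marginals of infinitely divisible laws are infinitely divisible, since $X=\sum_{j=1}^m X^{(j)}$ with the $X^{(j)}$ i.i.d.\ forces $X_1=\sum_{j=1}^m X_1^{(j)}$ with the $X_1^{(j)}$ i.i.d.), and it is non-degenerate because it possesses the genuine density $x_1\mapsto\int_{\real^{n-1}}p(x_1,x')\,dx'$.

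The decisive observation is that $X_1$ is \emph{not} Gaussian. By rotational symmetry of $p$ the characteristic function $\chi_X(\xi)=\Ee e^{i\xi\cdot X}$ depends only on $|\xi|$, say $\chi_X(\xi)=\phi(|\xi|)$; hence $\chi_{X_1}(t)=\chi_X(t e_1)=\phi(|t|)$. If $X_1$ were $N(0,\sigma^2)$ we would get $\phi(s)=e^{-\sigma^2 s^2/2}$ for all $s\geq 0$, and therefore $\chi_X(\xi)=e^{-\sigma^2|\xi|^2/2}$, i.e.\ $p$ would be the density of $N(0,\sigma^2 I)$, contradicting the hypothesis that $p$ is not the normal distribution. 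Consequently \cite[Proposition IV.9.8]{SH04} applies to the non-degenerate, non-Gaussian, infinitely divisible $X_1$ and yields
$$
    \lim_{r\to\infty}\frac{-\ln\Pp(|X_1|>r)}{r\ln r}=\frac 1S,\qquad 0<S<\infty,
$$
where $S$ is the radius of the smallest centred interval supporting the one-dimensional L\'evy measure of $X_1$.

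It remains to bound the same tail from above and reach a contradiction. In polar coordinates $\Pp(|X|>r)=\kappa_n\int_r^\infty e^{-f(s)}s^{n-1}\,ds$ for a dimensional constant $\kappa_n$. Since $f$ is increasing, $e^{-f(s)}\le e^{-f(r)/2}e^{-f(s)/2}$ for $s\geq r$, and the hypothesis $f(s)/(s\ln s)\to\infty$ makes $e^{-f(s)/2}s^{n-1}$ eventually dominated by a summable power of $s$; hence $C:=\int_{r_0}^\infty e^{-f(s)/2}s^{n-1}\,ds<\infty$ for $r_0$ large, and $\Pp(|X|>r)\le\kappa_n C\,e^{-f(r)/2}$ for $r\geq r_0$. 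Thus $-\ln\Pp(|X|>r)\geq\tfrac12 f(r)-\mathrm{const}$, and since $|X_1|\le|X|$ implies $\Pp(|X_1|>r)\le\Pp(|X|>r)$, the same bound holds for $-\ln\Pp(|X_1|>r)$; dividing by $r\ln r$ and letting $r\to\infty$ gives $1/S\geq\tfrac12\lim_{r\to\infty}f(r)/(r\ln r)=\infty$, which is impossible. Hence $X$, and with it $p$, is not infinitely divisible.

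I expect the only genuinely delicate point to be the non-Gaussianity of the marginal $X_1$: a Gaussian $X_1$ would itself satisfy $-\ln\Pp(|X_1|>r)/(r\ln r)\to\infty$, so the tail estimate alone does not suffice --- one must invoke the Steutel--van Harn dichotomy, which requires a non-Gaussian infinitely divisible $X_1$, and rotational symmetry together with ``$p$ is not normal'' is exactly what delivers this. The growth hypothesis on $f$ is used twice (for the finiteness of $C$ and for the final limit), and monotonicity of $f$ only to pull $e^{-f(r)/2}$ out of the tail integral; both are routine.
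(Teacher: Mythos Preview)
Your proof is correct, but it takes a genuinely different route from the paper's. The paper works directly in $\rn$: it invokes an $n$-dimensional version of the tail dichotomy (a ``straightforward modification'' of \cite[Theorem 26.1]{Sato}, with \cite[Proposition IV.9.8]{SH04} cited for the exact one-dimensional formula) to obtain
\[
    \limsup_{r\to\infty}\frac{-\ln\Pp(|X|>r)}{r\ln r}=\frac 1S,
\]
where $S$ is the radius of the smallest ball supporting the $n$-dimensional L\'evy measure $\nu$; then the same tail bound $\Pp(|X|>r)\leq 2C\,e^{-f(r)/2}$ you derive forces $S=0$, contradicting $S>0$.

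You instead reduce to one dimension by passing to the marginal $X_1$ and apply the Steutel--van Harn result as a black box. The price is the extra step of showing that $X_1$ is not Gaussian, which you handle cleanly via rotational symmetry: $\chi_X(\xi)=\phi(|\xi|)$ means a Gaussian $X_1$ would force a Gaussian $X$. The gain is that you avoid having to justify the $n$-dimensional modification of the tail theorem; the inequality $\Pp(|X_1|>r)\leq\Pp(|X|>r)$ lets you transfer the tail bound for free. One minor imprecision: you write $0<S<\infty$, but the L\'evy measure of $X_1$ could have unbounded support, so $S=\infty$ (hence $1/S=0$) is possible. This does not affect your argument --- the contradiction comes from $1/S\geq\infty$ against $1/S\in[0,\infty)$, which only needs $S>0$.
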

\begin{proof}
    Let $X$ be a (non-degenerate) $n$-dimensional random variable with probability density $p(x)$. If $X$ is infinitely divisible and not normally distributed then, by a straightforward modification of \cite[Theorem 26.1]{Sato} (see \cite[Proposition IV.9.8]{SH04} for the exact formula in the one-dimensional setting), we have
    \begin{equation*}
        \limsup_{r\to\infty} \frac{-\ln \Pp(|X|>r)}{r\ln r}=\frac{1}{S}
    \end{equation*}
    where $S = \inf\{R>0\::\: \supp\nu \subset B(0,R)\}$ and $\nu$ is the L\'evy measure in the L\'evy-Khintchine representation of the characteristic exponent of $X$. If $\nu=0$, we set $S=0$. Since $X$ is neither degenerate nor Gaussian, we have $S>0$.

    On the other hand, since $f$ is increasing on $(0,\infty)$, we find for all $r>0$
    $$
        \int_{|x|>r}  e^{-f(|x|)}\,dx
        \leq e^{-\frac{f(r)}{2}} \int_{|x|>r} e^{-\frac{f(|x|)}{2}}\,dx
        = C \, e^{-\frac{f(r)}{2}}.
    $$
    Thus, $\Pp(|X|>r)\leq 2C\, e^{-\frac{f(r)}{2}}$, and
    \begin{equation}
        \limsup_{r\to\infty} \frac{-\ln \Pp(|X|>r)}{r\ln r}
        \geq \limsup_{r\to\infty} \frac{-\ln\big(e^{-\frac{f(r)}{2}}\big)}{r\ln r}
        =\frac 12 \limsup_{r\to\infty } \frac{f(r)}{r\ln r}
        = \infty.
    \end{equation}
    We conclude that $S=0$ and $\supp\nu =\emptyset$ contradicting our assumptions.
\end{proof}

\begin{corollary}\label{dist-23}
     Let $(p_t(x))_{t\geq 0}$ be the transition densities of an $n$-dimensional  symmetric L\'evy process.  If the characteristic exponent $\psi(\xi)$  satisfies $\psi(\xi)\geq f(|\xi|)$ where  $f:(0,\infty)\to(0,\infty)$  is increasing and satisfies $\lim_{r\to\infty} \frac{f(r)}{r\ln r}=\infty$, then none of the densities $p_t(x)$ is of class $\Nsf$.

    \noindent
     In particular,
     \begin{enumerate}[\upshape a)]
     \item
     the transition densities of a rotationally  symmetric $\alpha$-stable process with $1<\alpha<2$ are not of class $\Nsf$;
     \item
     the convolution of a density of class  $\Nsf$ with the normal density is not of class $\Nsf$.
     \end{enumerate}
\end{corollary}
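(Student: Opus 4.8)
The plan is to reduce, for each fixed $t_0>0$, the assertion $p_{t_0}\notin\Nsf$ to the $n$-dimensional Lemma~\ref{dist-21}. First I would suppose $p_{t_0}\in\Nsf$, i.e.\ $-\ln\big(p_{t_0}(x)/p_{t_0}(0)\big)$ is a continuous negative definite function, so that $p_{t_0}(x)/p_{t_0}(0)$ is an infinitely divisible characteristic function. Since $p_{t_0}=\Ff e^{-t_0\psi}$ and $\psi$ is even, one has $p_{t_0}(x)/p_{t_0}(0)=\Ff^{-1}q(x)$ for the function $q(\xi):=e^{-t_0\psi(\xi)}\big/\big((2\pi)^n p_{t_0}(0)\big)$, which is then an infinitely divisible probability density on $\rn$; write $Z\sim q$. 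The hypothesis $\psi(\xi)\geq f(|\xi|)$ gives the pointwise bound $q(\xi)\leq c\,e^{-t_0 f(|\xi|)}$ with $c=\big((2\pi)^n p_{t_0}(0)\big)^{-1}$; note that $e^{-t_0 f(|\cdot|)}\in L^1(\rn)$ automatically, since $f(r)/(r\ln r)\to\infty$ forces $f(r)\geq r$ for large $r$ (this is also why the densities $p_t$ exist in the first place).

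The next step is to run the argument in the proof of Lemma~\ref{dist-21}, which uses only that $f$ is increasing (for the tail bound) together with the tail asymptotics of infinitely divisible laws, so that the radial domination above is enough even though $q$ itself need not be rotationally symmetric. Monotonicity of $f$ yields
\[
    \Pp(|Z|>r)=\int_{|\xi|>r}q(\xi)\,d\xi\leq c\,e^{-t_0 f(r)/2}\int_{\rn}e^{-t_0 f(|\xi|)/2}\,d\xi=C\,e^{-t_0 f(r)/2},
\]
hence $\limsup_{r\to\infty}(r\ln r)^{-1}\big(-\ln\Pp(|Z|>r)\big)\geq\tfrac{t_0}{2}\limsup_{r\to\infty} f(r)/(r\ln r)=\infty$. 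On the other hand $Z$ is non-degenerate and, provided $t_0\psi$ is not a positive semidefinite quadratic form --- i.e.\ the process carries a non-trivial jump part, which I take as an implicit standing assumption, as otherwise $p_t$ is Gaussian and lies in $\Nsf$ --- the law of $Z$ is not Gaussian. The $n$-dimensional analogue of \cite[Theorem~26.1]{Sato} invoked in the proof of Lemma~\ref{dist-21} then gives $\limsup_{r\to\infty}(r\ln r)^{-1}\big(-\ln\Pp(|Z|>r)\big)=1/S$, where $S\in(0,\infty)$ is the radius of the smallest Euclidean ball containing the support of the L\'evy measure of $Z$. The two displays contradict each other, so $p_{t_0}\notin\Nsf$.

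For the two consequences I would argue straight from the main part. In a), the rotationally symmetric $\alpha$-stable process has $\psi(\xi)=c_\alpha|\xi|^\alpha$; taking $f(r):=c_\alpha r^\alpha$ we get $\psi(\xi)=f(|\xi|)$ and $f(r)/(r\ln r)=c_\alpha r^{\alpha-1}/\ln r\to\infty$ precisely because $\alpha>1$ (and $\alpha<2$ excludes the Gaussian case). In b), a density $p\in\Nsf$ is symmetric, so $\Ff^{-1}p=e^{-\phi}$ with $\phi$ a real, non-negative continuous negative definite function, $\phi(0)=0$; if $\gauss$ is a centred Gaussian density with covariance $\sigma^2 I$, $\sigma>0$, the convolution theorem gives $\Ff^{-1}(p\star\gauss)(\xi)=e^{-\Psi(\xi)}$ with $\Psi:=\phi+\tfrac{\sigma^2}{2}|\cdot|^2$. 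Then $\Psi$ is a real continuous negative definite function, $\Psi(0)=0$, $\Psi\in\mcn(\rn)$ because $\Psi(\xi)\to\infty$, and $e^{-\Psi}\in L^1(\rn)$ because $|e^{-\Psi(\xi)}|\leq e^{-\sigma^2|\xi|^2/2}$; hence $p\star\gauss$ is the time-one transition density of the symmetric L\'evy process with exponent $\Psi$. Since $\Psi(\xi)\geq\tfrac{\sigma^2}{2}|\xi|^2=:f(|\xi|)$ with $f(r)/(r\ln r)\to\infty$, the main part applies and gives $p\star\gauss\notin\Nsf$ --- provided $p$, hence $\Psi$, is not itself Gaussian.

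The step I expect to be the main obstacle is the $n$-dimensional tail asymptotic $\limsup_{r\to\infty}(r\ln r)^{-1}\big(-\ln\Pp(|Z|>r)\big)=1/S$: \cite[Theorem~26.1]{Sato} is stated in one dimension, and although the passage to $\rn$ is ``straightforward'' --- one can project $Z$ onto a direction realizing $S$ and compare with its one-dimensional marginals, or read off the large-deviation rate from the compound-Poisson blocks of the L\'evy--It\^o decomposition --- it has to be set up so that the exact constant $1/S$, not merely finiteness of the limsup, is recovered. A secondary, bookkeeping point is to state the non-Gaussian hypothesis explicitly: a quadratic $\psi$ does satisfy the growth condition $\psi\geq f$, $f(r)/(r\ln r)\to\infty$, yet the corresponding Gaussian transition density lies in $\Nsf$, so the corollary should be read for processes with a non-degenerate jump component.
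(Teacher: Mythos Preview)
Your argument is essentially the paper's own proof, carried out in more detail. The paper simply writes ``Fix $t>0$ and apply Lemma~\ref{dist-21} to the density $p(\xi):=e^{-t\psi(\xi)}/p_t(0)$''; you make explicit what this means---namely that $p_{t_0}\in\Nsf$ forces $q(\xi)=e^{-t_0\psi(\xi)}/\bigl((2\pi)^n p_{t_0}(0)\bigr)$ to be infinitely divisible---and then you re-run the tail estimate of Lemma~\ref{dist-21} rather than invoking it verbatim, which is in fact the right thing to do since $\psi$ need not be rotationally symmetric (only dominated from below by $f(|\cdot|)$). Your observation about the purely Gaussian exception is also accurate: Lemma~\ref{dist-21} explicitly excludes the normal law, and the corollary must be read with the same proviso.

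The only place you diverge from the paper is in part~b): the paper does not derive it from the main statement but cites Ruegg \cite{Ru70}, who shows directly that any probability density decaying faster than $e^{-c|x|^{1+\alpha}}$ cannot be infinitely divisible. Your route---writing $\Psi=\phi+\tfrac{\sigma^2}{2}|\cdot|^2\geq \tfrac{\sigma^2}{2}|\cdot|^2$ and appealing to the main clause---is a legitimate alternative and keeps the argument self-contained; the citation buys a slightly broader statement (no symmetry or class-$\Nsf$ assumption on $p$ is needed) at the cost of an external reference. Your concern about the $n$-dimensional version of \cite[Theorem~26.1]{Sato} is the same one the paper flags in the proof of Lemma~\ref{dist-21}; it is handled there, not in the corollary.
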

\begin{proof}
    Fix $t>0$ and apply Lemma \ref{dist-21} to the density $p(\xi) := e^{-t\psi(\xi)}/p_t(0)$. Part b) is already contained in \cite{Ru70}, where it is shown that any probability density decreasing faster than $e^{-c|x|^{1+\alpha}}$ as $|x|\to \infty$, cannot be infinitely divisible.
\end{proof}

In Section \ref{table} below we have compiled a list of some known examples of distributions of class $\Nsf$.

\section{The  transition function of certain processes obtained by subordination}\label{sub}

Subordination in the sense of Bochner provides a good tool to get examples and insights. Our first general result for \eqref{intro-e21} to hold, Theorem~\ref{sub-15}, uses subordination. Since the composition of two Bernstein functions is again a Bernstein function, we can understand this theorem as a result for a certain class of subordinate Brownian motions.

\begin{theorem}\label{sub-15}
    Let $(p_t(x))_{t>0}$ be the transition densities of a L\'evy process in $\real$ such that the characteristic functions are of the form $\Ff^{-1} p_t(\xi) = e^{-tf(|\xi|)}$ with some Bernstein function $f$ satisfying $f(0)=0$ and $c_t := \int_0^\infty e^{-tf(r)}\,dr < \infty$ for all $t>0$. Then there exists, for each $t>0$, a complete Bernstein function $g_t$ such that
    $$
        \frac{p_t(x)}{p_t(0)} = e^{-g_t(|x|^2)}.
    $$
\end{theorem}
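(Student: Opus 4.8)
The plan is to identify the ratio $p_t(x)/p_t(0)$ with a normalised Stieltjes function evaluated at $x^2$, and then to pass to the logarithm using two structural facts about Bernstein functions from \cite{SSV}: a non-zero function is a Stieltjes function if and only if its reciprocal is a complete Bernstein function, and $\CBF$ is stable under composition.

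First I would rewrite $p_t$ as a Cauchy mixture. Since $\xi\mapsto f(|\xi|)$ is even, Fourier inversion on $\real$ gives $p_t(x)=\tfrac1\pi\int_0^\infty\cos(x\xi)\,e^{-tf(\xi)}\,d\xi$. As $f$ is a Bernstein function and $u\mapsto e^{-tu}$ is completely monotone, the function $\xi\mapsto e^{-tf(\xi)}$ is completely monotone, bounded by $e^{-tf(0)}=1$, and lies in $L^1(0,\infty)$ by hypothesis; hence by Bernstein's theorem $e^{-tf(\xi)}=\int_{[0,\infty)}e^{-\xi s}\,\eta_t(ds)$ for a probability measure $\eta_t$ on $[0,\infty)$ (this $\eta_t$ is just the time-$t$ law of the subordinator with exponent $f$, and $p_t$ is the corresponding subordinate Cauchy density). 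Monotonicity of $e^{-tf}$ together with $e^{-tf}\in L^1$ forces $\eta_t(\{0\})=0$, and Tonelli's theorem gives $\int_{(0,\infty)}s^{-1}\eta_t(ds)=\int_0^\infty e^{-tf(\xi)}\,d\xi=c_t<\infty$. An application of Fubini's theorem (justified by this bound) together with $\int_0^\infty\cos(x\xi)e^{-\xi s}\,d\xi=s/(s^2+x^2)$ then yields $p_t(x)=\tfrac1\pi\int_{(0,\infty)}s(s^2+x^2)^{-1}\,\eta_t(ds)$ and $p_t(0)=c_t/\pi$, so that $p_t(x)/p_t(0)=Q_t(x^2)$, where $Q_t(\lambda):=c_t^{-1}\int_{(0,\infty)}s(\lambda+s^2)^{-1}\,\eta_t(ds)$ and $Q_t(0)=1$.

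Next I would observe that $Q_t$ is a Stieltjes function: for each fixed $s>0$ the map $\lambda\mapsto s/(\lambda+s^2)$ is a Stieltjes function, and mixing Stieltjes functions against a positive measure again gives a Stieltjes function. Since $Q_t\not\equiv 0$, its reciprocal $h_t:=1/Q_t$ is a complete Bernstein function, with $h_t(0)=1/Q_t(0)=1$. From the integral representation of $h_t\in\CBF$ one reads off that $\tilde h_t:=h_t-1$ is again in $\CBF$, now with $\tilde h_t(0)=0$; since $\lambda\mapsto\ln(1+\lambda)$ belongs to $\CBF$ and $\CBF$ is closed under composition, the function $g_t(\lambda):=\ln h_t(\lambda)=\ln\bigl(1+\tilde h_t(\lambda)\bigr)$ is a complete Bernstein function with $g_t(0)=0$. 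Then $p_t(x)/p_t(0)=Q_t(x^2)=1/h_t(x^2)=e^{-g_t(x^2)}$, which is the assertion.

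The bookkeeping in the first step (that $\eta_t$ is a probability measure with $\int_{(0,\infty)}s^{-1}\eta_t(ds)=c_t$, and the Fubini estimate) is routine once one notices that $e^{-tf}$ is completely monotone. The substance is in the last two steps---that a normalised Stieltjes function is the reciprocal of a complete Bernstein function and that composing with $\ln(1+\cdot)$ stays inside $\CBF$---and the one genuinely delicate move is to subtract the value $h_t(0)=1$ so that $\ln h_t=\ln(1+\tilde h_t)$ with $\tilde h_t\in\CBF$. I would also flag that this argument is specific to $n=1$: for $n\ge 2$ the $n$-dimensional Cauchy kernel $(s^2+|x|^2)^{-(n+1)/2}$ is no longer a Stieltjes function of $|x|^2$, which is presumably why the extension to $n=2,3$ mentioned in the paper needs a separate argument.
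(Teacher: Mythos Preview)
Your proof is correct and follows the paper's approach: both recognise $p_t$ as a subordinate Cauchy density, rewrite $p_t(x)/p_t(0)$ as a Stieltjes function of $|x|^2$, and then pass to the logarithm. The only difference is in the final step: the paper first invokes the fact that Stieltjes functions are infinitely divisible completely monotone (\cite[Proposition 7.11, Lemma 5.7]{SSV}) to get $h_t/h_t(0+)=e^{-g_t}$ with $g_t\in\BF$, and then separately upgrades $g_t$ to $\CBF$ via \cite[Remark 6.11]{SSV}; you instead use the Stieltjes/$\CBF$ duality directly, subtract the constant term, and compose with $\ln(1+\cdot)\in\CBF$ to land in $\CBF$ in one stroke. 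Your route is marginally cleaner, but the two arguments are really two ways of reading the same circle of identities in \cite{SSV}. Your closing remark about $n\geq 2$ is also accurate: the paper's extension to $n=2,3$ abandons the Stieltjes structure and appeals instead to a result of Steutel--van Harn on mixtures with exponents $\leq 2$.
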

\begin{proof}
    For every $t>0$ we know by our assumptions that $\rho_t(x):=e^{-tf(x)}$, $x>0$, is completely monotone, hence $\mathcal L\eta_t(x) = e^{-t f(x)}$ for some convolution semigroup $(\eta_t)_{t\geq 0}$ of probability measures on $\real$ with supports in $[0,\infty)$, i.e.\ a subordinator. Since $f$ is a Bernstein function and $|\xi|$ is the characteristic exponent of a Cauchy process, $f(|\xi|)$ is the characteristic exponent of a subordinate Cauchy process. Therefore,
    \begin{align*}
        p_t(x)
        = \frac {1}{\pi} \int_0^\infty \frac s{s^2+ x^2}\,\eta_t(ds)
    \end{align*}
    where we used that $x\mapsto \frac s{\pi\,(s^2+x^2)}$, $s>0$, is the transition density of the Cauchy process.

    Denote by $\eta^*_t$ the pull-back of the measure $s\,\eta_t(ds)$ under the map $J:s\mapsto s^2$. Then
    $$
        p_t(x) = \frac 1{\pi} \int_{[0,\infty)} \frac 1{s+ u}\,\eta^*_t(ds)\bigg|_{u=|x|^2}
        = h_t(u)\big|_{u=|x|^2}
    $$
    where $h_t\in\mathcal S$ is a Stieltjes function. In particular, $h_t$ is an infinitely divisible completely monotone function, cf.\ \cite[Proposition 7.11 and Definition 5.6]{SSV}. Since $h_t(0+) = p_t(0) < \infty$, we can use
    \cite[Lemma 5.7]{SSV} to see that there is some Bernstein function $g_t$ such that
    $$
        \frac{h_t}{h_t(0+)} = e^{-g_t}
        \qquad\text{or}\qquad
        \frac{p_t(x)}{p_t(0)}=e^{-g_t(u)}\bigg|_{u=|x|^2}.
    $$

    Fix $t>0$. Since $h_t\in\mathcal S$ and $h_t(0+) < \infty$, we see that $e^{g_t} = \frac 1{h_t} \in \CBF$ and $\frac 1{h_t(0+)} > 0$. Since we know already that $g_t\in\BF$, we can use \cite[Remark 6.11]{SSV} to conclude that $g_t\in\CBF$ with the representation
    $$
        g_t(s) = \beta_t + \int_0^\infty \frac{s}{s+r}\,\frac{\eta_t(r)}{r}\,dr
    $$
    where $\eta_t : (0,\infty)\to[0,1]$ is measurable.
\end{proof}

\begin{remark}
\textbf{a)}
    Theorem 9.5 in \cite{SSV} shows that the measure $\pi_t$ satisfying $\mathcal L\pi_t = h_t$ is a mixture of exponential distributions.

\medskip\noindent\textbf{b)}
    The converse of Theorem~\ref{sub-15} is, in general, not true. Consider, for example, the function $r\mapsto r^\alpha$, $r>0$, with $\frac{1}{2}<\alpha<1$; this is a complete Bernstein function, but the probability density given by $p(x)= e^{-|x|^{2\alpha}} \big/ \int_\real e^{-|y|^{2\alpha}}\,dy$ is not infinitely divisible, cf.\ Lemma~\ref{dist-21}.

\medskip\noindent\textbf{c)}
    Theorem \ref{sub-15} admits a generalization to dimensions $n \leq 3$. Let $f$ and $\eta_t$ be as in Theorem \ref{sub-15} and assume that $(p_t(x))_{t>0}$ are the transition densities of an $n$-dimensional L\'evy process
    such that $\mathcal F^{-1}p_t(\xi) = e^{-tf(|\xi|)}$.

    As in the proof of Theorem \ref{sub-15} we see that $p_t(x)$ can be written as a mixture of $n$-dimensional Cauchy distributions:
    $$
        p_t(x) = \frac{\Gamma\big(\frac{n+1}2\big)}{\pi^{\frac{n+1}2}} \int_{[0,\infty)} \frac{\lambda}{(\lambda^2+|x|^2)^{\frac{n+1}2}}\,\eta_t(d\lambda).
    $$
    In particular,
    \begin{equation}\label{sec7-eq1}
        \frac{p_t(x)}{p_t(0)}
        = \int_{[0,\infty)} \left(\frac{\lambda^2}{\lambda^2 + |x|^2}\right)^{\frac{n+1}2} \frac{\lambda^{-n}\,\eta_t(d\lambda)}{\int_{[0,\infty)}\lambda^{-n}\,\eta_t(d\lambda)}
        =: \frac{h_t(|x|^2)}{h_t(0+)}.
    \end{equation}
    If $\frac{n+1}{2}\leq 2$, i.e.\ if $n\leq 3$, we know from \cite[Theorem VI.4.9]{SH04} that $h_t(s)/h_t(0+)$, $t>0$, is an infinitely divisible completely monotone function. Now we can follow the argument of the proof of Theorem \ref{sub-15}. Note that Theorem~\ref{sub-15} tells us more: if $n=1$ the function $g_t(x)$ is a complete Bernstein function, which does not follow from \eqref{sec7-eq1}.
\end{remark}

\section{Further examples of processes related to mixtures}\label{proc}

In this section we give a few classes of transition probabilities $(p_t(x))_{t>0}$ where, for each $t>0$, $p_t(x)$ belongs to the class $\Nsf$. These densities are obtained as variance-mean mixtures of the type \eqref{dist-e06} with certain probability measures.

Let $\gauss_t(x)$ be the standard $n$-dimensional Gaussian density
$$
    \gauss_t(x)=\frac{1}{(2\pi t)^\frac{n}{2}}\, e^{-\frac{|x|^2}{2t}},\quad t>0,\; x\in\rn
$$
and observe that
\begin{equation}\label{proc-e42}
    \frac{\gauss_t(x)}{\gauss_t(0)}
    = e^{-\frac{|x|^2}{2t}}
    =\left(\frac{\gauss_1(x)}{\gauss_1(0)}\right)^{\frac{1}{2t}}
    =\Ff \bigl[\gauss_{\frac{1}{2t}}\bigr](x).
\end{equation}
The following proposition should be compared with Theorem~\ref{dist-09}.
\begin{proposition}\label{proc-41}
    Assume that, for each $t>0$, $m_t(ds) $ is an infinitely divisible probability measure on $[0,\infty)$. Denote by $n_t(ds):=\big(\frac{s}{\pi}\big)^{\frac{n}{2}}\,G_\sharp m_t(ds)$ where $G_\sharp m_t$ denotes the pull-back with respect to the map $G: s\mapsto (2s)^{-1}$. If the probability measure $n^*_t(ds):=n_t(ds)/\mathcal{L}[n_t](0)$ is infinitely divisible, i.e.\ if
    \begin{equation*}
        \mathcal{L}[n^*_t](\lambda)=e^{-f_t(\lambda)}
    \end{equation*}
    for some Bernstein function $f_t$, then the variance-mean mixture
    \begin{equation*}
        p_t(x):=\int_0^\infty \gauss_s(x)\,m_t(ds)
    \end{equation*}
    is infinitely divisible, and it is of the form
    \begin{equation*}
        p_t(x) = p_t(0)\,e^{-f_t(|x|^2)}.
    \end{equation*}
\end{proposition}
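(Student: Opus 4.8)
The plan is to reduce the statement to two standard facts: that a centred‑Gaussian variance mixture with an infinitely divisible mixing measure is infinitely divisible, and that a probability measure on $[0,\infty)$ is infinitely divisible precisely when its Laplace transform equals $e^{-(\text{Bernstein function})}$. The bridge between them is a single change of variables that exhibits $p_t(x)$ as a one‑sided Laplace transform evaluated at $|x|^2$.

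First I would substitute $u=(2s)^{-1}$ in
$$
    p_t(x) = \int_{[0,\infty)} \gauss_s(x)\,m_t(ds) = \int_{[0,\infty)} (2\pi s)^{-n/2}\, e^{-|x|^2/(2s)}\,m_t(ds).
$$
The map $G\colon s\mapsto (2s)^{-1}$ is an involution of $(0,\infty)$, and under it the Gaussian normalising constant $(2\pi s)^{-n/2}$ becomes the weight $(u/\pi)^{n/2}$. Since all integrands are nonnegative, Tonelli's theorem lets us carry the substitution through the $m_t$‑integral, giving $p_t(x) = \int_{[0,\infty)} e^{-u|x|^2}\,n_t(du) = \mathcal L n_t(|x|^2)$ with $n_t(du)=(u/\pi)^{n/2}\,G_\sharp m_t(du)$, exactly the measure in the statement. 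Taking $x=0$ yields $\mathcal L n_t(0)=n_t([0,\infty))=\int \gauss_s(0)\,m_t(ds)=p_t(0)$, which I take to be finite (otherwise the assertion is vacuous). Hence $n^*_t=n_t/p_t(0)$ is a probability measure on $[0,\infty)$ and
$$
    \frac{p_t(x)}{p_t(0)}=\frac{\mathcal L n_t(|x|^2)}{\mathcal L n_t(0)}=\mathcal L n^*_t(|x|^2).
$$
By hypothesis $n^*_t$ is infinitely divisible, i.e.\ $\mathcal L n^*_t(\lambda)=e^{-f_t(\lambda)}$ for some Bernstein function $f_t$ with $f_t(0)=0$; this immediately gives $p_t(x)=p_t(0)\,e^{-f_t(|x|^2)}$, the claimed representation.

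To see that $p_t$ is infinitely divisible, I would compute its characteristic function: exchanging the two integrations (Fubini, $\gauss_s$ being a probability density) gives
$$
    \Ff^{-1} p_t(\xi) = \int_{[0,\infty)} e^{-s|\xi|^2/2}\,m_t(ds) = \mathcal L m_t\Big(\tfrac{|\xi|^2}{2}\Big).
$$
Since $m_t$ is an infinitely divisible probability measure on $[0,\infty)$, $\mathcal L m_t(\lambda)=e^{-\Phi_t(\lambda)}$ for a Bernstein function $\Phi_t$ with $\Phi_t(0)=0$; as Bernstein functions operate on continuous negative definite functions and $\xi\mapsto |\xi|^2/2$ is continuous negative definite, so is $\xi\mapsto\Phi_t(|\xi|^2/2)$. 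Thus $\Ff^{-1}p_t$ is an infinitely divisible characteristic function, so $p_t$ is infinitely divisible.

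I do not expect a real obstacle. The only step that is not purely formal is spotting the involutive substitution $s\mapsto(2s)^{-1}$, chosen precisely so that $(2\pi s)^{-n/2}$ collapses into the polynomial weight $(u/\pi)^{n/2}$; once $p_t(x)=\mathcal L n_t(|x|^2)$ is established, everything else is bookkeeping plus the two standard facts above. If one prefers not to quote the Bernstein‑function characterisation as a black box, one can argue instead---as in the proof of Theorem~\ref{sub-15}---that $\lambda\mapsto\mathcal L n^*_t(\lambda)$ is completely monotone and equals $1$ at $0$, and that the assumed infinite divisibility of $n^*_t$ is exactly what forces it to be of the form $e^{-f_t}$ with $f_t\in\BF$, cf.\ \cite[Lemma~5.7]{SSV}.
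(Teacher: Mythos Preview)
Your proof is correct and follows essentially the same route as the paper: both arguments hinge on the change of variable $s\mapsto(2s)^{-1}$ to rewrite $p_t(x)$ as $\mathcal L[n_t](|x|^2)$, after which the hypothesis on $n^*_t$ gives the representation directly. The paper phrases the substitution via the identity $\gauss_s(x)=\gauss_s(0)\,\Ff[\gauss_{1/(2s)}](x)$ but the computation is identical; your explicit justification of the infinite divisibility of $p_t$ through $\Ff^{-1}p_t(\xi)=\mathcal L m_t(|\xi|^2/2)=e^{-\Phi_t(|\xi|^2/2)}$ is a small addition that the paper leaves implicit.
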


\begin{proof}
Using \eqref{proc-e42} we see
\begin{align*}
    p_t(x)
    &=\int_{[0,\infty)} \gauss_s(x)\,m_t(ds)\\
    &= \int_{[0,\infty)} \Ff\bigl[\gauss_{\frac{1}{2s}}\bigr](x)\, \gauss_s(0)\, m_t(ds)\\
    &= \int_{[0,\infty)} \Ff\bigl[\gauss_\tau\bigr](x)\,\gauss_{G^{-1}(\tau)}(0)\, G_\sharp m_t(d\tau)\\
    &= \int_{[0,\infty)} e^{-\tau |x|^2} \left(\frac{\tau}{\pi}\right)^{\frac{n}{2}}\, G_\sharp m_t(d\tau)\\
    &= \frac{\mathcal{L}[n_t](0)}{\pi^{\frac{n}{2}}} \, \mathcal{L}\bigl[n^*_t\bigr]\bigl(|x|^2\bigr)\\
    &= p_t(0) e^{-f_t(|x|^2)}.
    \qedhere
\end{align*}
\end{proof}

\begin{example}\label{proc-43}
\textbf{a)}
    Let $(\phi_t)_{t>0}$ be a family of completely monotone functions and set $n_t(s):=s^{-3/2} \phi_t(s)$ and which is normalized to become a probability density $n^*_t(s):= n_t(s)/\mathcal{L}[n_t](0)$. If each $n^*_t$ is an infinitely divisible probability density, then the probability density obtained by mixing
    \begin{equation}\label{proc-e52}
        p_t(x)=\int_{\mathbb{R}} \gauss_s(x)\,n^*_t(s)\,ds
    \end{equation}
    is infinitely divisible, and it is of the form
    \begin{equation*}
        p_t(x)=p_t(0) e^{-f_t(|x|^2)},
    \end{equation*}
    where $f_t\in\CBF$ for each $t>0$. Moreover,
    \begin{equation}\label{proc-e56}
        f_t(s) = \int_0^\infty \frac{s}{s+r}\,\frac{\eta_t(r)}{r}\,dr
    \end{equation}
    where $\eta_t : (0,\infty)\to [0,1]$ is a measurable function.

    \medskip\emph{Indeed:} The fact that $p_t(x)$ is infinitely divisible follows from \eqref{proc-e52} and the infinite divisibility of $n^*_t$ is infinitely divisible. We can rewrite $p_t(x)$ in the following way:
    \begin{align*}
        p_t(x)
        &=\frac{1}{\mathcal{L}[n_t](0)} \int_0^\infty \gauss_s(x)\,s^{-\frac{3}{2}}\, \phi_t\left(\frac{1}{2s}\right) ds\\
        &= \frac{\sqrt{2}}{\sqrt{\pi}\mathcal{L}[n_t](0)} \int_0^\infty e^{-x^2 r} \phi_t(r)\,dr\\
        &= \frac{\sqrt{2}}{\sqrt{\pi}\mathcal{L}[n_t](0)}\,\mathcal{L}[\phi_t](|x|^2).
    \end{align*}
    Since $\phi_t$ is completely monotone, its Laplace transform $\mathcal{L}[\phi_t]$ is a Stieltjes function and our calculation shows that $\infty > p_t(0) = \mathcal{L}[\phi_t](0+)$. Therefore, $1/\mathcal{L}[\phi_t]$ is a complete Bernstein function satisfying $1/\mathcal{L}[\phi_t](0+)>0$. By \cite[Remark 6.11]{SSV} we find that
    $$
        \frac 1{\mathcal{L}[\phi_t](s)} = \text{const}\cdot e^{-f_t(s)}
    $$
    where $f_t(s)$ is a complete Bernstein function of the form \eqref{proc-e56}.

\medskip\noindent\textbf{b)}
    Let $p_t(x)$ be the mixture of a Laplace density with a completely monotone density, i.e.
    \begin{equation}
        p_t(x)
        = \int_0^\infty \frac{s}{2} \, e^{-s|x|}\phi_t(s)\,ds, \quad t>0,
    \end{equation}
    where $\phi_t$ is, for each $t>0$, a completely monotone probability density.  Then
    \begin{equation}\label{proc-e57}
        p_t(x) = c_t \, e^{-g_t(|x|)},
    \end{equation}
    where $c_t>0$ is some constant, depending on $t$, and $g_t$ is  for each $t>0$ a   Bernstein function.

\medskip
    \emph{To see this}, recall that the mixture of Laplace densities is an infinitely divisible probability density, see
    \cite[Theorem IV.10.1]{SH04}. Since $\phi_t(s)$ is completely monotone, $\frac s2\, \phi_t(s)/\int_0^\infty \frac s2\, \phi_t(s)\,ds$ is an infinitely divisible probability density, see \cite[Corollary VI.4.6]{SH04}.  Thus,  $p_t(x)$ is of the form \eqref{proc-e57}, since it is (up to a constant) a Laplace transform  of an infinitely divisible density.
\end{example}

\section{Towards a geometric understanding of transition functions of Feller processes}\label{fel}

In this short section we propose a geometrical approach to understand transition functions of more general processes. This is more of a programme for further studies which should have geometric interests in its own right.

Recall that transition functions for diffusions generated by a second order elliptic differential operator are best understood when using the Riemannian metric associated with the principal part of the generator. Moreover, diffusions generated by subelliptic second order differential operators should be studied in the associated sub-Riemannian geometry. A similar remark applies to diffusions defined on (or defining a) metric measure space. We refer to \cite{Gri1}, \cite{Gri2} and \cite{St} and the references therein.

Let $(X_t)_{t\geq 0}$ be a Feller process, i.e.\ a Markov process such that the semigroup
\begin{equation*}
    T_t u(x) = \Ee^x u(X_t) = \int_\rn u(y)\,p_t(x,dy),\quad u\in C_\infty(\rn)
\end{equation*}
has the Feller property: it preserves the space $C_\infty(\rn)$ of continuous functions vanishing at infinity. We assume that the kernel $p_t(x,dy)$ has a density which we denote, by some abuse of notation, again by $p_t(x,y)$. If the domain $D(A)$ of the generator $A$ of the Feller semigroup $(T_t)_{t\geq 0}$ contains the test functions $C_c^\infty(\rn)$, then $A$ is a pseudo-differential operator with negative definite symbol, i.e.
\begin{equation*}
    Au(x)
    = -q(x,D)u(x)
    = -\int_\rn e^{ix\xi} q(x,\xi)\,\Ff u(\xi)\,d\xi,\quad u\in\mathcal S(\rn),
\end{equation*}
where $q:\rn\times\rn\to\comp$ is a locally bounded function such that $q(x,\cdot)$ is, for every $x\in\rn$, a continuous negative definite function. Throughout we assume that $q(x,\xi)$ is real-valued. We refer to \cite{J2, J3} where many examples of this kind are studied. Moreover, we refer to \cite{J} and, in particular, to \cite{Sch,Sch-Sch} where $q(x,\xi)$ was calculated as
\begin{equation*}
    q(x,\xi) = - \lim_{t\to 0} \frac{\Ee^x\left[e^{i\xi(X_t-x)} - 1\right]}{t}.
\end{equation*}
In \cite{BB1,BB2} B.\ B\"{o}ttcher has proved that for a large class of operators $-q(x,D)$ the symbol of $T_t$
\begin{equation*}
    \lambda_t(x,\xi) = e^{-ix\xi}\,\Ee^x\left[e^{i\xi X_t}\right],\quad t>0,\; x,\xi\in\rn,
\end{equation*}
is asymptotically given by
\begin{equation*}
    \lambda_t(x,\xi) e^{-tq(x,\xi)} + r_0(t;x,\xi)
    \quad\text{as}\quad t\to 0,
\end{equation*}
where $r_0(t;x,\xi)$ tends for $t\to 0$ weakly to zero in the topology of a certain symbol class.

The techniques employed in \cite{J-Sch} will also yield for certain elliptic differential operators $-L(x,D)$ generating the semigroup $(T_t)_{t\geq 0}$ that for the subordinate semigroup $(T_t^f)_{t\geq 0}$ it holds
\begin{equation*}
    \lambda_t(x,\xi)
    =e^{-t f(L(x,\xi))} + \tilde r_0(t;x,\xi)
    \quad\text{as $t\to 0$},
\end{equation*}
and $f(L(x,\xi))$ is, in a certain sense, an approximation of the symbol of $f(L(x,D))$.

Thus we have in several non-trivial cases
\begin{equation*}
    T_t u(x) = \int_\rn e^{ix\xi} e^{-tq(x,\xi)}\,\Ff u(\xi)\,d\xi + \cdots
\end{equation*}
and if $T_t u(x) = \int_\rn p_t(x,y) u(y)\,dy$ we might try as an approximation for $p_t(x,y)$
\begin{equation*}
    p_t(x,y) = \int_\rn e^{i(x-y)\xi} e^{-tq(x,\xi)}\,d\xi + \cdots
\end{equation*}
Now let us assume that for every fixed $x\in\rn$ the continuous negative definite function $q(x,\cdot)$ belongs to $\mcn(\rn)$. In this case we would find
\begin{gather*}
    p_t(x,y) = \int_\rn e^{i(x-y)\xi} e^{-t d^2_{q(x,\cdot)}(\xi,0)} \,d\xi + \cdots
\end{gather*}
and, in particular,
\begin{equation*}
\begin{aligned}
    p_t(x,x)
    &= \int_\rn e^{-t q(x,\xi)}\,d\xi + \cdots\\
    &= \int_\rn e^{-t d^2_{q(x,\cdot)}(\xi,0)} \,d\xi + \cdots
\end{aligned}
\end{equation*}
where $d^2_{q(x,\cdot)}(\xi,\eta) = \sqrt{q(x,\xi-\eta)}$. Moreover, we might think to search for $p_t(x,y)$ an expression of the form
\begin{equation}\label{fel-e22}
    p_t(x,y)
    = p_t(x,x)\, e^{-\delta^2(t,x_0;x,y)}+\cdots,\quad x,y\in B^{d_{q(x_0,\cdot)}}(x_0,\epsilon),
\end{equation}
with a suitable metric $\delta(t,x_0;\cdot,\cdot)$ associated with $q(x_0,\xi)$ in the sense of Section \ref{ptx}.

Thus we propose to switch from $(\rn,d_\psi)$ and $(\rn,\delta_{\psi,t}(\cdot,\cdot)$ to generalizations of Riemannian manifolds: Take $\rn$ with its standard differentiable structure and identify the tangent space $T_x\rn \simeq \rn$. Consider now the families of metrics
\begin{equation*}
    \begin{aligned}
        d_{q(x,\cdot,\cdot)}& : \rn\times\rn \to \real\\
                         &(\xi,\eta)\mapsto \sqrt{q(x,\xi-\eta)}
    \end{aligned}
    \qquad\text{and}\qquad
    \begin{aligned}
        \delta(t,x;\cdot,\cdot)& : \rn\times\rn \to \real\\
                                  &(y,z)\mapsto \delta(t,x;y,z)
    \end{aligned}
\end{equation*}
($\delta(t,x;\cdot,\cdot)$ as proposed in \eqref{fel-e22}) and start to study the corresponding geometric structures. Note that in case that $d_{q(x,\cdot,\cdot)}$ or $\delta(t,x;\cdot,\cdot)$ are related to a continuous negative definite function, our objects to study are manifolds $\rn$ with tangent spaces equipped with a metric such that they allow an isometric embedding into a Hilbert space which is, in general, infinite dimensional.

\newpage\vfill
\begin{small}
\begin{landscape}
\section{A list of probability distributions of class $\Nsf$}\label{table}
\vfill

\begin{center}
\begin{tabular}{|p{37mm}|p{36mm}|p{47mm}|p{47mm}|p{48mm}|}
 \hline
     $X_t$ & $\Ee e^{i\xi \cdot X_t} $& $p_t(x) $& $\psi(\xi)$ & $-\ln \bigl[p_t(x)\big/p_t(0) \bigr]\vphantom{\Bigg]}$\\
      \hline
      {\footnotesize Generalized hyperbolic, $n=1, t=1$. \cite{BKS}}
      & {\footnotesize$\Bigl[\frac{\kappa^2}{\kappa^2+\xi^2}\Bigr]^{\frac{\lambda\vphantom{\int\limits^f}}{2}}
      \frac{K_\lambda\bigl(\delta\sqrt{\kappa^2+\xi^2}\bigr)}{K_\lambda(\delta \kappa)}$}
      & {\footnotesize$\frac{(\frac\kappa\delta)^{\lambda}}{(2\pi)^{\frac {n\vphantom\int}2}K_\lambda(\delta \kappa)}
      \frac{K_{\lambda-n/2} \bigl(\kappa\sqrt{\delta^2+x^2}\bigr)}
      {\bigl(\sqrt{\delta^2+x^2}\,\kappa^{-1}\bigr)^{\frac{n\vphantom\int}2-\lambda}}$}
      & {\footnotesize$-\ln \left[\Bigl[\frac{\kappa^2}{\kappa^2+\xi^2}\Bigr]^{\frac{\lambda}{2}}
      \frac{K_\lambda\bigl(\delta\sqrt{\kappa^2+\xi^2}\bigr)}{K_\lambda(\delta \kappa)}\right]$}
      & {\footnotesize$-\ln \Biggl[\frac{(\frac\kappa\delta)^{\frac{n}{2}+\lambda}K_{\lambda-n/2} \bigl(\kappa \sqrt{\delta^2+x^2}\bigr)}{\bigl(\sqrt{\delta^2+x^2}\,\kappa^{-1} \bigr)^{\frac {n\vphantom\int}2-\lambda}} \Biggr]$}
      \\
\hline
        \footnotesize Normal $N(0,1)$,\newline $F(du)=\delta_t(u)$ & $e^{-t|\xi|^2}$ & $(2\pi t)^{-n/2} e^{-\frac{|x|^2}{2t}}$& $|\xi|^2$ & $\frac{|x|^2}{2t}$ \\
\hline\footnotesize
    Cauchy in $\mathbb{R}$, $n=1$,\newline $\lambda=-1, \alpha=0, \delta=t$.\newline \cite[p.\ 116]{KPST83}
    & $e^{-t|\xi|}$
    & $\frac{1}{\pi} \frac{t}{|x|^2 +t^2}$
    & $|\xi|$
    & $\ln\Big(\frac{\xi^2+t^2}{t^2}\Big)$
    \\
\hline\footnotesize
    Laplace in $\mathbb{R}$, $n=1$,\newline $\lambda=1, \alpha=t, \delta=0$.\newline \cite[p.\ 116]{KPST83}
    & $\frac{t^2}{\xi^2+t^2}$
    & $\frac{t}{2} e^{-t|x|}$
    & $\ln\Big(\frac{\xi^2+t^2}{t^2}\Big)$
    & $t|x|$
    \\
\hline\footnotesize
        Hyperbolic, $t=n=\lambda=1$, \cite[pp.\ 415-6]{J3}
        & $ \frac{\alpha}{K_1(\alpha \delta )} \frac{K_1\big(\delta  \sqrt{\alpha^2+\xi^2}\big)}{\sqrt{\alpha^2+\xi^2}}$
        & $\frac{1}{2\delta K_1(\alpha \delta )} e^{-\alpha \sqrt{\delta^2+x^2}}$
        & $-\ln \left[ \frac{\alpha}{K_1(\alpha \delta )} \frac{K_1\big(\delta  \sqrt{\alpha^2+\xi^2}\big)}{\sqrt{\alpha^2+\xi^2}} \right] $
        & $ \alpha(\sqrt{\delta^2+x^2}-\delta)$
        \\
\hline\footnotesize
        Relativistic Hamiltonian, $n=1, \lambda=-\frac{1}{2}, \delta=t$. \cite[p.\ 182]{J1}
        & $e^{-t \big( \sqrt{\alpha^2+\xi^2}-\alpha\big)}$
        & $\frac{\alpha \delta e^{a \delta} }{\pi} \frac{K_1\big(\alpha \sqrt{\delta^2 +x^2}\big)}{\sqrt{\delta^2 +x^2}}$
        & $\sqrt{\alpha^2+\xi^2}-\alpha$
        & $-\ln \left[ \frac{\delta K_1\big(\alpha \sqrt{\delta^2+x^2}\big)}{K_1(\alpha \delta)} \right]$
        \\
\hline\footnotesize
        Meixner process $C_t$ in $\mathbb{R}$
        \cite[p.\ 312]{PY03}
        & $\big(\frac{1}{\cosh \xi}\big)^t$
        & $ \frac{2^{t-1}}{\pi\Gamma(t)} |\Gamma(\frac{t+i\xi}{2})|^2$
        & $\ln \cosh \xi$
        & $  -\ln \Big| \frac{\Gamma(\frac{t+i\xi}{2})}{\Gamma(\frac{t}{2})}\Big|^2$
        \\[\bigskipamount]
        \footnotesize $t=1$
        &  $\frac{1}{\cosh \xi}$
        &  $\frac{1}{2 \cosh (\frac{\pi x}{2})}$
        &  $\ln \cosh \xi$
        &  $ \ln ( \cosh (\frac{\pi x}{2}))$
        \\[\bigskipamount]
        \footnotesize $t=2$
        & $\big(\frac{1}{\cosh \xi}\big)^2$
        & $\frac{x}{2 \sinh (\frac{\pi x}{2})}$
        & $\ln \cosh \xi$
        &  $\ln \frac{ \sinh (\frac{\pi x}{2})}{\frac{\pi x}{2}} $
        \\
\hline
        \footnotesize \cite[p.\ 312]{PY03}, $t=1$
        & $\frac{\xi}{\sinh \xi}$
        & $\frac{\pi}{4 \cosh^2 (\frac{\pi x}{2})}$
        &$\ln \frac{\sinh \xi}{\xi}$
        & $2 \ln \cosh (\frac{\pi x}{2})$
  \\[\bigskipamount]
        $t=2$
        & $\big(\frac{\xi}{\sinh \xi}\big)^2$
        & $\frac{\frac{\pi}{2} (\frac{\pi x}{2} \coth(\frac{\pi x}{2})-1)}{\sinh^2 (\frac{\pi x}{2})}$
        & $\ln \big(\frac{\sinh \xi}{\xi}\big)$
        & $-\ln \left[\frac{\frac{3\pi x}{2} \coth(\frac{\pi x}{2})-3}{\sinh^2 (\frac{\pi x}{2})}\right]$
        \\
  \hline
\end{tabular}
\end{center}

\vfill\mbox{}

\end{landscape}
\end{small}
\vfill\newpage


\begin{thebibliography}{99}

\bibitem{bar-gri-kum}
    Barlow, M.T., Grigor'yan, A., Kumagai, T.: Heat kernel upper bounds for jump processes and the first exit time. \emph{J.\ reine angew.\ Math.} \textbf{626} (2009), 135--157.

\bibitem{BNH}
        Barndorff-Nielsen, O.E., Halgreen, C.: Infinite divisibility of the hyperbolic and generalized inverse Gaussian distribution. \emph{Z.\ Wahrscheinlichkeitstheor.\ verw.\ Geb.} \textbf{38} (1977), 309--312.

\bibitem{BKS}
        Barndorff-Nielsen, O.E., Kent, J., S\"{o}rensen, M.:  Normal variance-mean mixtures and $z$ distributions. \emph{Int.\  Stat.\ Review} \textbf{50} (1982),  145--159


\bibitem{BL}
        Benyamini, Y.,  Lindenstrauss, J.: \emph{Geometric Nonlinear Functional Analysis. Vol.\ 1}. Am.\ Math.\ Soc., Providence (RI) 2000.

\bibitem{BF}
        Berg, C., Forst, G.: \emph{Potential Theory on Locally Compact Abelian Groups}.
        Springer-Verlag, New York  1975.

\bibitem{BB2}
        B\"ottcher, B.: A parametrix construction for the fundamental solution of the evolution equation associated with a pseudo-differential operator generating a Markov process. \emph{Math.\ Nachr.} \textbf{278} (2005), 1235--1241.


\bibitem{BB1}
        B\"ottcher, B.: Construction of time inhomogeneous Markov processes via evolution equations
        using pseudo-differential operators. \emph{J.\ London Math.\ Soc.} \textbf{78}  (2008),  605--621.

\bibitem{Bo92}
        Bondesson, L.:  \emph{Generalized Gamma Convolutions and Related Classes of Distributions and Densities}.
        Lect.\ Notes Stat.\ \textbf{76},  Springer-Verlag, New York  1992.


\bibitem{BH}
        Bouleau, N., \ Hirsch, F.: \emph{Dirichlet Forms and Analysis on Wiener Space}.
        Walter de Gruyter,  Berlin 1991.

\bibitem{B}
        Breiman, L.:    \emph{Probability}.  Addison-Wesley, Reading (MA) 1968.

\bibitem{che-kum}
    Chen, Z.-Q., Kumagai, T.: Heat kernel estimates for stable-like processes on d-sets. \emph{Stochastic Processes and their Applications}, \textbf{208} (2003), 27--62.

\bibitem{che-kum08}
  Chen, Z.-Q., Kumagai, T.: Heat kernel estimates for jump processes of mixed types on metric
	measure spaces. \emph{Probab.\ Theor.\ Rel.\ Fields} \textbf{140} (2008), 277--317.

\bibitem{CW}
    Coifman, R., Weiss, G.:
    \emph{Analyse harmonique non-commutative sur certains espaces homog\`enes. \'Etude de certaines int\'egrales singuli\`eres}. Springer, Lecture Notes in Mathematics vol.\ \textbf{242}, Berlin 1971.

\bibitem{erd-et-al}
    Erd\'elyi, A.\ et al., \emph{Higher Transcendental Functions 1, 2, 3 (The Bateman Manuscript Project)}. McGraw-Hill, New York, 1953.

\bibitem{FOT}
        Fukushima, M., Oshima, Y., Takeda, M.: \emph{Dirichlet Forms and Symmetric Markov Processes}.
        Walter de Gruyter,  Berlin 1994.

\bibitem{G}
        Grigelionis, B.: Generalized $z$-distributions and related stochastic processes. \emph{Lithuanian Math.\ J.},
        \textbf{41}  (2001),  239--251.

\bibitem{Gri1}
    Grigor'yan, A.:
    Analytic and geometric background of recurrence and non-explosion of the Brownian motion on Riemannian manifolds.  \emph{Bull.\ Am.\ Math.\ Soc.\ (N.S.)} \textbf{36} (1999), 135--249.

\bibitem{Gri2}
    Grigor'yan, A.:
    \emph{Heat Kernels and Analysis on Manifolds}. Am.\ Math.\ Soc., Providence (RI) \& International Press, Boston (MA) 2009.

\bibitem{gri-hu}
    Grigor'yan, A., Hu, J.: Upper bounds of heat kernels on doubling spaces. \emph{Preprint} (2010), 58 pp.

\bibitem{gri-hu-lau}
    Girgor'yan, A., Hu, J., Lau, K.-S.: Heat kernels on metric spaces with doubling measure. In: \emph{Proc.\ Conf.\ on Fractal Geometry in Greifswald IV}. Birkhaeuser, 2009, 3--44.

\bibitem{gri-hu-lau10}
    Grigor'yan, A., Hu, J., Lau, K.-S.: Comparison inequalities for heat semigroups and heat kernels on metric measure spaces. \emph{J.\ Funct.\ Anal.} \textbf{259} (2010), 2613--2641.

\bibitem{H}
        Heinonen, J.: \emph{Lectures on Analysis on Metric Spaces}. Springer-Verlag, New York  2001.

\bibitem{H95}
        Hoh, W.: Pseudo differential operators with negative definite symbols and the martingale problem.
        \emph{Stoch. and Stoch. Rep.}  \textbf{55} (1995), 225--252.

\bibitem{H98a}
        Hoh, W.: A symbolic calculus for pseudo-differential operators generating Feller semigroups.         \emph{Osaka J. Math.}  \textbf{35} (1998), 798--820.

\bibitem{H98b}
        Hoh, W.: \emph{Pseudo differential operators generating Markov processes}.
        Habilitationsschrift, Universit\"at Bielefeld, Bielefeld 1998.

\bibitem{J94}
        Jacob, N.: A class of Feller semigroups generated by pseudo-differential operators.
        \emph{Math. Z.} \textbf{215} (1994), 151--166.

\bibitem{J}
     Jacob, N.: Characteristic functions and symbols in the theory of Feller processes.
     \emph{Potential Anal.} \textbf{8} (1998), 61--68.

\bibitem{J1}
        Jacob, N.:  \emph{Pseudo-Differential Operators and Markov Processes.
        Vol.~1: Fourier Analysis and Semigroups}. Imperial College Press, London 2001.

\bibitem{J2}
        Jacob, N.:  \emph{Pseudo-Differential Operators
        and Markov Processes. Vol.~2:  Generators and Their Potential Theory}.
        Imperial College Press, London 2002.

\bibitem{J3}
        Jacob, N.: \emph{Pseudo-Differential Operators and Markov Processes. Vol.~3: Markov Processes and Applications}. Imperial College Press, London 2005.

\bibitem{J-Sch}
     Jacob, N., Schilling, R.L.: Subordination in the sense of S.\ Bochner -- An
     approach through pseudo differential operators. \emph{Math.\ Nachr.} \textbf{178} (1996), 199--231.

\bibitem{KS1}
        Knopova, V., Schilling, R.L.: A note on the existence of transition probability densities for L\'evy processes. To appear in \emph{Forum Math.}

\bibitem{KPST83}
        Koroliouk, V.,  Portenko, N.,  Skorokhod, A., Tourbine, A.: \emph{Aide-m\'emoire de th\'eorie des probabilit\'es et de statistique math\'ematique}. Editions Mir, Moscow 1983.

\bibitem{SL}
        Landwehr, S.: \emph{On the Geometry Related to Jump Processes}.
        PhD Thesis, Swansea  2010.

\bibitem{meyer}
        Meyer, P.A.: Demonstration probabiliste de certaines inegalit\'es de Littlewood-Paley. Expos\'e II: L'op\'erateur carr\'e du champ. Springer Lecture Notes in Mathematics vol.\ 511, \emph{S\'em.\ Probab.} \textbf{10} (1976) 142--163.

\bibitem{PY03}
        Pitman J., Yor, M.: Infinitely divisible laws associated with hyperbolic functions.
        \emph{Canad.\ J.\ Math.} \textbf{55} (2003), 292--330.

\bibitem{Ru70}
    A. Ruegg: A characterization of certain infinitely divisible laws. \emph{Ann.\ Math.\ Statist.} \textbf{41} (1970), 1354--1356.

\bibitem{Sato}
    Sato, K.: \emph{L\'evy Processes and Infinitely Divisible Distributions}.
    Cambridge University Press, Cambridge 1999.

\bibitem{Sch}
     Schilling, R.L.:
     Conservativeness of semigroups generated by pseudo
     differential operators.
     \emph{Potential Anal.}
     \textbf{9} (1998), 91--104.

\bibitem{Sch-Sch}
    Schilling, R.L., Schnurr, A.:
    The symbol associated with the solution of a stochastic differential equation.
    \emph{El.\ J.\ Probab.} \textbf{15} (2010), 1369--1393.

\bibitem{SSV}
        Schilling, R.L.,  Song, R.,  Vondra\v{c}ek, Z.: \emph{Bernstein Functions. Theory and Applications}.
        Walter de Gruyter, Berlin  2010.


\bibitem{Schoe1}
    Schoenberg, I.J.:
    Metric spaces and positive definite functions.
    \emph{Trans.\ Am.\ Math.\ Soc.}
    \textbf{44} (1938), 522--536.

\bibitem{Schoe2}
    Schoenberg, I.J.:
    Metric spaces and completely monotone functions.
    \emph{Ann.\ Math.} \textbf{39} (1938), 811--842.

\bibitem{SH04}
        Steutel, F.W., van Harn, K.:
        \emph{Infinite Divisibility of Probability Distributions on the Real Line.}
        Marcel Dekker,  New York 2004.

\bibitem{St}
     Sturm, K.T.:
     Diffusion processes and heat kernels on metric spaces.
     \emph{Ann.\ Probab.} \textbf{26}  (1998),  1--55.

\bibitem{T39}
        Titchmarsh, E.C.:
        \emph{The theory of Functions (2nd edn)}. Oxford Univ.\ Press, USA 1976.

\bibitem{Y97}
        Yor, M.:
        \emph{Some Aspects of Brownian Motion, Part II: Some Recent Martingale Problems}.
        Lectures in Math.\ ETH Z\"{u}rich, Birkh\"{a}user, Basel 1997.

\end{thebibliography}
\end{document}